\documentclass[12pt]{amsproc}
\usepackage{amsmath}
\usepackage{amsthm,amssymb,amsfonts}
\usepackage[english]{babel}
\usepackage{paralist,url,verbatim, anysize}
\usepackage{amscd}
\usepackage[all,cmtip]{xy} 
\usepackage{mathtools}
\usepackage{leftidx}
\usepackage{graphicx}
\usepackage{euler}
\usepackage{csquotes}
\usepackage[bookmarks=false]{hyperref}
\usepackage{enumitem}
\usepackage{tikz-cd}

\theoremstyle{plain}
\newtheorem{theorem}{Theorem}[section]
\newtheorem{proposition}[theorem]{Proposition}
\newtheorem{notation}[theorem]{Notation}

\newtheorem{lemma}[theorem]{Lemma}
\newtheorem{remark}[theorem]{Remark}
\newtheorem{conjecture}[theorem]{Conjecture}

\newtheorem{definition}[theorem]{Definition}
\newtheorem{example}[theorem]{Example}
\newtheorem{question}[theorem]{Question}

\def\I{\mathcal{I}}
\def\J{\mathcal{J}}
\def\k{\Bbbk}
\def\K{\mathbb{K}}

\def\mM{\mathcal{M}}

\def\T{\mathcal{T}}

\DeclareMathOperator{\im}{im}

\DeclareMathOperator{\Spec}{Spec}
\DeclareMathOperator{\rank}{rank}

\DeclareMathOperator{\height}{height}

\makeatletter
    \def\@@and{ \ \ \ }

\let\sv@thm\@thm
\def\@thm{\let\indent\relax\sv@thm}

\renewenvironment{proof}[1][\proofname]{\par
  \pushQED{\qed}%
  \normalfont \topsep6\p@\@plus6\p@\relax
  \trivlist
  \itemindent\z@ 
  \item[\hskip\labelsep
        \scshape
    #1\@addpunct{.}]\ignorespaces
}{%
  \popQED\endtrivlist\@endpefalse
}
\def\subsection{\@startsection{subsection}{2}%
    \z@{.5\linespacing\@plus.7\linespacing}{-.5em}%
    {\normalfont\bfseries}}
\makeatother

\let\oldtimes\times
\renewcommand{\times}{\kern-1pt \oldtimes \kern-1pt}

\title{On The Determinantal Matroid}

\author[Lisa Nicklasson]{Lisa Nicklasson}
\address[Nicklasson]
{Division of Mathematics and Physics,
Mälardalen University}
\email{lisa.nicklasson@mdu.se}

\author[Manolis C. Tsakiris]{Manolis C. Tsakiris}
\address[Tsakiris]{Academy of Mathematics and Systems Science,
Chinese Academy of Sciences}
\email{manolis@amss.ac.cn}

\thanks{M.C. Tsakiris acknowledges the support of the National Key R$\&$D Program of China (2023YFA1009402) and of the CAS Project for Young Scientists in Basic Research (YSBR-
034).
}

\subjclass[2020]{14M12, 15A83, 13C40, 05B35}

\begin{document}

\begin{abstract}
We study the algebraic matroid induced by the ideal of $(r+1)$-minors of an $m \times n$ matrix of variables over a field. This is inherently connected to the bounded-rank matrix completion problem, in which the aim is to complete a partially observed $m \times n$ rank $r$ matrix. We give criteria that detect dependent sets in the matroid, we describe a family of bases of the matroid, and we study the question of unique completability.  
\end{abstract}

\maketitle

\section{Introduction} \label{section:Introduction}

The notion of a matroid, introduced independently by Whitney \cite{whitney1935abstract} and Nakasawa \cite{nakasawa1935axiomatik}-\cite{nakasawa1936axiomatik}, abstracts the idea of linear independence, and as such, is an effective combinatorial notion with a deep theory of its own and a multitude of applications within and outside mathematics. Algebraic matroids, already noted informally by van der Waerden \cite{van1937moderne}, appear naturally in field theory, with the transendence degree playing the role of the matroid rank. Algebraic matroids also naturally underpin the combinatorics of coordinate projections of affine varieties and the elimination theory of the corresponding polynomial ideals \cite{rosen2020algebraic}. In contrast to linear matroids, \emph{very little is known about algebraic matroids} \cite{dress1987some}, which \emph{seem forebodingly difficult to deal with} \cite{bollen2018algebraic}; this is essentially because understanding them is tantamount to understanding the algebraic relations that involve dependent elements, and while this is possible algorithmically for small examples, it is nevertheless a formidable theoretical task. In this paper we undertake that task for the \emph{determinantal matroid}, which is the algebraic matroid associated to the determinantal variety of $m \times n$ matrices over a field $\k$ of rank bounded above by $r < \min\{m,n\}$.


It is well-known \cite{kiraly2015algebraic,rosen2020algebraic} that the study of the determinantal matroid is of significance to the low-rank matrix completion problem, the latter being a fundamental question in the machine learning, statistics and applied mathematics communities; e.\,g.\ see \cite{candes2009exact,candes2010power,singer2010uniqueness,recht2011simpler,balcan2019non,tsakiris2023Plucker}. There, one observes an $m \times n$ matrix $X$ of rank $r<\min\{m,n\}$ over $\k$ at a subset $\Omega$ of its entries, and the objective is to complete these partial observations to a matrix of rank $r$. In other words, one applies a coordinate projection to an element of the determinantal variety and aims at obtaining an element of the fiber. There are finitely many rank-$r$ completions of $X$ if and only if the fiber of the projection over $X$ has dimension zero (thought over $\mathbb{C}$ or scheme-theoretically). The \emph{observation patterns} $\Omega$ of size equal to the dimension $r(m+n-r)$ of the determinantal variety, for which the generic fiber of the projection is zero-dimensional, are precisely the bases of the determinantal matroid; more generally, a set $\Omega$ is independent if and only if the associated coordinate projection is dominant. This connects directly with the feasibility question in matrix completion: Setting generic values at locations $\Omega$, can the rest of the matrix be completed to a matrix of rank $r$?\footnote{See also \cite{bernstein2020typical} for the related problem of completion ranks.} A more subtle question, yet of great interest for applications, is that of unique completability: When is $X$ the only element of the fiber over $X$?  

The determinantal matroid is theoretically understood only for $r=1$ \cite{singer2010uniqueness}, $r=2$ \cite{bernstein2017completion}, $r=\min\{m,n\}-3$ \cite{Larson2024} and $r=\min\{m,n\}-2, \, \min\{m,n\}-1$ \cite{Tsakiris-AMS-2023}. In \cite{Tsakiris-AMS-2023} two conditions were presented for independence, one is sufficient and the other is necessary; in \S \ref{section:Matroid-and-SLMF} we settle to the negative the open question of wether any of these is both necessary and sufficient. In \S\ref{section:General-height-criterion} we establish in great generality a criterion for dependence (Theorem \ref{thm:height}). In \S\ref{section:Detecting dependent sets} we further prove novel criteria for detecting dependent sets, with our main result being Theorem \ref{thm:dependent_Asche}. Moreover we conjecture that Theorem \ref{thm:height} detects all dependent sets (Conjecture \ref{conj:height}), and we prove this conjecture for $r=1$ and $r \ge \min\{m,n\}-2$ (Proposition \ref{prp:conjecture}). In \S \ref{section: Unique Completability} we lay down fundamental theory regarding the question of unique completability, and finally in \S \ref{section: Diagonal Bases} we discuss a family of maximal independent sets that we term \emph{diagonal bases}. 

We thank Aldo Conca for valuable discussions on the topic. We thank Chenqi Mou for stimulating interactions on Gr\"obner bases and matrix completion.

\section{Preliminaries and overview} \label{section:Background and notation}

We begin by setting our notation. Henceforth $Z$ will denote an $m \times n$ matrix of variables. For a positive integer $t$, we let $I_t(Z)$ denote the ideal of the polynomial ring $\k[Z]$ generated by all $t$-minors; here $\k$ is an infinite field and we denote its algebraic closure by $\K$. We recall that $I_t(Z)$ is a prime ideal and $$\height(I_{r+1}(Z)) = (m-r)(n-r).$$ A subset $\Omega \subseteq [m]\times [n]$ will often be depicted as a $\{0,1\}$-matrix, the \emph{incidence matrix} of $\Omega$, with a $1$ in entry $(i,j)$ indicating that $(i,j) \in \Omega$. For such an $\Omega$ we denote by $Z_\Omega=\{z_{ij} \ | \ (i, j) \in \Omega \}$ the corresponding subset of variables. We will use the short notation $E_\Omega$ for the elimination ideal $I_{r+1}(Z) \cap \k[Z_\Omega]$, and $(I_{r+1}(Z))_\Omega$ (or $I_\Omega$ more economically) for the ideal of $\k[Z_\Omega]$ generated by the $(r+1)$-minors supported on $\Omega$. Note that $I_\Omega \subseteq E_\Omega$, but the two ideals are not in general equal. By $\bar{Z}$ we denote the class of $Z$ modulo $I_{r+1}(Z)$, that is $\k[\bar{Z}] = \k[Z]/I_{r+1}(Z)$. We have a $\k$-algebra homomorphism $$\varphi_\Omega: \k[Z_\Omega] \rightarrow \k[\bar{Z}],$$ which for $(i,j) \in \Omega$ sends $z_{ij}$ to $\bar{z}_{ij}$. The kernel of $\varphi_\Omega$ is $E_\Omega$, and so we also have an inclusion of integral domains $\k[\bar{Z}_\Omega] \subseteq \k[\bar{Z}],$ where $\k[\bar{Z}_\Omega]  = \k[Z_\Omega]/ E_\Omega$, as well as a field extension $\k(\bar{Z}_\Omega) \subseteq \k(\bar{Z})$ of their fields of fractions. Now $\varphi_\Omega$ induces a projection morphism 
$$\pi_\Omega: \Spec \k[\bar{Z}] \rightarrow \Spec \k[Z_\Omega]=:\mathbb{A}^\Omega_\k.$$ 
Finally, by $(\Spec \k[\bar{Z}])_{\text{rat}}$ we indicate the set of $\k$-rational points of $\Spec \k[\bar{Z}]$; these are the $m \times n$ matrices with entries in $\k$ and of rank $\le r$. Note that $(\Spec \K[\bar{Z}])_{\text{rat}}$ coincides with the set of closed points of $\Spec \K[\bar{Z}]$. 

We recall that a \emph{matroid} is defined on a finite ground set $X$ by declaring each subset of $X$ either \emph{independent} or \emph{dependent}. Any subset of an independent set must also be independent. Moreover, if $A$ and $B$ are two independent sets such that $|A|<|B|$ then there is an $x \in B \setminus A$ such that $A \cup \{x\}$ is independent. As a consequence, all maximal independent sets must have the same cardinality, which is called the \emph{rank} of the matroid. The maximal independent sets are called \emph{bases}, as knowing the bases provides the full matroid structure. 
For fixed numbers $r, \, m, \, n$ we define a matroid $\mM(r,[m]\times [n])$ on ground set $[m] \times [n]$ by taking the subsets $\Omega \subseteq [m] \times [n]$ for which $E_\Omega = 0$ as independent. Equivalently, $\Omega$ is independent in $\mM(r,[m]\times [n])$ if $\pi_\Omega$ is dominant, or equivalently, if the elements $\bar{Z}_\Omega$ are algebraically independent over $\k$. For proofs that this is indeed a matroid, and of the said equivalences, see \cite{rosen2020algebraic}. The rank of $\mM(r,[m]\times [n])$ is 
$$\rank \mM(r,[m]\times [n]) = r(m+n-r) = \dim \Bbbk [Z]/I_{r+1}(Z) = \operatorname{tr.deg.}_\k \k(\bar{Z}).$$ 
An $\Omega$ of size $r(m+n-r)$ is a base if and only if the field extension $\k(\bar{Z}_\Omega) \subseteq \k(\bar{Z})$ is finite, or equivalently, if and only if the generic fiber of $\pi_\Omega$ is zero-dimensional. 
One may also define the  \emph{matroid rank} of any $\Omega \subseteq [m] \times [n]$ as $$\rank(\Omega) = \operatorname{tr.deg.}_\k \k(\bar{Z}_\Omega) = \dim (\im(\pi_\Omega)).$$ 

\begin{definition} \label{dfn:finitely-completable}
An $\Omega \subseteq [m] \times [n]$ is called finitely completable at rank $r$, if the generic fiber of $\pi_\Omega$ is zero-dimensional. 
\end{definition} 

\begin{remark}
If $\Omega$ is finitely completable, the minimal fiber dimension of $\pi_\Omega$ is zero; by the upper semicontinuity of the fiber dimension, this minimal value is achieved on a dense open set, and from this it follows that a generic $m \times n$ rank $r$ matrix with entries in $\k$ observed at $\Omega$ has finitely many rank $r$ completions over $\K$. In such a case, the matroid rank of $\Omega$ is the maximal possible and so $\Omega$ contains a base of $\mathcal{M}(r, [m] \times [n])$. Thus understanding the finitely completable $\Omega$'s is equivalent to understanding the bases of $\mathcal{M}(r, [m] \times [n])$.
\end{remark}

Our study of $\mathcal{M}(r, [m] \times [n])$ begins in \S \ref{section:Matroid-and-SLMF}, where we give examples proving that, in general, none of the conditions given in \cite{Tsakiris-AMS-2023} are necessary and sufficient for an $\Omega$ to be a base. Motivated by this gap, we develop in  \S \ref{section:General-height-criterion} a general criterion for dependence, valid for any algebraic matroid induced by a prime ideal in a polynomial ring. In \S \ref{section:Detecting dependent sets} we establish further criteria for detecting dependent sets in $\mathcal{M}(r, [m] \times [n])$, while in \S \ref{section: Diagonal Bases} we present a special family of \emph{diagonal} and \emph{anti-diagonal} bases of $\mathcal{M}(r, [m] \times [n])$. In \S \ref{section: Unique Completability} we are concerned with the following more subtle notion.

\begin{definition} \label{dfn:uniquely-completable}
An $\Omega \subseteq [m] \times [n]$ is called uniquely completable at rank $r$, if the generic fiber of $\pi_\Omega$ consists of a single point (scheme-theoretically). 
\end{definition} 

Among others, we will see in \S \ref{section: Unique Completability} that the scheme-theoretic notion of unique completability in Definition \ref{dfn:uniquely-completable} is in fact equivalent to the one expected from a matrix completion point of view, which may be formulated as: 

\begin{definition} \label{dfn:generically-uniquely-completable-k}
$\Omega$ is called generically uniquely completable at rank $r$ over $\k$ if there exist polynomials in $\k[Z] \setminus I_{r+1}(Z)$, such that if $X=(x_{ij}) $ is an $m \times n$ matrix with entries in $\k$ of rank at most $r$ and $X$ is not a simultaneous root of these polynomials, then $X = Y$ for any other matrix $Y=(y_{ij}) \in \K^{m \times n}$ that satisfies $y_{ij} = x_{ij}, \, \forall (i,j) \in \Omega$ and $\rank(Y) \le r$.
\end{definition}

\section{The determinantal matroid and supports of linkage matching fields} \label{section:Matroid-and-SLMF}

\emph{Supports of Linkage Matching Fields} (SLMF's for short) is an algebraic-combinatorial notion introduced in \cite{sturmfels1993maximal} along the effort of proving that the maximal minors of a matrix of variables is a universal Gr\"obner basis. It was generalized in \cite{Tsakiris-AMS-2023} to the notion of \emph{relaxed-SLMF} and used in the study of the matroid $\mathcal{M}(r, [m] \times [n])$.

\begin{definition}[\cite{Tsakiris-AMS-2023}] \label{definition:SLMF}
A subset $\Omega \subseteq [m] \times [n]$ is a \emph{relaxed $(\nu,r,m)$-SLMF} if for all $\I \subseteq [m]$, with $|\I|>r$, 
\begin{equation}\label{eq:rrmSLMF}
\sum_{j=1}^n \max(|\Omega \cap (\I \times \{j\})|-r,0) \le \nu(|\I|-r) \quad \text{with equality for} \ \I=[m].
\end{equation}
\end{definition}

The following sufficient condition was proved for $\Omega$ to be a base.

\begin{theorem}[\cite{Tsakiris-AMS-2023}] \label{theorem:SLMF-sufficient}
If $|\Omega| = r(m+n-r)$ and there is a partition $[n] = \bigcup_{\ell= 1}^r \J_\ell$ such that each $\Omega \cap ([m]\times \J_\ell)$ is a relaxed $(1,r,m)$-SLMF, then $\Omega$ is a base of the matroid $\mathcal{M}(r, [m] \times [n])$. 
\end{theorem}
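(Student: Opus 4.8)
The plan is to prove that $\bar{Z}_\Omega$ is algebraically independent over $\k$; since $|\Omega|=r(m+n-r)=\rank\mathcal{M}(r,[m]\times[n])$, this is precisely the statement that $\Omega$ is a base. I would use the block parametrization of the determinantal variety: a rank-$\le r$ matrix whose leading $r\times r$ minor is nonzero has the shape $X=\left(\begin{smallmatrix}G\\ CG\end{smallmatrix}\right)$, with $G$ an $r\times n$ and $C$ an $(m-r)\times r$ matrix, and $(C,G)\mapsto X$ is a morphism $\mathbb{A}^{(m-r)r+rn}_\k\to\Spec\k[\bar{Z}]$ that is generically injective with dense image, so that $\k(\bar{Z})\cong\k(C,G)$. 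Hence $\bar{Z}_\Omega$ is algebraically independent over $\k$ as soon as the $|\Omega|$ polynomials $X_{ij}$ ($(i,j)\in\Omega$) in the $|\Omega|=(m-r)r+rn$ variables $C,G$ are algebraically independent, and for that it suffices that their Jacobian $J=\bigl(\partial X_{ij}/\partial t\bigr)_{(i,j)\in\Omega,\,t}$ — a square matrix over $\k[C,G]$, with $t$ ranging over the entries of $C$ and $G$ — satisfies $\det J\not\equiv 0$. (Only this implication is needed, and it holds in arbitrary characteristic.)

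I would then make $\det J$ explicit. Writing $\Omega^{\mathrm{top}}=\Omega\cap([r]\times[n])$ and $\Omega^{\mathrm{bot}}=\Omega\setminus\Omega^{\mathrm{top}}$, the identities $X_{ij}=g_{ij}$ for $i\le r$ show that the rows of $J$ indexed by $\Omega^{\mathrm{top}}$ are distinct unit vectors; Laplace expansion along them reduces $\det J$, up to sign, to $\det M$, where $M$ has rows indexed by $\Omega^{\mathrm{bot}}$ and columns indexed by the entries $c_{i'k}$ of $C$ together with the $g_{kj}$ having $(k,j)\notin\Omega^{\mathrm{top}}$, and the $(i,j)$-row of $M$ ($i>r$) has the entry $g_{kj}$ in the $c_{i-r,k}$-column and the entry $c_{i-r,k}$ in the $g_{kj}$-column (for $k\in[r]$, whenever these are columns of $M$), and is zero elsewhere. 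The partition $[n]=\bigcup_{\ell=1}^{r}\J_\ell$ gives $M$ a bordered block shape, since a row with column index in $\J_\ell$ meets only the $C$-columns and the columns $g_{kj}$ with $j\in\J_\ell$; a generalized Laplace expansion over these column blocks then writes $\det M$ as a signed sum, over tuples $(S_\ell)_{\ell=1}^{r}$ with $S_\ell\subseteq\Omega^{\mathrm{bot}}\cap([m]\times\J_\ell)$ and $|S_\ell|=m-r$, of the products
\[
\pm\det\!\bigl(M_C\bigl[\textstyle\bigcup_\ell S_\ell\bigr]\bigr)\cdot\prod_{\ell=1}^{r}\det\!\bigl(M_{G_\ell}\bigl[(\Omega^{\mathrm{bot}}\cap([m]\times\J_\ell))\setminus S_\ell\bigr]\bigr),
\]
where $M_C[\,\cdot\,]$ is the square submatrix of $M$ on the $C$-columns (entries among the $g_{kj}$) and $M_{G_\ell}[\,\cdot\,]$ the square submatrix on the $\J_\ell$-block of $G$-columns (entries among the $c_{i'k}$).

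This is where the hypothesis enters. Applying Definition~\ref{definition:SLMF} with $\I=[m]$ to each $\Omega_\ell:=\Omega\cap([m]\times\J_\ell)$, and using $|\Omega|=r(m+n-r)$, one first gets that every column of $\Omega$ contains at least $r$ entries and that $|\Omega_\ell|=r|\J_\ell|+(m-r)$. A short count then shows that $M_C[\,\cdot\,]$ is block diagonal over $i'\in[m-r]$ and each $M_{G_\ell}[\,\cdot\,]$ is block diagonal over $j\in\J_\ell$, with all blocks filled by pairwise distinct variables; hence a term in the sum above is a nonzero polynomial exactly when $(S_\ell)_\ell$ is chosen so that every block is square, i.e. so that $\bigcup_\ell S_\ell$ meets each bottom row in exactly $r$ entries and each column $j$ in exactly $|\Omega\cap([m]\times\{j\})|-r$ entries — and the SLMF inequalities \eqref{eq:rrmSLMF} for arbitrary $\I\subseteq[m]$ are precisely the Hall-type (deficiency) condition guaranteeing that such a choice exists. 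The remaining, and main, obstacle is to rule out cancellation among the terms attached to the different valid tuples $(S_\ell)_\ell$. For this I would follow the strategy of \cite{sturmfels1993maximal} for supports of linkage matching fields: the relaxed $(1,r,m)$-SLMF structure of the $\Omega_\ell$ should be used to construct a weight vector $\omega$ on the variables $z_{ij}$ (equivalently on $C$ and $G$) for which exactly one of the terms above has maximal $\omega$-weight, so that $\operatorname{in}_\omega(\det M)$ is a single monomial and hence $\det M\not\equiv 0$; this "coherence/linkage" step, where the SLMF inequalities over all $\I\subseteq[m]$ are indispensable, is the crux, and once it is in place $\det J\not\equiv 0$ and $\Omega$ is a base.
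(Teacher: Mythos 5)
This theorem is quoted in the paper from \cite{Tsakiris-AMS-2023} without proof, so there is no in-paper argument to compare against; judged on its own terms, your proposal is a sensible framework but not yet a proof. The reduction is fine: parametrizing the determinantal variety by $(C,G)\mapsto\binom{G}{CG}$, identifying $\k(\bar Z)$ with $\k(C,G)$, and reducing the algebraic independence of $\bar Z_\Omega$ to $\det J\not\equiv 0$ is correct (and your parenthetical about characteristic is right, since you only use the implication that a generically full-rank Jacobian forces algebraic independence). The Laplace reduction to $\det M$, the bordered block structure induced by the partition $[n]=\bigcup_\ell\J_\ell$, and the count giving $|\Omega_\ell|=r|\J_\ell|+(m-r)$ and at least $r$ observed entries per column are all sound.

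The gap is exactly where you say it is, and it is not closed. The terms of your block-Laplace expansion are signed products of $r$-minors of $G$ and of minors of $C$ --- not monomials --- so distinct admissible tuples $(S_\ell)_\ell$ can in principle cancel, and the whole theorem hinges on ruling this out. Your plan is to ``construct a weight vector for which exactly one term has maximal weight,'' but that is precisely the coherence property of a matching field, whereas the hypothesis only supplies the \emph{support} condition (relaxed $(1,r,m)$-SLMF); you neither construct such a weight vector nor argue that one exists for an arbitrary SLMF, and extracting a single extremal term from the support data alone is essentially the entire content of the theorem. A second, smaller, unproved assertion is the existence of an admissible transversal $S=\bigcup_\ell S_\ell$ with the prescribed row and column sums: you claim the inequalities \eqref{eq:rrmSLMF} over all $\I\subseteq[m]$ ``are precisely'' the relevant defect-Hall condition, but the required Gale--Ryser/max-flow argument mixes the per-$\ell$ conditions with a global constraint on the bottom rows and needs to be written out. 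As it stands, the decisive step is deferred to the strategy of \cite{sturmfels1993maximal} without verifying that it applies.
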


A necessary condition was also given.

\begin{theorem} [\cite{Tsakiris-AMS-2023}] \label{theorem:SLMF-necessary}
Every base of $\mathcal{M}(r, [m] \times [n])$ is a relaxed $(r,r,m)$-SLMF.
\end{theorem}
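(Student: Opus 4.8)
The plan is to deduce both clauses of \eqref{eq:rrmSLMF} from the rank formula $\rank\mathcal{M}(r,[m]\times[n])=r(m+n-r)$ together with the behaviour of the determinantal matroid under coordinate restriction. The structural input I would use is: for $\I\subseteq[m]$ and $\J\subseteq[n]$ with $|\I|,|\J|>r$, the restriction of $\mathcal{M}(r,[m]\times[n])$ to the ground subset $\I\times\J$ equals $\mathcal{M}(r,\I\times\J)$. This is because the restriction of the algebraic matroid of an irreducible variety to a coordinate subset is the algebraic matroid of the Zariski closure of the corresponding coordinate projection (\cite{rosen2020algebraic}), and for the determinantal variety that closure is again a determinantal variety: the projection of $\{\,m\times n,\ \rank\le r\,\}$ onto $\I\times\J$ is surjective onto $\{\,\I\times\J,\ \rank\le r\,\}$, since padding a rank-$\le r$ matrix on $\I\times\J$ with zero rows and columns leaves the rank $\le r$. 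Consequently any subset of a base $\Omega$ contained in $\I\times\J$ is independent in $\mathcal{M}(r,\I\times\J)$, hence has at most $r(|\I|+|\J|-r)$ elements.

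For the inequality, fix $\I\subseteq[m]$ with $s:=|\I|>r$, set $c_j:=|\Omega\cap(\I\times\{j\})|$, and let $\J_+:=\{\,j\in[n]:c_j>r\,\}$, so that the left-hand side of \eqref{eq:rrmSLMF} equals $\sum_{j\in\J_+}(c_j-r)$. Since $c_j\le|\I|=s$ for every $j$, if $|\J_+|\le r$ then $\sum_{j\in\J_+}(c_j-r)\le|\J_+|(s-r)\le r(s-r)$ (the empty case $\J_+=\emptyset$ included). If instead $|\J_+|>r$, then applying the cardinality bound above with $\J=\J_+$ to the independent set $\Omega\cap(\I\times\J_+)$ gives
\[
\sum_{j\in\J_+}(c_j-r)=\bigl|\Omega\cap(\I\times\J_+)\bigr|-r|\J_+|\ \le\ r\bigl(s+|\J_+|-r\bigr)-r|\J_+|=r(s-r),
\]
which is the claimed bound.

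For the equality at $\I=[m]$, it is enough to show that every column meets a base in at least $r$ cells, for then $c_j-r\ge 0$ for all $j$ and the left-hand side of \eqref{eq:rrmSLMF} becomes $\sum_{j=1}^{n}(c_j-r)=|\Omega|-rn=r(m+n-r)-rn=r(m-r)=r(|\I|-r)$. Suppose some column $j_0$ had $c:=c_{j_0}\le r-1$, and pick a generic $\k$-point $X$ of the determinantal variety. Its columns $X^j$ with $j\ne j_0$ span the $r$-dimensional column space $W$ of $X$ (a generic rank-$\le r$ matrix with one column removed still has rank $r$, since $n-1\ge r$). For any $v\in W$ with $v_i=0$ whenever $(i,j_0)\in\Omega$, replacing $X^{j_0}$ by $X^{j_0}+v$ produces a matrix of rank $\le r$ agreeing with $X$ on $\Omega$; the admissible $v$ form a linear subspace of $W$ of dimension at least $r-c\ge 1$. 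Hence a generic rank-$\le r$ matrix observed at $\Omega$ admits a positive-dimensional family of rank-$\le r$ completions, so $\Omega$ is not finitely completable, contradicting that it is a base.

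Apart from this last step, everything is bookkeeping with the matroid rank formula and the restriction principle, both standard; the only point requiring a genuine (if short) geometric argument — and the one I expect to be the crux — is showing that a column with fewer than $r$ observed cells forces a positive-dimensional fiber.
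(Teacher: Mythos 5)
Your proof is correct, and it is necessarily a ``different route'' in the trivial sense that the paper offers no proof at all: Theorem \ref{theorem:SLMF-necessary} is imported as a black box from \cite{Tsakiris-AMS-2023}, so your argument supplies something the present paper does not contain. All three ingredients check out. (a) The restriction principle is sound: zero-padding shows the coordinate projection of the determinantal variety onto $\I\times\J$ is surjective onto the smaller determinantal variety, so the restricted matroid is $\mathcal{M}(r,\I\times\J)$ with rank $r(|\I|+|\J|-r)$ --- this is the same fact the paper uses in Lemma \ref{lemma:size_dep}. (b) The two-case count for the inequality is airtight: when $|\J_+|\le r$ you only need $c_j\le|\I|$, and when $|\J_+|>r$ you only need the rank bound applied to the independent set $\Omega\cap(\I\times\J_+)$. (c) The equality clause correctly reduces to every column of a base meeting $\Omega$ in at least $r$ cells; this is exactly the content of Lemma 10 of \cite{Tsakiris-AMS-2023}, which the paper itself invokes in Proposition \ref{prp:<r}, and your perturbation argument (adding to the deficient column a vector of the column space vanishing on the $\le r-1$ observed positions) is the standard proof of it. The only step left tacit there is the passage from ``positive-dimensional fiber at a dense set of $\k$-rational points'' to ``positive-dimensional generic fiber'': this needs the $\k$-points you perturb to be Zariski dense (true, since $\k$ is infinite and the determinantal variety is the image of the multiplication map on matrix pairs) together with upper semicontinuity of fiber dimension. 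With that remark made explicit, the proof is complete and self-contained.
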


By \cite[Lemma 7]{Tsakiris-AMS-2023} it is sufficient to consider bases for which each row and column in the incidence matrix has at least $r+1$ ones, see also Lemma \ref{lemma:indep_reduce}.
Under this harmless assumption, the two conditions were proved to be equivalent in the extreme cases $r=1, \, \min\{m,n\}-2, \, \min\{m,n\}-1$, with the question of overall equivalence left open. Here we give examples which show that, under the same assumption, neither condition characterizes the matroid.  

\begin{example}\label{ex:not-1rm-SLMF}
 Let $r=4$ and consider $\Omega$ given by the $9 \times 9$ incidence matrix 
 \[
 \begin{bmatrix}
     0 & 0 & 0 & 1 & 1 & 1 & 1 & 1 & 1 \\
     0 & 0 & 1 & 1 & 0 & 1 & 1 & 1 & 1 \\
     0 & 1 & 1 & 0 & 1 & 1 & 0 & 1 & 1 \\
     1 & 1 & 0 & 0 & 1 & 0 & 1 & 1 & 1 \\
     1 & 0 & 1 & 1 & 1 & 0 & 1 & 0 & 1 \\
     1 & 1 & 1 & 0 & 0 & 1 & 1 & 0 & 1 \\
     1 & 1 & 0 & 1 & 1 & 1 & 0 & 0 & 1 \\
     1 & 1 & 1 & 1 & 0 & 0 & 0 & 1 & 1 \\
     1 & 1 & 1 & 1 & 1 & 1 & 1 & 1 & 0
 \end{bmatrix}.
\]
This is an example of an ``anti-diagonal'' base, which we will see in Theorem \ref{thm:anti-diagonal}. Now we shall observe that this set $\Omega$ does not satisfy the condition in Theorem \ref{theorem:SLMF-sufficient}.  Take a partition $[9]=\J_1 \cup \J_2 \cup \J_3 \cup \J_4$, and consider $\I=[9]$ in \eqref{eq:rrmSLMF} for $\Omega \cap ([9] \times \J_1)$. As every column has either six or eight elements, the terms in left hand side of \eqref{eq:rrmSLMF} can be either 2 or 4, so the sum is an even number. The right hand side is $9-4=5$, an odd number. Hence the equality can not hold, and we can not partition $\Omega$ into $(1,4,9)$-SLMF's in the way Theorem \ref{theorem:SLMF-sufficient} requires. 
\end{example}

\begin{example} \label{ex:SLMF-but-not-base}
Let $m=n=5$ and $r=2$, and 
consider 
\[
\Omega = \begin{pmatrix}
1 & 1 & 1 & 0 & 0   \\
1 & 1 & 1 & 0 & 0   \\
1 & 1 & 0 & 1 & 1   \\
0 & 0 & 1 & 1 & 1 \\
0 & 0 & 1 & 1 & 1
\end{pmatrix}.
\] Then a simple exhaustive by hand calculation shows that $\Omega$ is a relaxed $(2,2,5)$-SLMF, i.e. it satisfies the condition of Theorem \ref{theorem:SLMF-necessary}; in particular $|\Omega| = 16 = \rank \mathcal{M}(2,[5] \times [5])$. 
On the other hand, $\Omega$ is a dependent set of the matroid $\mathcal{M}(2,[5] \times [5])$. To see this, let $f$ and $g$ be the 3-minors $[123|123]$ and $[345|345]$. Notice that 
\[ 
f=f_1z_{33}+f_2 \quad \text{and} \quad g=g_1z_{33}+g_2 \quad \text{where} \quad f_1, f_2, g_1, g_2 \in \k[\Omega]. 
\]
Then 
\[
g_1f-f_1g= g_1f_2-f_1g_2
\]
is a non-zero element of the elimination ideal $E_{\Omega}$.
\end{example}

\section{A general height criterion for dependence}\label{section:General-height-criterion}

Quite generally, if $Z=\{z_1, \ldots, z_N\}$ is a set of $N$ variables, any prime ideal $I$ of the polynomial ring $\k[Z]$ defines an algebraic matroid as in the previous section, i.e. by declaring $\T \subseteq [N]$ to be independent if $E_\T = I \cap \k[Z_\T] = 0$. 
If the ideal $I$ is not prime, then the above matroid structure may not be present; instead, the sets $\T$ for which $E_\T=0$ define a simplicial complex on $[N]$ known as the \emph{independence complex} of $I$; e.g. see \cite{kalkbrener1995initial}. A fascinating result, proved  independently by different methods in \cite{varbaro2011symbolic} and \cite{minh-trung-2011}, asserts that the independence complex of a square-free monomial ideal is a matroid if and only if all symbolic powers of the ideal are Cohen-Macaulay. In this level of generality, we have the following \emph{dependence} criterion. 

\begin{theorem} \label{thm:height}
Let $I=(p_1, \ldots, p_s)$ be a proper ideal of $\k[Z]$, and for any $\T \subseteq [N]$ let $I_\T$ denote the ideal generated by the subset $\{p_1, \ldots, p_s\} \cap \k[Z_\T].$
An $\Omega \subseteq [N]$ is not in the independence complex of $I$ if there is a set $\T \subseteq [N]$ for which 
\begin{equation*}
\height(I_\T) > |\T \setminus \Omega|.
\end{equation*}   
\end{theorem}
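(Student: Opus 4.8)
The plan is to show that if $\Omega$ lies in the independence complex of $I$, i.e.\ $E_\Omega = I \cap \k[Z_\Omega] = 0$, then for every $\T \subseteq [N]$ we must have $\height(I_\T) \le |\T \setminus \Omega|$; this is the contrapositive of the statement. Fix such an $\Omega$ and an arbitrary $\T$. The key observation is that $I_\T \subseteq \k[Z_\T]$ is generated by those among $p_1,\dots,p_s$ that involve only variables indexed by $\T$, so in particular every generator of $I_\T$ lies in $I$; hence $I_\T \subseteq I \cap \k[Z_\T]$, and more importantly $I_\T \subseteq I$ as ideals of $\k[Z]$ after extension.

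First I would pass to the variables $Z_{\T \setminus \Omega}$ and $Z_{\T \cap \Omega}$ and consider the specialization (elimination) that sets, or rather eliminates, the variables in $Z_{\T \setminus \Omega}$. Concretely, write $Z_\T = Z_{\T \cap \Omega} \sqcup Z_{\T \setminus \Omega}$ and view $\k[Z_\T] = \k[Z_{\T \cap \Omega}][Z_{\T \setminus \Omega}]$. Since $E_\Omega = 0$, the variables $\bar Z_\Omega$ are algebraically independent over $\k$ in $\k[\bar Z]$; restricting to the subset $\T \cap \Omega$, the variables $\bar Z_{\T \cap \Omega}$ are algebraically independent as well. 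Now $I_\T$ is an ideal of $\k[Z_\T]$, a polynomial ring in $|\T|$ variables, of which $|\T \cap \Omega|$ of them — namely those in $Z_{\T \cap \Omega}$ — remain algebraically independent modulo $I_\T$ (because they remain so modulo the larger ideal $I \cap \k[Z_\T] \supseteq I_\T$, as $E_\Omega=0$ forces $I \cap \k[Z_{\T\cap\Omega}] = 0$ and a fortiori $I_\T \cap \k[Z_{\T \cap \Omega}] = 0$). Therefore $\dim \k[Z_\T]/I_\T \ge |\T \cap \Omega|$, and since $\k[Z_\T]/I_\T$ is a quotient of a polynomial ring in $|\T|$ variables,
\[
\height(I_\T) \;=\; |\T| - \dim\bigl(\k[Z_\T]/I_\T\bigr) \;\le\; |\T| - |\T \cap \Omega| \;=\; |\T \setminus \Omega|.
\]
Here I use that $\k[Z_\T]/I_\T$ is equidimensional enough for $\height(I_\T) + \dim(\k[Z_\T]/I_\T) = |\T|$ to hold — this is automatic when $I_\T$ has a minimal prime of that height, so the safe statement is $\height(I_\T) \le |\T| - \dim(\k[Z_\T]/I_\T)$, which is all we need.

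The main obstacle, and the step requiring care, is the claim that $I_\T \cap \k[Z_{\T \cap \Omega}] = 0$, equivalently that the composite $\k[Z_{\T \cap \Omega}] \hookrightarrow \k[Z_\T] \twoheadrightarrow \k[Z_\T]/I_\T$ is injective; this is what pins down the lower bound on $\dim \k[Z_\T]/I_\T$. It follows because $I_\T \subseteq I \cap \k[Z_\T]$, hence $I_\T \cap \k[Z_{\T \cap \Omega}] \subseteq I \cap \k[Z_{\T \cap \Omega}] = E_{\T \cap \Omega}$, and $E_{\T \cap \Omega} = 0$ since $\T \cap \Omega \subseteq \Omega$ and $E_\Omega = 0$ (any subset of an independent set is independent). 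Once this is in place the dimension count is routine. One should also note the hypotheses are used minimally: $I$ need not be prime here, only proper, so that $\Omega$ being in the independence complex still propagates to subsets, which is exactly the monotonicity property of independence complexes recalled in the excerpt.
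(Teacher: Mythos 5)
Your proof is correct. It establishes the contrapositive of the statement, but it rests on the same key point as the paper's proof: if $E_\Omega = 0$ then $I_\T \cap \k[Z_{\T\cap\Omega}] \subseteq I \cap \k[Z_{\T\cap\Omega}] = 0$, followed by a dimension count relative to the splitting $Z_\T = Z_{\T\cap\Omega} \sqcup Z_{\T\setminus\Omega}$. The execution differs, though. The paper argues in the forward direction: it localizes $\k[Z_\T]$ at $S=\k[Z_{\T\cap\Omega}]\setminus\{0\}$, notes that the height hypothesis would make $\height$ of the extended ideal exceed the dimension $|\T\setminus\Omega|$ of $\k(Z_{\T\cap\Omega})[Z_{\T\setminus\Omega}]$ unless $I_\T$ becomes the unit ideal there, and then extracts from the identity $1=f/g$ a nonzero element of $I_\T\cap\k[Z_{\T\cap\Omega}]\subseteq E_\Omega$. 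You instead use the vanishing of $I_\T \cap \k[Z_{\T\cap\Omega}]$ to embed the polynomial ring $\k[Z_{\T\cap\Omega}]$ into $\k[Z_\T]/I_\T$, deduce $\dim \k[Z_\T]/I_\T \ge |\T\cap\Omega|$, and conclude $\height(I_\T) \le |\T| - \dim(\k[Z_\T]/I_\T) \le |\T\setminus\Omega|$. Your route trades the height-under-localization lemma for two standard facts of dimension theory in polynomial rings: that a finitely generated $\k$-algebra containing $d$ algebraically independent elements has dimension at least $d$ (for the possibly non-reduced, non-domain quotient $\k[Z_\T]/I_\T$ this needs the small observation that the polynomial subring injects into $B/\mathfrak{q}$ for some minimal prime $\mathfrak{q}$, since a domain cannot meet every minimal prime nontrivially when their intersection is the nilradical), and the relation $\height(J) = |\T| - \dim(\k[Z_\T]/J)$, of which you correctly use only the $\le$ direction; both steps are sound. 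Each version is clean: the paper's avoids invoking the dimension formula for arbitrary (non-prime) ideals, while yours avoids localization entirely.
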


\begin{proof} Given $\Omega$ suppose there is a $\T$ for which
$\height(I_\T) > |\T \setminus \Omega|$.  Let $S$ be the multiplicatively closed subset $\k[Z_{\T \cap \Omega}]\setminus \{0\}$ of the polynomial ring $\k[Z_{\T}]$. Localizing $\k[Z_{\T}]=\k[Z_{\T \cap \Omega}][Z_{\T \setminus \Omega}]$ at $S$, we obtain the ring $\k[Z_{\T}]_S = \k(Z_{\T \cap \Omega})[Z_{\T \setminus \Omega}]$, which is a polynomial ring of dimension $|\T \setminus \Omega|$ over the field $\k(Z_{\T \cap \Omega})$ of rational functions. 

Quite generally, if $J$ is a proper ideal of a commutative ring $R$ and $T$ a multiplicatively closed set of $R$, then $\height_{R_T} (J R_T) \ge \height_R(J)$, provided that $J R_T$ is a proper ideal of $R_T$. 
Consequently, if $I_\T \k[Z_{\T}]_S$ is a proper ideal of $\k[Z_{\T}]_S$, the hypothesis on $\T$ gives 
$$ \height_{\k[Z_{\T}]_S} (I_\T \k[Z_{\T}]_S) \ge \height_{\k[Z_{\T}]}(I_\T) >  |\T \setminus \Omega| = \dim \k[Z_{\T}]_S,$$ 
which is a contradiction. Therefore it must be that $I_\T \k[Z_{\T}]_S = \k[Z_{\T}]_S$. This implies that the identity element of $\k[Z_{\T}]_S$ can be written as a quotient $f/g$, with $f \in I_\T \k[Z_{\T}]$ and $g \in \k[Z_{\T \cap \Omega}] \setminus \{0\}$. Then $f$ is a non-zero polynomial in $I_\T \cap \k[Z_{\T \cap \Omega}] \subseteq I \cap \k[Z_{\Omega}]$, and so $E_\Omega \neq 0$; i.e. $\Omega$ is not in the independence complex of $I$.
\end{proof}

\begin{remark}
The matroid defined by a prime ideal $I$ is invariant of the chosen generating set for $I$, but the dependent sets detected by Theorem \ref{thm:height} do depend on the choice of generators. For example, take $I=(z_1z_2+z_3, \, z_3-z_4)$ and $\Omega=\{z_3,\, z_4\}$. Here $\Omega$ is a dependent set of the matroid, as $z_3-z_4 \in I$, and $\height(I_\T)=1 > 0=|\T\setminus \Omega|$ for $T=\Omega$.  On the other hand, we can take the generating set $\{z_1z_2+z_3, \, z_1z_2+z_4\}$ for the same ideal $I$. With these generators $\height(I_\T) \le |\T\setminus \Omega|$ for any $\T \subseteq \{z_1, z_2, z_3, z_4\}$, so Theorem \ref{thm:height} does not detect $\Omega$ as a dependent set. This also shows that the converse of Theorem \ref{thm:height} does not hold in general. 
\end{remark}

\section{Detecting dependent sets} \label{section:Detecting dependent sets}

We now return to the study of the \emph{Determinantal Matroid} introduced in \S \ref{section:Background and notation}. In particular, in this section we discuss ways of detecting dependent sets in the determinantal matroid. As a first result, Theorem \ref{thm:height} specialized to the determinantal ideal yields the following.

\begin{theorem} \label{thm:height-determinantal}
An $\Omega \subseteq [m] \times [n]$ is dependent in the matroid $\mM(r,[m]\times [n])$, if there is a set $\T \subseteq [m] \times [n]$ for which 
\begin{equation*}
\height(I_\T) > |\T \setminus \Omega|.
\end{equation*}   
\end{theorem}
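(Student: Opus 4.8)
The plan is to obtain Theorem~\ref{thm:height-determinantal} as a direct specialization of the general criterion in Theorem~\ref{thm:height}. To do this I first need to match the setup: take $N = mn$, let the ambient polynomial ring be $\k[Z]$ in the $mn$ variables $z_{ij}$, and take $I = I_{r+1}(Z)$ with the specific generating set $\{p_1,\ldots,p_s\}$ consisting of all $(r+1)$-minors of $Z$. Since $I_{r+1}(Z)$ is a prime ideal (as recalled in \S\ref{section:Background and notation}), its independence complex is precisely the matroid $\mM(r,[m]\times[n])$, so ``$\Omega$ is not in the independence complex of $I$'' and ``$\Omega$ is dependent in $\mM(r,[m]\times[n])$'' are the same statement. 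This identification is the only conceptual point, and it is immediate from the definitions given earlier.

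Next I would check that the ideal $I_\T$ appearing in Theorem~\ref{thm:height} coincides, for this generating set, with the ideal $I_\T = (I_{r+1}(Z))_\T$ introduced in the preliminaries, namely the ideal of $\k[Z_\T]$ generated by the $(r+1)$-minors supported on $\T$. Indeed, the generators of $I$ are the $(r+1)$-minors, and a given $(r+1)$-minor lies in $\k[Z_\T]$ if and only if all of its entries are indexed by pairs in $\T$, i.e.\ if and only if it is supported on $\T$. Hence $\{p_1,\ldots,p_s\}\cap\k[Z_\T]$ is exactly the set of $(r+1)$-minors supported on $\T$, and the ideal it generates is $I_\T$ in the paper's notation. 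So the hypothesis $\height(I_\T) > |\T\setminus\Omega|$ has the intended meaning.

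With these two identifications in place, the proof is one line: apply Theorem~\ref{thm:height} with $I = I_{r+1}(Z)$ and the generating set of all $(r+1)$-minors; if some $\T\subseteq[m]\times[n]$ satisfies $\height(I_\T) > |\T\setminus\Omega|$, then $\Omega$ is not in the independence complex of $I_{r+1}(Z)$, which is to say $\Omega$ is dependent in $\mM(r,[m]\times[n])$. I do not expect any genuine obstacle here; the only thing to be careful about is the (harmless) dependence of the criterion on the chosen generating set, flagged in the remark after Theorem~\ref{thm:height} --- here we always use the natural generators, the $(r+1)$-minors, so $I_\T$ is unambiguous and the statement is as strong as the general theorem allows for this choice. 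One may optionally remark that, since $\height(I_{r+1}(Z)) = (m-r)(n-r)$ and $I \subseteq E$ generally forces nothing stronger, the interesting applications come from well-chosen proper subsets $\T$, but that is commentary rather than part of the proof.
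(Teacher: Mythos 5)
Your proposal is correct and is exactly how the paper obtains this result: Theorem~\ref{thm:height-determinantal} is stated as the direct specialization of Theorem~\ref{thm:height} to $I=I_{r+1}(Z)$ with the $(r+1)$-minors as generators, so that the independence complex of the prime ideal $I_{r+1}(Z)$ is the matroid $\mM(r,[m]\times[n])$ and $I_\T$ agrees with the paper's notation for the ideal generated by the minors supported on $\T$. The two identifications you verify are precisely the (unwritten) content of the paper's one-line deduction.
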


\begin{example}
Consider again the $\Omega$ of Example \ref{ex:SLMF-but-not-base}.
We can apply Theorem \ref{thm:height-determinantal} with $\T=(\{1,2,3\} \times \{1,2,3\})\cup (\{3,4,5\} \times \{3,4,5\})$ to see that $\Omega$ is dependent. The ideal  $I_\T$ is generated by two minors, and as minors are irreducible polynomials, the two minors form a regular sequence. Hence $\height(I_\T)=2$ while $|\T \setminus \Omega|=1$. 
\end{example}

The following result from matroid theory provides a tool for detecting new dependent sets, given some known dependent sets of the matroid. 

\begin{theorem}[{\cite[Theorem 3]{Rado}, \cite[Theorem 4]{Asche}}] \label{thm:Asche}
Let $A_1, \ldots, A_s$ be dependent sets of a matroid on ground set $X$, such that $A_i \cap (\bigcup_{j<i}A_j)$ are independent for $i=2, \ldots, s$. Then for any subset $B \subseteq X$ with $|B|<s$ the set $\big(\bigcup_{i=1}^s A_i\big) \setminus B$ is dependent. 
\end{theorem}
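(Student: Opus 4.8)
The plan is to argue by contradiction via the exchange/augmentation axiom, exploiting the hypothesis that the sets $A_i$ are ``almost disjoint'' in the sense that each new $A_i$ meets the union of the previous ones in an independent set. First I would handle the structural consequence of this hypothesis: for each $i \ge 2$ the set $A_i \cap (\bigcup_{j<i} A_j)$ is independent, so it can be extended to a basis of $A_i$; since $A_i$ itself is dependent, this means there is at least one element $x_i \in A_i \setminus (\bigcup_{j<i} A_j)$ that is ``redundant'' relative to a spanning independent subset of $A_i$. More precisely, I would pick, for each $i$, a maximal independent subset (a basis of the restriction) $C_i \subseteq A_i$ with the property that $C_i \supseteq A_i \cap (\bigcup_{j<i} A_j)$; such a $C_i$ exists because the latter set is independent. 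Then $x_i$ can be taken to be any element of $A_i \setminus C_i$, and crucially $x_i \notin \bigcup_{j<i} A_j$, so the chosen elements $x_1, \dots, x_s$ are distinct (here $x_1$ is just any element witnessing dependence of $A_1$, i.e.\ $x_1 \in A_1 \setminus C_1$ with $C_1$ a basis of $A_1$).

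Next I would show that $\big(\bigcup_{i=1}^s A_i\big) \setminus \{x_1, \dots, x_s\}$ is still dependent — in fact I expect it still spans $\bigcup_i A_i$, equivalently $\rank\big(\bigcup_i A_i \setminus \{x_1,\dots,x_s\}\big) = \rank\big(\bigcup_i A_i\big)$, while its cardinality has dropped by $s$, forcing dependence provided the rank did not also drop by $s$. The key point is that each $x_i$ is spanned by $C_i \setminus \{x_i\}' \subseteq \big(\bigcup_j A_j\big) \setminus \{x_1, \dots, x_s\}$; one has to be a little careful because the $C_i$ could contain some $x_j$ for $j < i$, but by construction $C_i \supseteq A_i \cap (\bigcup_{j<i}A_j)$ and $x_j \notin A_i$ for $j>i$ while $x_i \notin C_i$, so the only possible overlap is with earlier $x_j$'s lying in $A_i \cap (\bigcup_{j<i}A_j) \subseteq C_i$. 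To deal with this I would process the indices in order and show inductively that $\bigcup_{i=1}^s C_i \setminus \{x_1, \dots, x_s\}$ already spans all of $\bigcup_i A_i$: each $A_i$ is spanned by $C_i$, and each element of $C_i$ is either not one of the $x_j$'s, or it is some $x_j$ with $j < i$, which by the inductive hypothesis is spanned by elements of $\big(\bigcup_k C_k\big) \setminus \{x_1, \dots, x_s\}$. Then $\rank\big(\bigcup_i A_i \setminus \{x_1,\dots,x_s\}\big) = \rank\big(\bigcup_i A_i\big)$, whereas removing a set $B$ of size $< s$ from this union removes at most $|B| < s$ elements and hence cannot reduce the rank by more than the drop in cardinality unless some removed element is redundant — so $\big(\bigcup_i A_i\big) \setminus B$ has cardinality $|\bigcup_i A_i| - |B| > |\bigcup_i A_i| - s \ge \rank\big(\bigcup_i A_i\big)$ only once we combine with the previous sentence; more cleanly, since $\{x_1,\dots,x_s\}$ is a set of $s$ elements whose removal does not drop the rank, any subset $B$ with $|B| < s$ satisfies $\rank\big(\bigcup_i A_i \setminus B\big) = \rank\big(\bigcup_i A_i\big) > |B| + \big(\rank(\bigcup_i A_i) - |B|\big)$... here I would instead argue directly: $\bigcup_i A_i \setminus B$ contains a subset of the form $\big(\bigcup_i C_i\big) \setminus B'$ of rank equal to $\rank(\bigcup_i A_i)$ but, since $|B| < s$, at least one $x_j$ survives and is redundant, giving a dependent superset inside $\bigcup_i A_i \setminus B$.

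The main obstacle, and the step I would spend the most care on, is precisely this bookkeeping about which $x_j$ can appear in which $C_i$ and verifying that after deleting all of $\{x_1,\dots,x_s\}$ the remaining elements of $\bigcup_i C_i$ still span $\bigcup_i A_i$; the almost-disjointness hypothesis $A_i \cap (\bigcup_{j<i} A_j)$ independent is exactly what is needed here, since it guarantees this intersection is contained in the chosen basis $C_i$ rather than hitting the redundant part $A_i \setminus C_i$, so that the witnesses $x_i$ can be chosen outside all earlier $A_j$ and are therefore distinct. Once the spanning claim is established, the conclusion that deleting any $B$ with $|B| < s$ leaves a dependent set is a short counting argument: the surviving witnesses among $x_1,\dots,x_s$ — there is at least one — remain redundant over the surviving $C_i$'s, exhibiting a dependency inside $\big(\bigcup_i A_i\big)\setminus B$.
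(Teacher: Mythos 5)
The paper does not prove this statement---it is quoted from Rado and Asche---so there is no in-text argument to compare against; judged on its own, your proof is essentially correct. The core of your argument is sound: choosing each $C_i$ to be a maximal independent subset of $A_i$ containing $A_i\cap\bigl(\bigcup_{j<i}A_j\bigr)$ forces the witnesses $x_i\in A_i\setminus C_i$ to lie outside $\bigcup_{j<i}A_j$, hence to be distinct, and your bookkeeping correctly shows that the only $x_j$'s that can appear in $C_i$ are earlier ones, which are already in the closure of $\bigcup_k C_k\setminus\{x_1,\dots,x_s\}$ by induction; this yields $\rank\bigl(\bigcup_i A_i\bigr)\le\bigl|\bigcup_i A_i\bigr|-s$, and then the inequality chain you write mid-paragraph, $\bigl|\bigcup_i A_i\setminus B\bigr|\ge\bigl|\bigcup_i A_i\bigr|-|B|>\bigl|\bigcup_i A_i\bigr|-s\ge\rank\bigl(\bigcup_i A_i\bigr)\ge\rank\bigl(\bigcup_i A_i\setminus B\bigr)$, finishes the proof. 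You should commit to that counting argument and drop the alternative closing sentence about a ``surviving witness $x_j$ being redundant over the surviving $C_i$'s'': that version is not quite right as stated, because $B$ may delete elements of the $C_i$'s, so the surviving $x_j$ need not be in the closure of what survives of $\bigcup_i C_i$. Finally, note that the whole first half of your argument can be replaced by one application of submodularity per step: with $U_k=\bigcup_{i\le k}A_i$, the hypothesis gives $\rank(U_k\cap A_{k+1})=|U_k\cap A_{k+1}|$ and dependence gives $\rank(A_{k+1})\le|A_{k+1}|-1$, so $\rank(U_{k+1})\le\rank(U_k)+\rank(A_{k+1})-\rank(U_k\cap A_{k+1})\le|U_{k+1}|-(k+1)$ by induction; this reaches the same key inequality $\rank\bigl(\bigcup_i A_i\bigr)\le\bigl|\bigcup_i A_i\bigr|-s$ without any choice of bases or closure bookkeeping, while your version has the merit of exhibiting explicit deletable elements.
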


Lemma \ref{lemma:basic_indep}-\ref{lemma:size_dep} below provide us with easily recognized independent and dependent sets. We will then use these as building blocks for constructing new dependent sets applying Theorem \ref{thm:Asche}. The idea is illustrated in 
Example \ref{ex:rank2_dep} below.

\begin{lemma}\label{lemma:indep_reduce}
    Let $\Omega \subset [m] \times [n]$ and consider 
    \[
    \J = \{ j \in [n] \ | \ |\Omega \cap ( [m] \times \{j\})|>r \}.
    \]
    Then $\Omega \cap ( [m] \times \J)$ is independent if and only if $\Omega$ is.
\end{lemma}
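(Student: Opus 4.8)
The plan is to show that columns contributing at most $r$ entries are matroid-theoretically ``free'' in the sense that their variables can always be adjoined to an independent set without destroying independence, hence they contribute nothing to the elimination ideal. Write $\Omega' = \Omega \cap ([m] \times \J)$ and let $\Omega'' = \Omega \setminus \Omega'$ be the part supported on the columns $j \notin \J$, i.e.\ those with $|\Omega \cap ([m] \times \{j\})| \le r$. Since $\Omega'$ is a subset of $\Omega$, the ``only if'' direction is immediate from the fact that subsets of independent sets are independent. For the ``if'' direction, it suffices to prove that if $\Omega'$ is independent then so is $\Omega' \cup \{(i,j)\}$ for any single $(i,j) \in \Omega''$, and then iterate, adding the entries of $\Omega''$ one column at a time (or one entry at a time).

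The key step is the following claim: if $\Theta \subseteq [m] \times [n]$ is independent and $j_0 \in [n]$ is a column with $|\Theta \cap ([m] \times \{j_0\})| \le r$, then $\Theta \cup \{(i,j_0)\}$ is independent for every $i \in [m]$. To see this, recall $\rank \mM(r,[m]\times[n]) = r(m+n-r)$ and that the single-column set $[m] \times \{j_0\}$ is itself independent of rank $r \cdot 1 = ?$ — more precisely, on a single column the $(r+1)$-minors of $Z$ restrict to nothing (an $m\times 1$ matrix has no $(r+1)$-minors when $r \ge 1$), so $\emptyset_{E} $ on one column vanishes and any set of $\le$ ... Actually the clean formulation: the variables $\bar z_{ij}$ with $j$ fixed are algebraically independent over $\k(\bar Z_{\Theta})$ as long as at most $r$ of them are already determined, because a generic rank-$r$ matrix has any single column free to be anything in the column span once $r$ of its entries... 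Here is the precise argument I would use: it is a standard fact (see the references in the excerpt) that $\bar z_{ij}$ depends on $\bar Z_{\Theta}$ over $\k$ precisely when the coordinate projection drops dimension; for a column $j_0$ with only $s \le r$ known entries, the fiber of $\pi_{\Theta}$ has, in the $j_0$-column direction, at least $m - s \ge m - r$ free parameters (choose the remaining entries of column $j_0$ generically subject to lying in an $r$-dimensional column space, which is itself not yet pinned down), so $\bar z_{i j_0} \notin \overline{\k(\bar Z_\Theta)}$ for the $i$ with $(i,j_0)\notin\Theta$. Hence $\Theta \cup \{(i,j_0)\}$ is independent.

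Granting the claim, the ``if'' direction follows: order the entries of $\Omega''$ so that within each column $j \notin \J$ we add entries one at a time; when we add the entry $(i,j)$, the number of entries of the current independent set lying in column $j$ is at most (number already in $\Omega$ in that column) $\le r$ by the definition of $\J$, so the claim applies and independence is preserved. After all entries of $\Omega''$ are added we obtain that $\Omega = \Omega' \cup \Omega''$ is independent.

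The main obstacle is making the claim rigorous without circularity: one must justify carefully that a column with $\le r$ prescribed entries genuinely contributes a full $m - (\text{number of prescribed entries})$ to the fiber dimension, i.e.\ that no ``hidden'' minor relation forces those entries. The cleanest route is probably to argue on the generic point directly: choose a generic matrix $\bar Z$ over $\k(\bar Z_\Theta)$ of rank $r$, observe that specializing all but $s \le r$ entries of one column to generic scalars still leaves the matrix generic of rank $r$ (since $r$ generic entries impose no rank constraint on a single column of a rank-$r$ matrix), and conclude the transcendence degree of $\k(\bar Z_{\Theta \cup\{(i,j_0)\}})$ over $\k(\bar Z_\Theta)$ is $1$. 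Alternatively one can invoke Theorem~\ref{theorem:SLMF-sufficient}-style bookkeeping, but the transcendence-degree argument is the most self-contained.
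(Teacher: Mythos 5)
Your overall strategy (peel off the sparse columns and re-attach their entries one at a time) is reasonable, but there is a genuine gap: the key claim on which everything rests is both misstated and unproved. As stated --- ``if $\Theta$ is independent and $|\Theta \cap ([m]\times\{j_0\})| \le r$ then $\Theta \cup \{(i,j_0)\}$ is independent for every $i$'' --- the claim is \emph{false}. Take $r=1$, $m=n=2$, $\Theta = \{(1,1),(1,2),(2,2)\}$: this is independent, column $1$ carries exactly $r=1$ entry, yet adjoining $(2,1)$ yields the full $2\times 2$ grid, which is dependent. The heuristic you offer (``$m-s \ge m-r$ free parameters in the column direction'') breaks precisely at $s=r$: once the column space is generically pinned down by the other columns, $r$ generic entries of a column determine it completely. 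Your iteration is saved only by an off-by-one accident: when you add an entry of a column $j \notin \J$, the current set has at most $|\Omega \cap ([m]\times\{j\})| - 1 \le r-1$ entries in that column, so you only ever need the claim for $s \le r-1$. In that range the claim is true, but you do not prove it --- you explicitly flag it as ``the main obstacle'' and sketch a generic-point argument without carrying it out. A clean way to close it: use the embedding $\k(\bar{Z}) \subseteq \k(U,V)$ given by $\bar{z}_{ij} \mapsto (UV)_{ij}$ with $U$ an $m\times r$ and $V$ an $r\times n$ matrix of fresh variables; the $s+1 \le r$ entries of column $j_0$ in question are linearly independent linear forms (coefficient matrix a generic $(s+1)\times r$ submatrix of $U$) in the $r$ variables $v_{1j_0},\dots,v_{rj_0}$, hence algebraically independent over $\k(U, V_{[n]\setminus\{j_0\}}) \supseteq \k(\bar{Z}_{\Theta \setminus ([m]\times\{j_0\})})$.

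For comparison, the paper avoids this entry-by-entry analysis entirely: it extends $\Omega \cap ([m]\times\J)$ to a base $\Omega_2$ of the smaller matroid $\mM(r,[m]\times\J)$, pads every column outside $\J$ up to \emph{exactly} $r$ entries to form $\Omega_3 \supseteq \Omega$, and invokes Lemma 7 of \cite{Tsakiris-AMS-2023} (columns with exactly $r$ observed entries can be deleted without affecting base-ness) to conclude that $\Omega_3$ is a base. That route delegates the delicate boundary case --- a column with exactly $r$ entries --- to a known Gr\"obner-basis result, whereas your route must prove a transcendence statement from scratch; either works, but yours is incomplete as written.
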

\begin{proof}
Let $\Omega_1=\Omega \cap ( [m] \times \J)$. It is clear that $\Omega_1$ is independent if $\Omega$ is. Next, assume $\Omega_1$ is independent. Then $\Omega_1$ can be extended to a base $\Omega_2$ of the matroid $\mM(r,[m] \times \J)$. Next choose a set $\Omega_3$ such that 
\begin{itemize}[itemsep=5pt, topsep=5pt]
 \item $\Omega_3 \supseteq \Omega $,
    \item $\Omega_3 \cap ( [m] \times \J) = \Omega_2$, and
    \item $|\Omega_3 \cap ( [m] \times \{j\})|=r$ for $j \notin \J$.
\end{itemize}
By \cite[Lemma 7]{Tsakiris-AMS-2023} the set $\Omega_3$ is a base if and only if $\Omega_3 \cap ( [m] \times \J)$ is a base of the matroid $\mM(r,[m] \times \J)$. Since $\Omega_3 \cap ( [m] \times \J)=\Omega_2$ is indeed a base, we get that $\Omega_3$ is a base. As $\Omega_3 \supseteq \Omega $ we can conclude that $\Omega$ is independent. 
\end{proof}

\begin{lemma}\label{lemma:basic_indep}
Consider subsets $\I \subseteq [m]$, $\J \subseteq [n]$, and $\Omega \subseteq \I \times \J$. If $|\I|\le r$ or $|\J|\le r$ then $\Omega$ is an independent set of the matroid $\mM(r,[m] \times [n])$.   
\end{lemma}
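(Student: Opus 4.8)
The plan is to verify directly the defining condition $E_\Omega = I_{r+1}(Z)\cap\k[Z_\Omega]=0$. Note that in either case the hypothesis gives $\min\{|\I|,|\J|\}\le r$, which is all that will be used.

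First I would pick an arbitrary $f\in I_{r+1}(Z)\cap\k[Z_\Omega]$ and view it as a polynomial in the variables $Z_\Omega$. Given any tuple of values in $\K$ for these variables, form the $m\times n$ matrix $X$ over $\K$ whose $(i,j)$-entry is the prescribed value for $(i,j)\in\Omega$ and $0$ for $(i,j)\notin\Omega$. Because $\Omega\subseteq\I\times\J$, the matrix $X$ is supported on the submatrix with row set $\I$ and column set $\J$, hence $\rank X\le\min\{|\I|,|\J|\}\le r$. Consequently every $(r+1)$-minor of $Z$ vanishes at $X$, and since $f$ lies in the ideal generated by these minors, $f(X)=0$; as $f$ involves only the variables $Z_\Omega$, this says $f$ evaluates to $0$ at the chosen tuple.

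The tuple being arbitrary and $\K$ being infinite, it follows that $f=0$. Hence $E_\Omega=0$, i.e.\ $\Omega$ is independent in $\mM(r,[m]\times[n])$. There is essentially no obstacle here; the one point deserving attention is the rank estimate $\rank X\le\min\{|\I|,|\J|\}$ for an $\Omega$-supported matrix, which is precisely what converts the cardinality hypothesis on $\I$ or $\J$ into the vanishing of all $(r+1)$-minors at $X$. One may phrase the same idea geometrically: zero-padding shows the coordinate projection $\pi_{\I\times\J}$, and a fortiori $\pi_\Omega$, is surjective on $\K$-points, hence dominant, so $\Omega$ is independent; but the ideal-theoretic computation above is the shortest route.
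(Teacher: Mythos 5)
Your proof is correct. The key rank estimate is right: a matrix supported on $\I\times\J$ has all nonzero rows indexed by a subset of $\I$ and all nonzero columns indexed by a subset of $\J$, so its rank is at most $\min\{|\I|,|\J|\}\le r$; hence every $(r+1)$-minor, and therefore every element of $I_{r+1}(Z)$, vanishes at the zero-padded matrix $X$, and a polynomial in $\k[Z_\Omega]$ vanishing on all of $\K^\Omega$ (or $\k^\Omega$, $\k$ being infinite) is zero.

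Your route is genuinely different from the paper's. The paper reduces to the smaller matroid $\mM(r,\I\times\J)$ via Lemma \ref{lemma:indep_reduce} (which in turn rests on a lemma from \cite{Tsakiris-AMS-2023}), and then observes that $I_{r+1}(Z_{\I\times\J})=0$ when one of $|\I|,|\J|$ is at most $r$. That reduction is needed in their setup precisely because the elimination ideal $I_{r+1}(Z)\cap\k[Z_\Omega]$ is taken inside the full ring $\k[Z]$, not inside $\k[Z_{\I\times\J}]$, so the vanishing of $I_{r+1}(Z_{\I\times\J})$ alone does not immediately give $E_\Omega=0$. Your zero-padding argument sidesteps this entirely: it works directly with the full ideal $I_{r+1}(Z)$ and is self-contained, needing no auxiliary lemma. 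It also yields slightly more, namely that $\pi_\Omega$ is surjective on rational points (every partial matrix supported on $\Omega$ extends by zeros to a matrix of rank at most $r$), not merely dominant. The paper's phrasing, on the other hand, fits a uniform template in which independence questions are systematically pushed down to submatroids, which is the pattern reused throughout Section 5.
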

\begin{proof}
    By Lemma \ref{lemma:indep_reduce} $\Omega$ is independent if it is an independent set of the matroid $\mM(r,\I \times \J)$. But as $|\I|\le r$ or $|\J|\le r$ the ideal of minors $I_{r+1}(Z_{\I \times \J})$ is the zero ideal, which makes the elimination ideal $E_{\Omega}$ zero as well.   
\end{proof}

\begin{lemma}\label{lemma:small_block}
Let $\Omega \subseteq [m] \times [n]$ and suppose $\Omega \subseteq \I \times \J$ such that $|\I|=|\J|=r+1$. Then $\Omega$ is dependent in the matroid $\mM(r,[m]\times [n])$ if and only if $\Omega = \I \times \J$.    
\end{lemma}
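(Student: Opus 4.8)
The statement has two directions. The ``if'' direction is immediate: if $\Omega = \I \times \J$ with $|\I| = |\J| = r+1$, then $\k[Z_\Omega]$ contains the single $(r+1)$-minor $\det(Z_{\I \times \J})$, which is a non-zero element of $I_{r+1}(Z) \cap \k[Z_\Omega] = E_\Omega$; hence $E_\Omega \neq 0$ and $\Omega$ is dependent. In fact this also follows directly from Theorem~\ref{thm:height-determinantal} by taking $\T = \I \times \J$: then $I_\T$ is generated by that one minor, so $\height(I_\T) = 1 > 0 = |\T \setminus \Omega|$.

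For the ``only if'' direction, I would argue the contrapositive: if $\Omega \subsetneq \I \times \J$, then $\Omega$ is independent. The key observation is that when $\Omega$ is a proper subset of $\I \times \J$, the ideal $I_\Omega$ of minors supported on $\Omega$ is the zero ideal, since the only $(r+1)$-subsets of rows and columns available inside $\I \times \J$ are $\I$ and $\J$ themselves, and the corresponding minor uses a variable $z_{ij}$ with $(i,j) \notin \Omega$. So no generator of $I_{r+1}(Z)$ lies in $\k[Z_\Omega]$. However, $I_\Omega = 0$ does not by itself give $E_\Omega = 0$, since $E_\Omega$ can be strictly larger than $I_\Omega$. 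The substantive step is therefore to show $E_\Omega = 0$, i.e. that $\Omega$ is genuinely independent in the matroid. The cleanest route is to exhibit $\Omega$ as a subset of a known independent set. By Lemma~\ref{lemma:indep_reduce} we may pass to columns $j$ with more than $r$ entries of $\Omega$; since $|\I| = r+1$, such a column has exactly $r$ or $r+1$ entries, and if it has $r+1$ entries in all of $\I$ we may — because $\Omega \neq \I \times \J$ — delete one offending entry from a different column. More directly: count that $|\Omega| \le (r+1)^2 - 1 = r^2 + 2r$, while $\rank \mathcal{M}(r, \I \times \J) = r(2(r+1) - r) = r^2 + 2r$, so $\Omega$ has at most the rank of the whole matroid on $\I \times \J$; combining with $I_\Omega = 0$ and an explicit check (or invoking Theorem~\ref{theorem:SLMF-sufficient} with the trivial partition into singletons when $|\Omega|$ is exactly the rank) shows $\Omega$ is independent there, hence in $\mathcal{M}(r,[m]\times[n])$ by Lemma~\ref{lemma:indep_reduce}.

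The main obstacle is precisely this gap between ``$I_\Omega = 0$'' and ``$E_\Omega = 0$'': it is not automatic, and Example~\ref{ex:SLMF-but-not-base} shows that in general $E_\Omega$ can be nonzero even when no single minor is supported on $\Omega$. The reason it works here is the tightness of the $(r+1) \times (r+1)$ window: any elimination relation would have to come from eliminating variables $z_{ij}$ with $(i,j) \in (\I \times \J) \setminus \Omega$ from the ideal $I_{r+1}(Z_{\I \times \J})$, which is principal, generated by a single multilinear polynomial $D = \det(Z_{\I \times \J})$; eliminating any subset of variables from the principal ideal $(D)$ that includes a variable actually occurring in $D$ yields the zero ideal, because $D$ is irreducible and linear in each variable. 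Making this last point rigorous — that $(D) \cap \k[Z_\Omega] = 0$ whenever some variable of $D$ is absent from $Z_\Omega$ — is the one place requiring care, and it follows from the fact that $D$ has degree $1$ in each $z_{ij}$, so a nonzero multiple $hD \in \k[Z_\Omega]$ would force $h$ to ``cancel'' a missing variable, impossible in a polynomial ring. This gives $E_\Omega = (D) \cap \k[Z_\Omega] = 0$, completing the proof.
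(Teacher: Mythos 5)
Your proof is correct, and for the substantive (``only if'') direction it takes a genuinely different route from the paper. Both arguments begin with the same reduction: pass to the matroid $\mM(r,\I\times\J)$ via Lemma~\ref{lemma:indep_reduce} (applied to columns and, after transposing, to rows). At that point the paper simply cites \cite{Tsakiris-AMS-2023} for the fact that the only dependent set of $\mM(r,[r+1]\times[r+1])$ is the full set, whereas you prove this from scratch: $I_{r+1}(Z_{\I\times\J})$ is the principal ideal $(D)$ with $D=\det(Z_{\I\times\J})$ multilinear and supported on every variable of $\I\times\J$, so for any nonzero $h$ and any $(i,j)\in(\I\times\J)\setminus\Omega$ one has $\deg_{z_{ij}}(hD)=\deg_{z_{ij}}(h)+1>0$, whence $hD\notin\k[Z_\Omega]$ and $(D)\cap\k[Z_\Omega]=0$. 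This is a clean, self-contained argument and is exactly the right mechanism; your approach buys independence from the external reference at essentially no cost. Two small caveats. First, the literal identity ``$E_\Omega=(D)\cap\k[Z_\Omega]$'' is an abuse: $E_\Omega$ is the elimination ideal of $I_{r+1}(Z)$ for the full $m\times n$ matrix, and what the reduction actually gives is that $E_\Omega=0$ if and only if $I_{r+1}(Z_{\I\times\J})\cap\k[Z_\Omega]=(D)\cap\k[Z_\Omega]=0$; phrase it that way. Second, the middle of your argument (the cardinality count $|\Omega|\le r^2+2r$, the suggestion to ``delete one offending entry,'' and the appeal to Theorem~\ref{theorem:SLMF-sufficient} with a partition into singletons) is a dead end: having size at most the rank does not imply independence, and that theorem requires a partition of the column set into exactly $r$ blocks, which singletons do not provide. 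Since you supersede all of this with the multilinearity argument, you should simply delete that detour; the proof that remains is complete.
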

\begin{proof}
By Lemma \ref{lemma:basic_indep} $\Omega$ is dependent if and only if it is a dependent set of the matroid $\mM(r,\I \times \J)$. It is proved in \cite{Tsakiris-AMS-2023}, the only dependent set if this matroid is $\I \times \J$. 
\end{proof}

\begin{lemma}\label{lemma:size_dep}
Let $\Omega \subseteq [m] \times [n]$ and consider subsets $\I \subseteq [m]$, $\J \subseteq [n]$ such that $|\I|>r$ and $|\J|>r$. If 
\begin{equation}\label{eq:lemma_size}
|\Omega \cap (\I \times \J)| >r(|\I|+|\J|-r), \ \text{or equivalently} \ |(\I \times \J) \setminus \Omega|< (|\I|-r)(|\J|-r),
\end{equation}
 then $\Omega$ is a dependent set.  
\end{lemma}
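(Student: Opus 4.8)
The plan is to invoke Theorem~\ref{thm:height-determinantal} with the ``box'' set $\T = \I \times \J$. First I would observe that, among the $(r+1)$-minors of $Z$, the ones lying in $\k[Z_\T]$ are precisely the $(r+1)$-minors of the submatrix $Z_{\I\times\J}$; hence, with the conventions of \S\ref{section:Background and notation}, the ideal $I_\T$ appearing in Theorem~\ref{thm:height} is exactly the determinantal ideal $I_{r+1}(Z_{\I\times\J})$ of a generic $|\I|\times|\J|$ matrix of variables. Since $|\I|>r$ and $|\J|>r$ we have $r+1 \le \min\{|\I|,|\J|\}$, so the standard height formula for determinantal ideals (already recorded in \S\ref{section:Background and notation} for the ambient case) applies and gives
\[
\height(I_\T) = (|\I|-r)(|\J|-r).
\]

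Next I would record the elementary identity $|\Omega \cap (\I\times\J)| + |(\I\times\J)\setminus\Omega| = |\I|\,|\J|$, together with $|\I|\,|\J| - (|\I|-r)(|\J|-r) = r(|\I|+|\J|-r)$, which is exactly the computation establishing the equivalence of the two forms of hypothesis~\eqref{eq:lemma_size}. Under that hypothesis we therefore have
\[
|\T \setminus \Omega| = |(\I\times\J)\setminus\Omega| < (|\I|-r)(|\J|-r) = \height(I_\T),
\]
and Theorem~\ref{thm:height-determinantal} immediately yields that $\Omega$ is dependent in $\mM(r,[m]\times[n])$.

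There is essentially no obstacle beyond bookkeeping. The only two points worth double-checking are: that the generating set implicitly used in Theorem~\ref{thm:height-determinantal} is the full set of $(r+1)$-minors of $Z$, so that $I_\T$ genuinely coincides with $I_{r+1}(Z_{\I\times\J})$ rather than a strictly smaller ideal; and that the determinantal height formula is being applied in its valid range $r < \min\{|\I|,|\J|\}$, which is guaranteed precisely by the standing assumptions $|\I|>r$ and $|\J|>r$.
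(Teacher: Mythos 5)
Your proof is correct, but it takes a different route from the one in the paper. You invoke Theorem~\ref{thm:height-determinantal} with $\T=\I\times\J$, identify $I_\T$ with the determinantal ideal $I_{r+1}(Z_{\I\times\J})$ of a generic $|\I|\times|\J|$ matrix, and use the height formula $\height(I_\T)=(|\I|-r)(|\J|-r)$; this is exactly the observation the authors record \emph{after} the lemma, in Remark~\ref{rmk:theorems_detect_basic_dep}, as evidence that the lemma's dependent sets are also caught by the height criterion. The paper's own proof is more elementary: it sets $\Omega'=\Omega\cap(\I\times\J)$, notes that the rank of the smaller matroid $\mM(r,\I\times\J)$ equals $r(|\I|+|\J|-r)$, concludes that $|\Omega'|$ exceeding this rank forces $\Omega'$ to be dependent there (so $I_{r+1}(Z_{\I\times\J})\cap\k[Z_{\Omega'}]\ne 0$), and then observes that this same nonzero elimination-ideal element witnesses dependence in $\mM(r,[m]\times[n])$. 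The paper's argument needs only the dimension of the determinantal variety and basic matroid axioms, whereas yours additionally relies on Theorem~\ref{thm:height} (the localization argument) and the height formula for determinantal ideals; on the other hand, your route makes explicit that this class of dependent sets is an instance of the general height criterion, which is precisely the point of Remark~\ref{rmk:theorems_detect_basic_dep} and of Conjecture~\ref{conj:height}. Both of your flagged checkpoints (that $I_\T$ is the full determinantal ideal of the submatrix, and that the height formula applies because $|\I|,|\J|>r$) are indeed the right things to verify, and there is no circularity since Theorem~\ref{thm:height-determinantal} is established before this lemma.
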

\begin{proof}
The equivalence of the two inequalities \eqref{eq:lemma_size}
follows from the fact that 
\[|\I \times \J| - r(|\I|+|\J|-r) = (|\I|-r)(|\J|-r).\]
Let $\Omega' = \Omega \cap (\I \times \J)$. 
The bases of the smaller matroid $\mM(r,\I \times \J)$ have size $r(|\I|+|\J|-r)$, so if $|\Omega'| >r(|\I|+|\J|-r)$ then $\Omega'$ is a dependent set of this matroid. Then $I_{r+1}(Z_{\I \times \J}) \cap k[Z_{\Omega'}] \ne 0$, which makes $\Omega'$ a dependent set of the matroid $\mM(r,[m] \times [n])$ as well. As $\Omega' \subseteq \Omega$, also $\Omega$ is a dependent set.
\end{proof}

We now revisit Example \ref{ex:SLMF-but-not-base} and show once again that $\Omega$ is dependent, this time using Theorem \ref{thm:Asche}. 

\begin{example}\label{ex:rank2_dep}
Let $m$, $n$, $r$, and $\Omega$ be as in Example \ref{ex:SLMF-but-not-base}. 
In Theorem \ref{thm:Asche} take $A_1=\{1,2,3\} \times \{1,2,3\}$ and $A_2=\{3,4,5\} \times \{3,4,5\}$, 
and $B=\{(3,3)\}$. 
By Lemma \ref{lemma:small_block} the sets $A_1$ and $A_2$ are dependent, while $A_1 \cap A_2 $ is independent. 
It follows that $\Omega=(A_1 \cup A_2) \setminus B$ is dependent. 
\end{example}

Generalizing the idea from Example \ref{ex:rank2_dep} we are able to prove the following.

\begin{theorem}\label{thm:dependent_Asche} 
Let $\Omega \subseteq [m] \times [n]$ and assume there are sets $\I_\alpha \times \J_\alpha \subseteq [m] \times [n]$ with $|\I_\alpha|,|\J_\alpha|>r$ for $\alpha=1, \ldots, s$
 such that 
\begin{equation}\label{eq:condition_dep_Asche}
(\I_\alpha \times \J_\alpha) \cap \biggl( \bigcup_{k < \alpha} \I_k \times \J_k \biggl) \subseteq \I'_\alpha \times \J'_\alpha \quad  \text{where} \ \min(|\I'_\alpha|, |\J'_\alpha|) \le r \
\end{equation}
 for each  $1<\alpha \le s$, and
\begin{equation}\label{eq:ineq_dep_Asche}
\left| \bigcup_{ \alpha= 1}^s \I_\alpha \times \J_\alpha  \setminus \Omega \right| < \sum_{\alpha=1}^s(|\I_\alpha|-r)(|\J_\alpha|-r). 
\end{equation}
Then $\Omega$ is a dependent set of the matroid $\mM(r,[m] \times [n])$. 
\end{theorem}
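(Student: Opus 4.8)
The plan is to apply Theorem~\ref{thm:Asche} with the building blocks $A_\alpha = \I_\alpha \times \J_\alpha$, using Lemma~\ref{lemma:size_dep} to supply dependent sets and Lemma~\ref{lemma:basic_indep} to supply the required independence of the intersections $A_\alpha \cap (\bigcup_{k<\alpha} A_k)$. The subtlety is that the $A_\alpha$ themselves need not be dependent (indeed $(|\I_\alpha|-r)(|\J_\alpha|-r)$ may exceed the number of ``missing'' entries only after summing), so we cannot feed the $A_\alpha$ directly into Theorem~\ref{thm:Asche}. Instead, first I would work with the completed grid: set $\Omega^\star = \bigcup_{\alpha=1}^s A_\alpha$, which is dependent by Lemma~\ref{lemma:size_dep} applied to each block (since $A_\alpha = \I_\alpha \times \J_\alpha$ is the full grid, it certainly has more than $r(|\I_\alpha|+|\J_\alpha|-r)$ entries, so each $A_\alpha$ is dependent in $\mM(r,[m]\times[n])$). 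Then by hypothesis \eqref{eq:condition_dep_Asche} together with Lemma~\ref{lemma:basic_indep}, each $A_\alpha \cap (\bigcup_{k<\alpha} A_k)$ is contained in $\I'_\alpha \times \J'_\alpha$ with $\min(|\I'_\alpha|,|\J'_\alpha|)\le r$, hence is independent. So Theorem~\ref{thm:Asche} applies to $A_1,\dots,A_s$.

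The issue is that Theorem~\ref{thm:Asche} as stated removes a set $B$ with $|B|<s$ and concludes $\bigl(\bigcup A_\alpha\bigr)\setminus B$ is dependent, whereas we want to remove the potentially large set $\Omega^\star \setminus \Omega$, whose size is bounded by \eqref{eq:ineq_dep_Asche} by $\sum_\alpha(|\I_\alpha|-r)(|\J_\alpha|-r) - 1$, which is in general far bigger than $s-1$. To bridge this gap I would \emph{not} use Theorem~\ref{thm:Asche} with the raw blocks $A_\alpha$, but rather decompose each block into many small dependent sub-blocks. Concretely, inside each grid $\I_\alpha\times\J_\alpha$ one can choose $(|\I_\alpha|-r)(|\J_\alpha|-r)$ many $(r+1)\times(r+1)$ sub-blocks $\I_\alpha^{(t)}\times\J_\alpha^{(t)}$, arranged so that consecutive ones overlap in an $r\times(r+1)$ (or $(r+1)\times r$) strip — a ``staircase'' of overlapping $(r+1)$-squares sweeping the grid. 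Each such sub-block is dependent by Lemma~\ref{lemma:small_block}, and its intersection with the union of the previously listed sub-blocks is contained in a set with one side of size $\le r$, hence independent by Lemma~\ref{lemma:basic_indep}. Counting carefully, a grid $\I_\alpha\times\J_\alpha$ can be covered by exactly $(|\I_\alpha|-r)(|\J_\alpha|-r)$ overlapping $(r+1)$-squares (think of the positions of the top-left corner of the square ranging over an $(|\I_\alpha|-r)\times(|\J_\alpha|-r)$ subgrid). Listing all of these over all $\alpha$, in an order compatible with \eqref{eq:condition_dep_Asche} (all sub-blocks of $A_1$ first, then those of $A_2$, etc.), yields $s' := \sum_\alpha (|\I_\alpha|-r)(|\J_\alpha|-r)$ dependent sets whose incremental intersections are all independent, and whose union is $\Omega^\star$.

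Now Theorem~\ref{thm:Asche} applies with these $s'$ dependent sets and $B = \Omega^\star\setminus\Omega$: hypothesis \eqref{eq:ineq_dep_Asche} says precisely $|B| < s'$, so $\Omega^\star \setminus B = (\bigcup_\alpha \I_\alpha\times\J_\alpha) \cap \Omega$ is dependent, and since this set is contained in $\Omega$, also $\Omega$ is dependent. The main obstacle — and the place requiring the most care — is verifying the chained-independence hypothesis of Theorem~\ref{thm:Asche} for the fine subdivision: one must check both that within a single grid $\I_\alpha\times\J_\alpha$ the staircase of $(r+1)$-squares can be ordered so each new square meets the union of earlier ones only in a set with a short side, \emph{and} that when passing from grid $\alpha-1$ to grid $\alpha$ the accumulated intersection is still governed by \eqref{eq:condition_dep_Asche} and thus lands in $\I'_\alpha\times\J'_\alpha$. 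The first point is a purely combinatorial lemma about covering an $a\times b$ grid by $ab$-many overlapping $(r+1)$-squares with the nested-independence property (in the spirit of the diagonal/anti-diagonal constructions of \S\ref{section: Diagonal Bases}); the second is immediate from \eqref{eq:condition_dep_Asche} once one notes any sub-block of $A_\alpha$ meets $\bigcup_{k<\alpha}A_k$ inside $A_\alpha\cap\bigcup_{k<\alpha}A_k \subseteq \I'_\alpha\times\J'_\alpha$.
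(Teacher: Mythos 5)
Your overall strategy is exactly the paper's: subdivide each $\I_\alpha \times \J_\alpha$ into $(|\I_\alpha|-r)(|\J_\alpha|-r)$ many $(r+1)\times(r+1)$ blocks, each dependent by Lemma \ref{lemma:small_block}, and feed the whole list into Theorem \ref{thm:Asche} with $B = \bigl(\bigcup_\alpha \I_\alpha\times\J_\alpha\bigr)\setminus\Omega$, so that \eqref{eq:ineq_dep_Asche} gives $|B|<S$. The difference is in the choice of sub-blocks and, more importantly, in the verification of the chained-independence hypothesis, which is where your plan has a genuine gap.

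First, for your sliding-window ``staircase'' family the claim that each new square meets the union of the earlier ones in a set with a short side is false, so Lemma \ref{lemma:basic_indep} does not apply as you invoke it: already within a single grid, the square with upper-left corner $(2,2)$ meets the union of the lexicographically earlier squares in the entire $(r+1)\times(r+1)$ square minus its lower-right corner, which is not contained in any $\I'\times\J'$ with a side of size $\le r$. That set is still independent, but only via Lemma \ref{lemma:small_block} (proper subset of an $(r+1)$-square), and to use that lemma you must exhibit an element of each sub-block lying in \emph{no} earlier set. Second, and this is the real gap, the hypothesis of Theorem \ref{thm:Asche} concerns the intersection of each sub-block with the union of \emph{all} earlier sets at once --- earlier sub-blocks of the same grid together with all of $\bigcup_{k<\alpha}\I_k\times\J_k$. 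You check the two contributions separately, but independence of each piece does not give independence of their union: the within-grid part can be the square minus one corner, and if that corner happens to lie in $\I'_\alpha\times\J'_\alpha$ (which can occur for your windows when $\J'_\alpha$, say, consists of the \emph{last} $r$ columns of $\J_\alpha$), the combined intersection is the whole $(r+1)$-square and is dependent, breaking the application of Theorem \ref{thm:Asche}. The paper avoids this by relabelling so that $\I'_\alpha$ and $\J'_\alpha$ are initial segments and taking the sub-blocks $\{1,\dots,r,r+i\}\times\{1,\dots,r,r+j\}$: each then privately owns the element $(r+i,r+j)$, which lies in no other sub-block of the same grid and, because $\min(|\I'_\alpha|,|\J'_\alpha|)\le r$, not in $\I'_\alpha\times\J'_\alpha$ either, so the combined intersection is always a proper subset. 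Your construction can be repaired along the same lines (normalize $\I'_\alpha\times\J'_\alpha$ to the top-left corner and track the private lower-right corner of each window), but as written the decisive step is not established.
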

\begin{proof}

If $s=1$ then the result follows directly from Lemma \ref{lemma:size_dep}. Assume $s>1$. 

The first part of the proof is to replace each $\I_\alpha \times \J_\alpha$ by $(|\I_\alpha|-r)(|\J_\alpha|-r)$ submatrices
\[
\I^{ij}_\alpha \times \J^{ij}_\alpha \qquad  1 \le i \le   |\I_\alpha|-r , \  1 \le j \le   |\J_\alpha|-r
\] 
of type $(r+1) \times (r+1)$ such that 
\begin{equation}\label{eq:proofAsche1}
\bigcup_{i,j} \I^{ij}_\alpha \times \J^{ij}_\alpha = \I_\alpha \times \J_\alpha \quad \text{and}
\end{equation}
\begin{equation}\label{eq:proofAsche2}
(\I^{ij}_\alpha \times \J^{ij}_\alpha) \cap \left(  \left( \bigcup_{(k,\ell) \ne (i,j)} \I^{k \ell}_\alpha \times \J^{k \ell }_\alpha \right) \cup \left( \bigcup_{\beta<\alpha} \I_\beta \times \J_\beta \right) \right) \quad \text{is independent.}
\end{equation}
To avoid a heavy notation, we may assume 
\[
\I_\alpha=\{ 1,2, \ldots, |\I_\alpha|\}, \quad \J_\alpha=\{ 1,2, \ldots, |\J_\alpha|\}
\]
and if $\alpha>1$ 
\[
\I'_\alpha=\{ 1,2, \ldots, |\I'_\alpha|\}, \quad \J'_\alpha=\{ 1,2, \ldots, |\J'_\alpha|\},
\]
where we recall that $ \min \big(|\I'_\alpha|, |\J'_\alpha|\big) \le r$.
Let
\[
 \I^{ij}_\alpha \times \J^{ij}_\alpha = \{1,2, \ldots, r, r+i\} \times \{1, 2, \ldots, r, r+j\} \quad \text{for} \ 1 \le i \le   |\I_\alpha|-r , \  1 \le j \le   |\J_\alpha|-r.
\]
Notice that $(r+i, \, r+j) \in  \I^{k \ell}_\alpha \times \J^{k \ell }_\alpha$ if and only if $(k, \ell) = (i, j)$. Moreover, the element $(r+i,\, r+j)$ is not in  $\I'_\alpha \times \J'_\alpha$ and hence not in $\I_\beta \times \J_\beta$ for any $\beta<\alpha$. It follows that the set \eqref{eq:proofAsche2} is a subset of $\I^{ij}_\alpha \times \J^{ij}_\alpha$ not containing $(r+i, \, r+j)$, so by Lemma \ref{lemma:small_block} the set   \eqref{eq:proofAsche2} is independent. The equality \eqref{eq:proofAsche1} is clear from the definition of $ \I^{ij}_\alpha \times \J^{ij}_\alpha$. 

The second part of the proof is to apply Theorem \ref{thm:Asche}. Let $S= \sum_{\alpha=1}^s(|\I_\alpha|-r)(|\J_\alpha|-r)$. We choose $A_1, \ldots, A_S$ so that the first $(|\I_1|-r)(|\J_1|-r)$ of the $A_k$'s are the $\I^{ij}_1 \times \J^{ij}_1 $'s. The next $(|\I_2|-r)(|\J_2|-r)$ of the $A_k$'s are the $\I^{ij}_2 \times \J^{ij}_2$'s, and so on. By Lemma \ref{lemma:small_block} each $A_i$ is dependent, and \eqref{eq:proofAsche2} together with Lemma \ref{lemma:basic_indep} implies that each set $A_i \cap ( \bigcup_{j<i}A_j)$ is independent. By \eqref{eq:proofAsche1} we have $\bigcup_{i=1}^SA_i = \bigcup_{\alpha=1}^s\I_\alpha \times \J_\alpha$. Now choose $B=\bigcup_{i=1}^SA_i \setminus \Omega$. Then by \eqref{eq:ineq_dep_Asche} we have $|B|<S$ and we can apply Theorem \ref{thm:Asche} to conclude that 
$\bigcup_{i=1}^SA_i \setminus B = \Omega$
is a dependent set. 
\end{proof}

\begin{remark}\label{rmk:theorems_detect_basic_dep}
    The dependent sets described in Lemma \ref{lemma:size_dep} are detected both by Theorem \ref{thm:height-determinantal} and Theorem \ref{thm:dependent_Asche}. Indeed, we saw in the proof of Theorem \ref{thm:dependent_Asche} that Lemma \ref{lemma:size_dep} corresponds to the case $s=1$. As for Theorem \ref{thm:height-determinantal}, taking $\T=\I \times \J$ and recalling that $\height(I_\T)=(|\I|-r)(|\J|-r)$, one of the equivalent inequalities  in Lemma \ref{lemma:size_dep} states precisely that $|\T \setminus \Omega|<\height(I_\T)$.
\end{remark}

In Example \ref{ex:rank4_dep_ladder} we see another dependent set detected by both Theorem \ref{thm:height-determinantal} and Theorem \ref{thm:dependent_Asche}. Here the application of Theorem \ref{thm:Asche} is not as immediate as in Example \ref{ex:rank2_dep}.

\begin{example}\label{ex:rank4_dep_ladder}
Consider $\Omega$ given by 
\[
\begin{pmatrix}
1 & 1 & 1 & 1 & 1 & \multicolumn{1}{c|}{1} & 0 & 0 & 0 \\
1 & 1 & 1 & 1 & 1 & \multicolumn{1}{c|}{1} & 0 & 0 & 0 \\
1 & 1 & 1 & 1 & 1 & \multicolumn{1}{c|}{1} & 0 & 0 & 0 \\
1 & 1 & 1 & 1 & 1 & \multicolumn{1}{c|}{1} & 0 & 0 & 0 \\
\cline{7-9}
1 & 1 & 1 & 1 & 0 & 0 & 1 & 1 & 1 \\
1 & 1 & 1 & 1 & 0 & 0 & 1 & 1 & 1 \\
\cline{1-4}
0 & 0 & 0 & 0 & \multicolumn{1}{|c}{1} & 1 & 1 & 1 & 1 \\
0 & 0 & 0 & 0 & \multicolumn{1}{|c}{1} & 1 & 1 & 1 & 1 \\
0 & 0 & 0 & 0 & \multicolumn{1}{|c}{1} & 1 & 1 & 1 & 1 
\end{pmatrix}
\]
where $r=4$. Let $\T=(\I_1 \times \J_1) \cup (\I_2 \times \J_2)$ with 
\[
\I_1=\J_1=\{   1, \ldots,6  \} \quad \text{and} \quad \I_2 = \J_2=\{  5, \ldots, 9 \},
\]
and note that $|\T \setminus \Omega|=4$.
To apply Theorem \ref{thm:height-determinantal} we need to compute the height of the ideal $I_\T$. The set $\T$ can be considered a ladder (see \S \ref{section: Diagonal Bases} for a definition), and we can use \cite[Theorem 1.15]{Gorla} to compute
\[
\height(I_\T) = | \{1, 2\} \times \{5, 6\} \cup \{(5,9)\}| = 5. 
\]
It then follows that $\Omega$ is dependent as $5>|\T \setminus \Omega|$. 

Alternatively, we can compute 
\[
(|\I_1|-r)(|\J_1|-r)+(|\I_2|-r)(|\J_2|-r)=4+1=5
\]
and apply Theorem \ref{thm:dependent_Asche} to see that $\Omega$ is dependent. 
\end{example}

We now have both Theorem \ref{thm:height} and Theorem \ref{thm:dependent_Asche} for detecting dependent sets in the matroid $\mM(r, [m]\times [n])$. 
Taking $\T=\bigcup_{\alpha=1}^s\I_\alpha \times \J_\alpha$, both theorems state that $\Omega$ is dependent if $|\T \setminus \Omega|$ is smaller than a certain number. 
As 
\[
\height( I_{\I \times \J}) = (|\I|-r)(|\J|-r)
\]
one might wonder if the two numbers to which $|\T \setminus \Omega|$ is compared are the same. This is indeed the case in all examples in this section, but we can see in Example \ref{ex:height_formula} that it is not true in general.

\begin{example}\label{ex:height_formula}
Let $r=2$, and $\I_1=\J_1=\I_2=\{1,2,3\}$ and $\J_2=\{3 , 4, 5\}$. Then 
$$\T=(\I_1 \times \J_1) \cup (\I_2 \times \J_2) = \{1,2,3\} \times \{1,2,3,4,5\}$$ 
and  
 $\height(I_\T)=3$ while $(|\I_1|-r)(|\J_1|-r)+(|\I_2|-r)(|\J_2|-r)=2$.
\end{example}
This raises the following question.

\begin{question}\label{que:height_ineq}
     Suppose $\I_1 \times \J_1, \ldots, \I_s \times \J_s$ satisfies condition \eqref{eq:condition_dep_Asche} of Theorem \ref{thm:dependent_Asche}, and let $\T= \bigcup_{\alpha=1}^s \I_\alpha \times \J_\alpha$. Is it true that
\[ \height(I_\T) \ge \sum_{\alpha=1}^s \height(I_{\I_\alpha \times \J_\alpha}) \ ? \]%
\end{question}%
If the inequality in Question \ref{que:height_ineq} would hold then Theorem \ref{thm:height} would provide an alternative proof of Theorem \ref{thm:dependent_Asche}. 

In Example \ref{ex:rank5_dep}  the set $\T=\bigcup \I_\alpha \times \J_\alpha$ is not a ladder, and there is no direct formula for computing $\height(I_\T)$. 

\begin{example}\label{ex:rank5_dep}
Let $m=n=12$ and $r=5$, and consider the set $\Omega$ given by
\setcounter{MaxMatrixCols}{20}
\[
\begin{pmatrix}
1  & 1 & 1  & \multicolumn{1}{c|}{1} & 1 & \multicolumn{1}{c|}{1} & 1 & \multicolumn{1}{c|}{1} & 1 & \multicolumn{1}{c|}{1} & 1 & 1 \\
1  & 0 & 1  & \multicolumn{1}{c|}{1} & 1 & \multicolumn{1}{c|}{1} & 1 & \multicolumn{1}{c|}{1} & 1 & \multicolumn{1}{c|}{1} & 1 & 1 \\
1  & 1 & 0  & \multicolumn{1}{c|}{1} & 1 & \multicolumn{1}{c|}{1} & 1 & \multicolumn{1}{c|}{1} & 1 & \multicolumn{1}{c|}{1} & 1 & 1 \\
1  & 1 & 1  & \multicolumn{1}{c|}{0} & 1 & \multicolumn{1}{c|}{1} & 1 & \multicolumn{1}{c|}{1} & 1 & \multicolumn{1}{c|}{1} & 1 & 1 \\
\cline{1-4} \cline{7-12}
1 & 1 & 1 & 1 & 1 & 1 & \multicolumn{1}{|c}{0} & 0 & 0 & 0 & 0 & 0 \\
1  & 1 & 1 & 1 & 1 & 1 & \multicolumn{1}{|c}{0} & 0 & 0 & 0 & 0 & 0 \\
\cline{1-8}
1  & 1 & 1 & 1 &  \multicolumn{1}{|c}{0} & 0 &  \multicolumn{1}{|c}{1} & 1 &  \multicolumn{1}{|c}{0} & 0 & 0 & 0 \\
1  & 1 & 1 & 1 &  \multicolumn{1}{|c}{0} & 0 &  \multicolumn{1}{|c}{1} & 1 &  \multicolumn{1}{|c}{0} & 0 & 0 & 0 \\
\cline{1-4} \cline{7-10}
1  & 1 & 1 & 1 &  \multicolumn{1}{|c}{0} & 0 &  0 & 0 & \multicolumn{1}{|c}{1} & 1 &  \multicolumn{1}{|c}{0} & 0  \\
1  & 1 & 1 & 1 &  \multicolumn{1}{|c}{0} & 0 &  0 & 0 & \multicolumn{1}{|c}{1} & 1 &  \multicolumn{1}{|c}{0} & 0  \\
\cline{1-4} \cline{9-12}
1  & 1 & 1 & 1 &  \multicolumn{1}{|c}{0} & 0 & 0 & 0 & 0 & 0 & \multicolumn{1}{|c}{1} & 1   \\
1  & 1 & 1 & 1 &  \multicolumn{1}{|c}{0} & 0 &  0 & 0 & 0 & 0 & \multicolumn{1}{|c}{1} & 1 \\
\end{pmatrix}.
\]
For $\alpha=1,2,3,4$ take
\[
\I_\alpha=\J_\alpha = \{1,2,3,4\} \cup \{4+2\alpha-1,\, 4+2\alpha\}
\]
and let $\T=\bigcup_{\alpha=1}^4\I_\alpha \times \J_\alpha$.
Then 
\[
|\T \setminus \Omega| = 3 < 4= \sum_{\alpha=1}^4 (|\I_\alpha|-r)(|\J_\alpha|-r)
\]
and it follows by Theorem \ref{thm:dependent_Asche} that $\Omega$ is dependent. 

To compute $\height(I_\T)$  let $p_\alpha$ be the 6-minor given by  $\I_\alpha \times \J_\alpha$, for $\alpha=1,2,3,4$. 
Any term in $p_\alpha$ is divisible by at least two variables from the set $Z_0=\{z_{ij} \ | \ 1 \le i \le 4, 1\le j \le 4 \} $. 
Consider a DegRevLex term order $\succ$ where the variables $Z_0$ are the smallest among the variables. We claim that is possible to order the remaining variables such that 
\begin{align*}
    p_1= & \ z_{1,6}z_{2,5}z_{5,2}z_{6,1}(z_{3,3}z_{4,4}-z_{3,4}z_{4,3}) \ + \ \text{terms of smaller order} \\
    p_2= & \ z_{3,8}z_{4,7}z_{7,2}z_{8,1}(z_{1,3}z_{2,4}-z_{1,4}z_{2,3}) \ + \ \text{terms of smaller order} \\
    p_3= & \ z_{1,10}z_{2,9}z_{9,4}z_{10,3}(z_{3,1}z_{4,2}-z_{4,1}z_{3,2}) \ + \ \text{terms of smaller order} \\
    p_4= & \ z_{3,12}z_{4,11}z_{11,4}z_{12,3}(z_{1,1}z_{2,2}-z_{1,2}z_{2,1}) \ + \ \text{terms of smaller order}. 
\end{align*}
Indeed, this is obtained by choosing 
\[
z_{1,6},\,z_{2,5},\,z_{5,2},\,z_{6,1},\, z_{3,8},\,z_{4,7},\,z_{7,2},\,z_{8,1},\, z_{1,10},\,z_{2,9},\,z_{9,4},\,z_{10,3},\, z_{3,12},\,z_{4,11},\,z_{11,4},\,z_{12,3}
\]
as the greatest variables. The leading terms of $p_1$, $p_2$, $p_3,$ and $p_4$  w.\,r.\,t.\ $\succ$ are then supported on distinct sets of variables. 
As a consequence, the leading terms of  $p_1, \ldots, p_4$ form a regular sequence of $\k[Z]$. It then follows from Proposition 1.2.12 in \cite{bruns2022determinants} (and the remark that follows it) that $p_1, \ldots,p_4$ is also a regular sequence in $\k[Z]$, whence $\height(I_{\mathcal{T}}) = 4$. 
\end{example}

As all our examples of dependent sets are detected by Theorem \ref{thm:height}, we conjecture that all dependent sets have this property. 

\begin{conjecture}\label{conj:height}
A set $\Omega \subseteq [m] \times [n]$ is a dependent set of the matroid $\mM(r,[m]\times [n])$ if and only if there is a set   $\T \subseteq [m] \times [n]$ such that
\[
\height(I_\T) > |\T \setminus \Omega|. 
\]
\end{conjecture}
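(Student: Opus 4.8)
We outline a possible approach to Conjecture~\ref{conj:height}; the forward implication is already Theorem~\ref{thm:height-determinantal}, so the work lies entirely in the converse, namely that every dependent set admits a certifying $\T$.

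\emph{Step 1: reduce to circuits.} If $\Omega$ is dependent it contains a circuit $C$, and since $C\subseteq\Omega$ we have $|\T\setminus\Omega|\le|\T\setminus C|$ for every $\T$; hence any $\T$ certifying $C$ also certifies $\Omega$, and it suffices to treat circuits. For a circuit $C$ we have $\rank(C)=|C|-1$, so $E_C$ is a height-one prime of the polynomial ring $\k[Z_C]$, hence principal, say $E_C=(f)$ with $f$ irreducible; and writing $F:=\cl(C)$ for the matroid closure of $C$ in $\mM(r,[m]\times[n])$, the ideal $E_F$ is prime of height $|F|-\rank(F)=|F\setminus C|+1$, with $E_C=E_F\cap\k[Z_C]$.

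\emph{Step 2: locate the certifying sets.} If $\height(I_\T)>|\T\setminus C|=|\T\setminus(\T\cap C)|$, then Theorem~\ref{thm:height-determinantal}, applied to the subset $\T\cap C$ with the same $\T$, shows that $\T\cap C$ is dependent, whence $\T\cap C=C$ by minimality of the circuit; and then $\height(I_\T)\ge|\T|-\rank(C)$ together with $\height(I_\T)\le\height(E_\T)=|\T|-\rank(\T)$ forces $\rank(\T)=\rank(C)$, i.e.\ $C\subseteq\T\subseteq\cl(C)$. Conversely, for any such $\T$ one has $\height(E_\T)=|\T|-\rank(C)=|\T\setminus C|+1$, so $\T$ is certifying exactly when $\height(I_\T)=\height(E_\T)$, the inequality $\le$ being automatic from $I_\T\subseteq E_\T$. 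Thus the conjecture is equivalent to: for every circuit $C$ there is a $\T$ with $C\subseteq\T\subseteq\cl(C)$ and $\height(I_\T)=\height(E_\T)$.

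\emph{Step 3: the candidate $\T=\cl(C)$.} For this choice the task reduces to the single lower bound $\height(I_{\cl(C)})\ge|\cl(C)\setminus C|+1$. I would try to prove it by exhibiting inside the region $\cl(C)$ a family of $|\cl(C)\setminus C|+1$ windows of size $(r+1)\times(r+1)$ whose minors form a regular sequence in $\k[Z]$; by Proposition~1.2.12 in \cite{bruns2022determinants} it suffices to find a term order under which their leading terms are supported on pairwise disjoint sets of variables, the device already used by hand in Examples~\ref{ex:rank4_dep_ladder} and~\ref{ex:rank5_dep}. The meaning of $\cl(C)$---the cells whose entries are algebraically forced by the $C$-entries of a generic rank-$r$ matrix---should be what supplies enough windows, each new cell of $\cl(C)\setminus C$ contributing, through the relation $f$, a fresh window; when $\cl(C)$ is a one-sided ladder one may instead invoke the height formulas of \cite{Gorla} directly, as in Example~\ref{ex:rank4_dep_ladder}. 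An alternative is induction on $r$ and on $\min\{m,n\}$, taking the cases $r=1$ and $r\ge\min\{m,n\}-2$ of Proposition~\ref{prp:conjecture} as base cases and reducing via deletion/contraction-type moves; and a positive answer to Question~\ref{que:height_ineq} would, through Theorem~\ref{thm:height-determinantal}, immediately subsume every dependent set produced by Theorem~\ref{thm:dependent_Asche}.

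\emph{Main obstacle.} The hard part will be controlling $\height(I_\T)$. Whereas $\height(E_\T)=|\T|-\rank(\T)$ is a matroid invariant, $I_\T$ is an ideal of minors on a region that is in general neither a rectangle nor a ladder, for which no height formula is available, and the height can genuinely fall below $\height(E_\T)$: Example~\ref{ex:height_formula} already shows a $\T$ where overlapping windows make $\height(I_\T)$ strictly smaller than the naive count. The crux is therefore to rule such a drop out for the regions $\cl(C)$ coming from circuits---or, when it does occur, to find a smaller $\T$ with $C\subseteq\T\subseteq\cl(C)$ still satisfying $\height(I_\T)=\height(E_\T)$. This is aggravated by the fact that for $r+1<\min\{m,n\}$ the $(r+1)$-minors are not a universal Gr\"obner basis, so the regular-sequence argument must be arranged with a term order chosen separately for each $\T$; proving that a suitable order always exists for closures of circuits is, in effect, the combinatorial core of the conjecture, and is closely related to the inequality in Question~\ref{que:height_ineq}.
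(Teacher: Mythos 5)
The statement you are addressing is Conjecture~\ref{conj:height}, which the paper itself leaves open: only the cases $r=1$ and $r\ge\min\{m,n\}-2$ are settled there (Proposition~\ref{prp:conjecture}), and your proposal, by your own admission, does not close the general case either. That said, the parts of your outline that are actually argued are correct and go somewhat beyond what the paper records. The forward implication is indeed Theorem~\ref{thm:height-determinantal}, and the reduction to circuits is valid since a certifying $\T$ for a circuit $C\subseteq\Omega$ certifies $\Omega$ as well. Your Step~2 is a genuine sharpening: from $\height(I_\T)>|\T\setminus C|$ one does get $C\subseteq\T$ by applying Theorem~\ref{thm:height-determinantal} to $\T\cap C$ and invoking minimality of the circuit, and the comparison $\height(I_\T)\le\height(E_\T)=|\T|-\rank(\T)$ then pins $\T$ between $C$ and $\cl(C)$ and shows that certification is equivalent to the single equality $\height(I_\T)=\height(E_\T)$. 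This reformulation---every circuit admits a $\T$ with $C\subseteq\T\subseteq\cl(C)$ on which the ideal of supported minors attains the full height $|\T\setminus C|+1$---is a clean statement of what the conjecture really asks, and is worth recording.

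The gap is Step~3, and it is precisely the open core of the problem rather than a technicality. You give no argument that $\height(I_{\cl(C)})=|\cl(C)\setminus C|+1$, nor that some intermediate $\T$ achieves the required equality. The proposed device (windows of size $(r+1)\times(r+1)$ whose leading terms are disjointly supported, hence form a regular sequence) is only carried out in the paper for specific hand-built configurations (Examples~\ref{ex:rank4_dep_ladder} and~\ref{ex:rank5_dep}), and Example~\ref{ex:height_formula} shows that for overlapping regions the height of $I_\T$ can genuinely fall below the window count, so nothing a priori prevents the same collapse for $\T=\cl(C)$; the heuristic that each cell of $\cl(C)\setminus C$ contributes a ``fresh window'' through the circuit polynomial $f$ is not substantiated. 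Note also that the paper's proof of the known cases does not go through closures at all: for $r=1$ and $r\ge\min\{m,n\}-2$ it uses the explicit combinatorial description of circuits (cycles, respectively complete bipartite graphs, after the reduction of Lemma~\ref{lemma:indep_reduce}) to verify the counting condition of Lemma~\ref{lemma:size_dep} directly, i.e.\ it always exhibits a certifying \emph{rectangle} $\I\times\J$. To make your programme work you would need either a positive answer to Question~\ref{que:height_ineq} for closures of circuits, or an independent lower bound on $\height(I_{\cl(C)})$, and neither is supplied.
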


We close this section by proving that our methods detect all dependent sets in the cases $r=1$, $\min(m,n)-2 $ and $\min(m,n)-1$. As a consequence
Conjecture \ref{conj:height} is true in these cases.

\begin{proposition} \label{prp:conjecture}
Theorem \ref{thm:height} and Theorem \ref{thm:dependent_Asche} both detect all dependent sets in the matroid $\mM(r,[m]\times [n])$ if $r=1$ or  $r \ge \min(m,n)-2 $.
\end{proposition}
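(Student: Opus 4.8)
The plan is to handle the three cases $r=1$, $r=\min(m,n)-1$, and $r=\min(m,n)-2$ separately, in each case invoking the known combinatorial characterizations of the matroid $\mM(r,[m]\times[n])$ from \cite{singer2010uniqueness,Tsakiris-AMS-2023} and matching a detected dependent set to a witness $\T$ (or a union of blocks $\I_\alpha\times\J_\alpha$) of the shape required by Theorem \ref{thm:height-determinantal} (resp.\ Theorem \ref{thm:dependent_Asche}). Throughout I would freely use Lemma \ref{lemma:indep_reduce} to reduce to an $\Omega$ in which every column (equivalently, after transposing, every row) has more than $r$ ones, and I would use Lemma \ref{lemma:size_dep} / Remark \ref{rmk:theorems_detect_basic_dep} to dispose of ``oversized'' dependent sets, i.e.\ those $\Omega$ containing a submatrix $\I\times\J$ with $|(\I\times\J)\setminus\Omega|<(|\I|-r)(|\J|-r)$, since those are already detected by both theorems with a single block.

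For $r=1$: here dependence in $\mM(1,[m]\times[n])$ is governed by a simple bipartite-connectivity criterion \cite{singer2010uniqueness} — a set $\Omega$ with every row and column nonempty is a base iff the bipartite graph on $[m]\sqcup[n]$ with edge set $\Omega$ is connected with exactly $m+n-1$ edges (a spanning tree), and $\Omega$ is independent iff that graph is a forest. So a dependent $\Omega$ (after the Lemma \ref{lemma:indep_reduce} reduction, where every column has $\ge 2$ edges) either has too many edges relative to the number of vertices it touches, or contains a cycle; a cycle of length $2t$ spans $t$ rows and $t$ columns and uses $2t$ edges on a $t\times t$ submatrix $\I\times\J$, and $2t > 1\cdot(2t-1) = r(|\I|+|\J|-r)$, so Lemma \ref{lemma:size_dep} applies with $\T=\I\times\J$ and we are done by Remark \ref{rmk:theorems_detect_basic_dep}. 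I would double-check that after removing the oversized case, a dependent forest-free configuration must literally contain a cycle.

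For $r=\min(m,n)-1$ and $r=\min(m,n)-2$: say $\min(m,n)=n$, so $r=n-1$ or $r=n-2$. The characterization of bases in these extreme cases is spelled out in \cite{Tsakiris-AMS-2023} (equivalence of the SLMF conditions of Theorems \ref{theorem:SLMF-sufficient} and \ref{theorem:SLMF-necessary}); concretely, after the Lemma \ref{lemma:indep_reduce} reduction, the failure of $\Omega$ to be a relaxed $(r,r,m)$-SLMF must be witnessed by some $\I\subseteq[m]$ with $|\I|>r$ violating \eqref{eq:rrmSLMF}, and because $n-r\in\{1,2\}$ the left side of \eqref{eq:rrmSLMF} sums over only $n$ columns each contributing at most $|\I|-r$, so the violation forces a concrete small set of columns $\J$ on which $\I\times\J$ is heavily populated. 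I would then show that such a violation translates, column by column, into a submatrix (or a union of at most two submatrices, in the $r=n-2$ case where two ``deficient'' columns can interact) meeting the hypotheses of Lemma \ref{lemma:size_dep} or of Theorem \ref{thm:dependent_Asche} with $\T=\bigcup\I_\alpha\times\J_\alpha$; the arithmetic $|\T\setminus\Omega| < \sum(|\I_\alpha|-r)(|\J_\alpha|-r)$ should fall out of the numerical violation of \eqref{eq:rrmSLMF} together with $|\J_\alpha|-r\le 2$. For Theorem \ref{thm:height} one notes $\height(I_{\I\times\J}) = (|\I|-r)(|\J|-r)$ and, when two blocks are needed, that they can be chosen so that their union is a ladder (or that the relevant minors have leading terms on disjoint variable sets, as in Example \ref{ex:rank5_dep}), so the height of $I_\T$ is the sum of the two heights.

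I expect the main obstacle to be the $r=\min(m,n)-2$ case: here two columns can be simultaneously ``under-filled'' and the dependent configuration need not sit inside a single combinatorial rectangle, so one genuinely needs the two-block version (Theorem \ref{thm:dependent_Asche}) and must verify both the intersection condition \eqref{eq:condition_dep_Asche} — that the two chosen blocks overlap in a submatrix with one side of size $\le r$ — and, for the Theorem \ref{thm:height} route, that $\height(I_\T)$ equals (or exceeds) the sum of the block heights, which is exactly the content of Question \ref{que:height_ineq} in this restricted setting. The bookkeeping converting the SLMF inequality into the exact choice of $\I_\alpha,\J_\alpha$ and the verification that the ``missing'' entries of $\Omega$ are few enough is routine but delicate; the conceptual heart is simply that in these extreme ranks the only obstructions to being a base are the obvious rectangular ones.
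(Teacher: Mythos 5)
Your treatment of $r=1$ and $r=\min(m,n)-1$ is essentially the paper's argument: reduce to circuits, observe that for $r=1$ a circuit is a cycle supported on some $\I \sqcup \J$ with $|\Omega|=|\I|+|\J|>|\I|+|\J|-1$, and that for $r=m-1$ (say $m\le n$) a circuit is a full block $\I\times\J$ with $|\I|=|\J|=m$ and $|\Omega|=m^2>m^2-1$; in both cases Lemma \ref{lemma:size_dep} applies with a single block and Remark \ref{rmk:theorems_detect_basic_dep} finishes. (For $r=m-1$ the paper uses the circuit characterization --- circuits are exactly the $K_{m,m}$'s --- rather than arguing from a violated SLMF inequality; this matters because the condition of Theorem \ref{theorem:SLMF-necessary} characterizes bases, i.e.\ sets of size exactly $r(m+n-r)$, and does not directly describe smaller dependent sets.)

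The genuine gap is in the case $r=\min(m,n)-2$. You anticipate needing the two-block version of Theorem \ref{thm:dependent_Asche} and, for the Theorem \ref{thm:height} route, the additivity of heights from Question \ref{que:height_ineq} ``in this restricted setting'' --- but that question is left open in the paper, so a proof resting on it is incomplete, and in fact none of this machinery is needed. The paper's argument runs the other way: after applying Lemma \ref{lemma:indep_reduce} (and its transposed version) to delete every row and column meeting $\Omega$ in at most $r$ entries, one may assume every row and column has more than $r$ entries, and one then proves the \emph{positive} statement that every such reduced $\Omega$ of size exactly $r(m+n-r)$ is a base. This is a short check of the relaxed $(r,r,m)$-SLMF inequalities: for $r=m-2$ only $|\I|=m$ and $|\I|=m-1$ occur in \eqref{eq:rrmSLMF}, the fill condition turns each $\max(\cdot,0)$ into $|\Omega\cap(\I\times\{j\})|-r$, and for $\I=[m]\setminus\{i\}$ the required inequality follows from $|\Omega\cap(\{i\}\times[n])|>r$. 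Consequently a reduced dependent set must have size strictly greater than $r(m+n-r)$, i.e.\ it is ``oversized'' with the single block $\T=[m]\times[n]$, and Lemma \ref{lemma:size_dep} applies once more. The hard case is thus resolved without any multi-block witness; your sketch leaves exactly this step as ``routine but delicate bookkeeping'' and steers it toward a route that would require resolving an open question.
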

\begin{proof}
We will prove, for all three cases, that if $\Omega$ is dependent then 
\begin{equation}\label{eq:proof_conj_dep_cond}
\text{there exists $\I$ and $\J$ such that $|\Omega \cap ( \I \times \J)| >r(|\I|+|\J|-r)$.}    
\end{equation}
 The claim then follows from Lemma \ref{lemma:size_dep} and Remark \ref{rmk:theorems_detect_basic_dep}. Note that it is enough to prove \eqref{eq:proof_conj_dep_cond} for circuits, i.\,e.\ dependent sets that are minimal with respect to inclusion.

Without loss of generality we assume throughout the proof that $m \le n$. 
In the cases $r=1$ \cite{singer2010uniqueness} and $r=m-1$ \cite{Tsakiris-AMS-2023} the circuits of the matroid $\mM(r, [m] \times [n])$ can be described in terms of bipartite graphs. To a set $\Omega \subseteq [m] \times [n]$, we associate a bipartite graph $G_\Omega$ on vertex set $[m] \sqcup [n]$ and edge set the pairs $\Omega$. The incidence matrix of the graph $G_\Omega$ is the same as the incidence matrix of $\Omega$. The circuits of the matroid $\mM(1, [m] \times [n])$ are the sets $\Omega$ for which the graph $G_\Omega$ is a cycle. In the case $r=m-1$ a set $\Omega$ is a circuit if and only if $G_\Omega$ is a complete bipartite graph $K_{m,m}$.    

{\underline{Assume $r=1$.}}
Let $\Omega$ be a circuit, and take the smallest $\I$ and $\J$ for which $\Omega \subseteq \I \times \J$. The graph $G_\Omega$ is a cycle, so the number of vertices equals the number of edges. The number of vertices in $G_{\Omega}$ is $|\I|+|\J|$, and the number of edges is $|\Omega|$. We see that \eqref{eq:proof_conj_dep_cond} holds as
\[
|\Omega|=|\I|+|\J|>1 \cdot(|\I|+|\J|-1) = r(|\I|+|\J|-r).
\]

{\underline{Assume $r=m-1$.}}
Let $\Omega$ be a circuit. Then $G_\Omega$ is a complete bipartite graph $K_{m,m}$, so we have $\Omega = \I \times \J$ with $|\I|=|\J|=m$. As 
\[
r(|\I|+|\J|-r)=(m-1)(m+1)=m^2-1=|\Omega|-1
\]
we see that \eqref{eq:proof_conj_dep_cond} holds. 

{\underline{Assume $r=m-2$.}}
By Lemma \ref{lemma:indep_reduce} we may remove every column $j$ for which $|\Omega \cap ([m] \times \{j\})|\le r$. Note that the transpose of $\Omega$ 
\[
\Omega^{\top}=\{(j,i) \ | \ (i,j) \in \Omega \} \subseteq [n] \times [m] 
\]
is independent in the matroid $\mM(r,[n] \times [m])$ if and only if $\Omega$ is independent. If we can remove $n-r-1$ columns from $\Omega$, then we may transpose and proceed as in the case $r=m-1$ above. By transposing and applying Lemma \ref{lemma:indep_reduce}, we may remove any row $i$ for which $|\Omega \cap (\{i\} \times [n])|\le r$ and the refer to the case $r=m-1$. After repeated use of Lemma \ref{lemma:indep_reduce} to remove rows and columns we may assume that
\begin{equation}\label{eq:proof_conj_reduced}
|\Omega \cap ([m] \times \{j\})|>r \quad \text{and} \quad |\Omega \cap (\{i\} \times [n])|>r \quad \text{for every $i$ and $j$.}
\end{equation}
After this reduction process we could end up in the situation where $|\Omega|>r(n-m-r)$, in which case \eqref{eq:proof_conj_dep_cond} holds and we are done. 

Claim: Under the condition \ref{eq:proof_conj_reduced}, every $\Omega$ with $|\Omega|=r(n-m-r)$ a base. 

If we can prove this claim we are done, as any $\Omega$ with $|\Omega| < r(n-m-r)$ then can be extended to a base. 

Proof of the claim:
It is proved in \cite{Tsakiris-AMS-2023} that the bases of the matroid $\mM(m-2,[m]\times[n])$ are sets $\Omega$ of size $r(n+m-r)$ that are so called relaxed $(r,r,m)$-SLMF (Definition \ref{definition:SLMF}). 
Since $r=m-2$ we only need to consider $|\I|=m$ and $|\I|=m-1$ in \eqref{eq:rrmSLMF}. For any such $\I$ the assumption $|\Omega \cap ([m] \times \{j\})|>r$ implies $|\Omega \cap (\I \times \{j\})|\ge r$. Hence \eqref{eq:rrmSLMF} simplifies to 
\begin{equation}\label{eq:rrmSLMF'}
\sum_{j=1}^n |\Omega \cap (\I \times \{j\})| \le r(|\I|+n-r) \quad \text{with equality for} \ \I=[m].
\end{equation}
In the case $\I=[m]$ the left hand side of the inequality \eqref{eq:rrmSLMF'} counts the elements of $\Omega$, so \eqref{eq:rrmSLMF} for $\I=[m]$ is equivalent to $|\Omega|=r(m+n-r)$. Let's now consider $\I=[m]\setminus \{i\}$ for some $i$. Then 
\[
\sum_{j=1}^n |\Omega \cap (\I \times \{j\})| = |\Omega \cap (\I \times [n])| = |\Omega|- |\Omega \cap (\{i\} \times [n])|.
\]
Given that $|\Omega|=r(m+n-r)$ the assumption \eqref{eq:proof_conj_reduced} implies 
\[
|\Omega|- |\Omega \cap (\{i\} \times [n])| < r(m+n-r)-r=r(m+n-1-r)
\]
and hence \eqref{eq:rrmSLMF'} holds for all $\I=[m]\setminus \{i\}$. This proves the claim, and the proof is complete. 
\end{proof}

\section{Unique completability} \label{section: Unique Completability}

As already noted in \S \ref{section:Background and notation}, when $\Omega$ is a base of the determinantal matroid, the (scheme-theoretic) generic fiber of the projection morphism $\pi_\Omega$, which takes an $m \times n$ matrix of rank $\le r$ over $\Bbbk$ to the set of its entries indexed by $\Omega$, has dimension zero. This is equivalent to saying that there are finitely many ways to complete those entries in $\k$ (or even in the algebraic closure $\K$ of $\k$) to a matrix of rank $\le r$. In this section we undertake the question of \emph{unique completability}, which concerns the study of those bases (and more generally any $\Omega$), for which $X$ is the only such completion. Throughout this section we will assume hat $\k$ has characteristic zero.

\subsection{Generalities}

We begin by showing that Definitions \ref{dfn:uniquely-completable} and \ref{dfn:generically-uniquely-completable-k} are equivalent, and in fact are equivalent to a purely algebraic statement. Even though Proposition \ref{prp:generically-uniquely-completable-k} might be natural to experts, we have nevertheless taken the liberty of writing a detailed proof. 

\begin{proposition} \label{prp:generically-uniquely-completable-k}
The following are equivalent.
\begin{enumerate}[label=(\roman*)]
\item $\Omega$ is generically uniquely completable at rank $r$ over $\k$.
\item $\k(\bar{Z}_\Omega)=\k(\bar{Z})$.
\item There exists a non-empty open set $U \subseteq \Spec \k[\bar{Z}]$ such that $\pi_\Omega^{-1} \left( \pi_\Omega(X)\right) = \{X\}$ as schemes, for every $X \in U$ ($X$ need not be $\k$-rational or closed). 
\item The morphism $\bar{\pi}_\Omega: \Spec \k[\bar{Z}] \rightarrow \Spec \k[\bar{Z}_\Omega]$ induced by the inclusion $\k[\bar{Z}_\Omega] \subseteq \k[\bar{Z}]$ is birational.
\end{enumerate}
\end{proposition}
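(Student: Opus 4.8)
The plan is to prove the four conditions equivalent by establishing the cycle (iv) $\Leftrightarrow$ (ii) $\Rightarrow$ (iii) $\Rightarrow$ (i) $\Rightarrow$ (ii). The equivalence (ii) $\Leftrightarrow$ (iv) is essentially the definition of birationality: the ring map $\k[\bar{Z}_\Omega] \hookrightarrow \k[\bar{Z}]$ that induces $\bar{\pi}_\Omega$ is injective, so $\bar{\pi}_\Omega$ is a dominant morphism of integral schemes of finite type over $\k$, and the extension of function fields it induces is precisely the inclusion $\k(\bar{Z}_\Omega) \subseteq \k(\bar{Z})$; this inclusion is an equality if and only if $\bar{\pi}_\Omega$ restricts to an isomorphism between dense open subschemes, that is, if and only if $\bar{\pi}_\Omega$ is birational.

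For (ii) $\Rightarrow$ (iii) I would make the birationality explicit. Assuming $\k(\bar{Z}_\Omega) = \k(\bar{Z})$, each $\bar{z}_{ij}$ is a quotient of elements of $\k[\bar{Z}_\Omega]$, so clearing all denominators yields a single $f_0 \in \k[\bar{Z}_\Omega] \setminus \{0\}$ with $\k[\bar{Z}][f_0^{-1}] = \k[\bar{Z}_\Omega][f_0^{-1}]$. Put $U = \Spec \k[\bar{Z}][f_0^{-1}]$, a dense open of $\Spec \k[\bar{Z}]$; then $U = \bar{\pi}_\Omega^{-1}(\Spec \k[\bar{Z}_\Omega][f_0^{-1}])$ and $\bar{\pi}_\Omega|_U$ is an isomorphism onto its image, so for every $X \in U$ the whole fibre $\bar{\pi}_\Omega^{-1}(\bar{\pi}_\Omega(X))$ lies in $U$ and equals the reduced point $\{X\}$. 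Since $\pi_\Omega$ is the composite of $\bar{\pi}_\Omega$ with the closed immersion $\Spec \k[\bar{Z}_\Omega] \hookrightarrow \mathbb{A}^\Omega_\k$, and the fibres of a closed immersion are single reduced points, $\pi_\Omega^{-1}(\pi_\Omega(X)) = \bar{\pi}_\Omega^{-1}(\bar{\pi}_\Omega(X))$ scheme-theoretically for every $X$; hence (iii) holds with this $U$.

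For (iii) $\Rightarrow$ (i), write $\Spec \k[\bar{Z}] \setminus U = V(J)$ for an ideal $J$ of $\k[\bar{Z}]$, lift a finite generating set of $J$ to polynomials in $\k[Z]$, and discard any that lie in $I_{r+1}(Z)$; the survivors $g_1, \dots, g_t \in \k[Z] \setminus I_{r+1}(Z)$ still cut out $V(J)$ (and if none survive, then $U = \Spec \k[\bar{Z}]$ and any nonzero polynomial outside $I_{r+1}(Z)$ will serve). If $X \in \k^{m \times n}$ satisfies $\rank X \le r$ and $g_i(X) \ne 0$ for some $i$, then $X$, viewed as a $\k$-rational point of $\Spec \k[\bar{Z}]$, lies in $U$, so by (iii) the scheme $\pi_\Omega^{-1}(\pi_\Omega(X))$ equals $\{X\} = \Spec \k$, whose only $\K$-point is $X$ itself. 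As the $\K$-points of $\pi_\Omega^{-1}(\pi_\Omega(X))$ are exactly the matrices $Y \in \K^{m \times n}$ with $\rank Y \le r$ and $y_{ij} = x_{ij}$ for all $(i,j) \in \Omega$, this is precisely Definition~\ref{dfn:generically-uniquely-completable-k}.

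The substantive step, and the only place where $\operatorname{char}\k = 0$ is used, is (i) $\Rightarrow$ (ii), which I would prove contrapositively: if $\k(\bar{Z}_\Omega) \subsetneq \k(\bar{Z})$ then (i) fails. Localizing $\k[\bar{Z}]$ at $S = \k[\bar{Z}_\Omega] \setminus \{0\}$ realizes the generic fibre of $\bar{\pi}_\Omega$ as $\Spec B$, where $B = \k(\bar{Z}_\Omega)[\bar{Z}]$ is a finitely generated $\k(\bar{Z}_\Omega)$-domain with fraction field $\k(\bar{Z})$. If $\operatorname{tr.deg.}_{\k(\bar{Z}_\Omega)} \k(\bar{Z}) > 0$, then $\dim B \ge 1$, so over a dense open of $\Spec \k[\bar{Z}_\Omega]$ every fibre of $\bar{\pi}_\Omega$ is positive-dimensional, and every $\k$-rational $X$ in the corresponding dense open of $\Spec \k[\bar{Z}]$ has infinitely many rank-$\le r$ completions over $\K$. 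If instead the transcendence degree is $0$, then the zero-dimensional domain $B$ is a field, so $B = \k(\bar{Z})$ and $d := [\k(\bar{Z}) : \k(\bar{Z}_\Omega)] > 1$; since $\operatorname{char}\k = 0$ this extension is separable, and by generic \'etaleness there is $f_0 \in \k[\bar{Z}_\Omega] \setminus \{0\}$ over whose distinguished open $D(f_0)$ the morphism $\bar{\pi}_\Omega$ is finite \'etale of degree $d$, whence every $\k$-rational point of the dense open $\bar{\pi}_\Omega^{-1}(D(f_0))$ has a rank-$\le r$ completion over $\K$ different from itself. In either case there is a dense open $U' \subseteq \Spec \k[\bar{Z}]$ none of whose $\k$-rational points is uniquely completable; given any finite list $g_1, \dots, g_t \in \k[Z] \setminus I_{r+1}(Z)$, the open locus where some $g_i$ is nonzero meets $U'$, and --- using that $\k$ is infinite and the determinantal variety is unirational over $\k$, hence has Zariski-dense $\k$-rational points --- contains a $\k$-rational $X$ with $\rank X \le r$, $g_i(X) \ne 0$ for some $i$, and a second rank-$\le r$ completion over $\K$; thus no list witnesses (i). I expect this implication to be the main obstacle, precisely because of the passage to a finite \'etale cover and the essential role of separability: in positive characteristic a purely inseparable proper extension $\k(\bar{Z}_\Omega) \subsetneq \k(\bar{Z})$ would still force generically unique $\K$-completions, so the statement as phrased would fail. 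The remaining three implications are formal, resting only on localization and on the factorization of $\pi_\Omega$ through the closed immersion of $\Spec \k[\bar{Z}_\Omega]$.
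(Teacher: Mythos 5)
Your proof is correct, and for the hard implication it takes a genuinely different route from the paper's. The formal implications (ii)\,$\Leftrightarrow$\,(iv), (ii)\,$\Rightarrow$\,(iii) and (iii)\,$\Rightarrow$\,(i) match the paper in substance (the paper proves (ii)\,$\Rightarrow$\,(iii) by localizing at a prime and cites the Stacks Project for (iii)\,$\Leftrightarrow$\,(iv), whereas you invert a single $f_0$ globally and argue (ii)\,$\Leftrightarrow$\,(iv) from the definition of birationality --- both are fine). The real divergence is in (i)\,$\Rightarrow$\,(ii). The paper argues directly: it base-changes to $\K$, shows the generic fiber is zero-dimensional via a Jacobson-ring argument, then for each missing entry $(i,j)$ uses elimination theory and Gr\"obner bases to exhibit the closure of the image of $\pi_{\Omega',\K}$ as a hypersurface cut out by the minimal polynomial $\mu_{ij}$ over a suitable localization, invokes Chevalley's theorem plus an integral-extension argument to get candidate points into the actual image, uses the resultant/discriminant (this is where $\operatorname{char}\k=0$ enters) to separate the $d_{ij}$ roots, concludes $d_{ij}=1$, and finally descends the field equality from $\K$ to $\k$ via the Grothendieck--Sharp dimension formula. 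You instead argue contrapositively, splitting on the transcendence degree of $\k(\bar Z)$ over $\k(\bar Z_\Omega)$: positive-dimensional generic fibers give infinitely many completions, and in the algebraic case generic \'etaleness (again the only use of characteristic zero) gives $d>1$ distinct $\K$-points in the fibers over a dense open. This buys you two things: the entire $\K$-to-$\k$ descent disappears because you never leave $\k$ except at the level of individual fibers, and the Gr\"obner-theoretic bookkeeping is replaced by one application of generic flatness/\'etaleness. The price is that your contrapositive genuinely needs Zariski-density of $\k$-rational points on the determinantal variety in order to defeat an arbitrary candidate list of polynomials from Definition~\ref{dfn:generically-uniquely-completable-k}; you correctly supply this via unirationality (the parametrization $(A,B)\mapsto AB$) and the infinitude of $\k$, a point the paper's direct argument never has to confront. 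Both proofs are valid; yours is shorter and more conceptual, the paper's is more explicit about the open sets it constructs.
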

\begin{proof}
\emph{(i) $\Rightarrow$ (ii)} We claim that the generic fiber of $\pi_{\Omega,\K}: \Spec \K[\bar{Z}] \rightarrow \Spec \K[\bar{Z}_\Omega]$ is zero-dimensional; here $\pi_{\Omega,\K}$ is induced by $\varphi_\Omega \otimes_\k \K$. The reason is that any fiber of $\pi_{\Omega,\K}$ is the spectrum of a Jacobson ring, and so any prime ideal of it is the intersection of maximal ideals. But by hypothesis there exist fibers which have only one closed point, and so these fibers are necessarily zero-dimensional. As the minimum fiber dimension is attained on an open set, the generic fiber must also be zero-dimensional. 

In particular, the fiber of $\pi_{\Omega,\K}$ over the generic point of $\Spec \K[\bar{Z}]$ is zero-dimensional. As this generic point maps to $\ker(\varphi_{\Omega,\K})$, we have that this fiber is the spectrum of the fiber ring $\K[\bar{Z}] \otimes_{\K[\bar{Z}_\Omega]}\K(\bar{Z}_\Omega)$. Now $\K[\bar{Z}] \otimes_{\K[\bar{Z}_\Omega]} \K(\bar{Z}_\Omega)$ is an Artinian ring, and as it sits in the field $\K(\bar{Z})$ it must be a field itself. This field contains $\K[\bar{Z}]$ thus necessarily $\K[\bar{Z}] \otimes_{\K[\bar{Z}_\Omega]} \K(\bar{Z}_\Omega) = \K(\bar{Z})$. As $\K[\bar{Z}] \otimes_{\K[\bar{Z}_\Omega]} \K(\bar{Z}_\Omega)$ is a finitely generated $\K(\bar{Z}_\Omega)$-algebra coinciding with the field $\K(\bar{Z})$, Zariski's lemma gives that the field extension $\K(\bar{Z}_\Omega) \subseteq \K(\bar{Z})$ is finite. 

For $(i,j) \not\in \Omega$, let $\mu_{ij} \in \K(\bar{Z}_\Omega)[z_{ij}]$ be the minimal polynomial of $\bar{z}_{ij}$ over $\K(\bar{Z}_\Omega)$, and let $d_{ij}$ be its degree. We will show that $d_{ij}=1$. With $\Omega' = \Omega \cup \{i,j\}$, we write $\pi_{\Omega,\K}$ as the composite morphism
$$\pi_{\Omega,\K} : \Spec \K[\bar{Z}] \stackrel{\pi_{\Omega',\K}}{\longrightarrow} \underbrace{\Spec \K[Z_{\Omega'}]}_{\mathbb{A}^{\Omega'}_\K} \stackrel{\pi}{\rightarrow} \underbrace{\Spec \K[Z_{\Omega}]}_{\mathbb{A}^\Omega_\K},$$ 
where $\pi$ corresponds to the inclusion $\K[Z_\Omega] \subseteq \K[Z_{\Omega'}]$; on the level of $\K$-rational points we may think of $\pi$ as dropping the coordinate $(i,j)$. By elimination theory \cite{cox2015ideals}, the closure of the image of $\pi_{\Omega,\K}$ in $\mathbb{A}^\Omega_\K$ is defined by the elimination ideal $\K[Z_\Omega] \cap I_{r+1}(Z)$, and thus this closure is $\Spec \K[\bar{Z}_\Omega]$. Similarly, the closure of the image of $\pi_{\Omega',\K}$ in $\mathbb{A}^{\Omega'}_\K$ is generated by the elimination ideal $\K[Z_{\Omega'}] \cap I_{r+1}(Z)$, and it is $\Spec \K[\bar{Z}_{\Omega'}]$. Consider any lexicographic order on $Z$ that satisfies $z_{\alpha \beta} >z_{ij} > z_{\gamma \delta}$ for every $(\alpha, \beta) \not\in \Omega'$ and $(\gamma, \delta) \in \Omega$. Let us consider the members of a reduced Gr\"obner basis of $I_{r+1}(Z)$ with respect to that order that are supported on $Z_{\Omega'}$; among them call $g_1,\dots,g_s$ the polynomials that involve the variable $z_{ij}$ and $h_1,\dots,h_t$ the remaining ones. Then elimination theory asserts that $\K[Z_{\Omega'}] \cap I_{r+1}(Z) = (g_1,\dots,g_s,h_1,\dots,h_t)$ and $\K[Z_{\Omega}] \cap I_{r+1}(Z)  = (h_1,\dots,h_t)$, and so $$\K[\bar{Z}_\Omega] = \frac{\K[Z_\Omega]}{(h_1,\dots,h_t)}, \, \, \, \K[\bar{Z}_{\Omega'}] = \frac{\K[Z_{\Omega'}]}{(g_1,\dots,g_s,h_1,\dots,h_t)}.$$ 
Note here that we have an inclusion of integral domains
$$\K[\bar{Z}_\Omega] \subseteq \K[\bar{Z}_{\Omega'}] \subseteq \K[\bar{Z}].$$
Factoring $\pi_{\Omega',\K}$ and $\pi_{\Omega,\K}$ through the closures of their images in the respective affine spaces, we get the commutative diagram of morphisms

\begin{center}
\begin{tikzcd}
& \mathbb{A}_\K^{\Omega'} \arrow[r,"\pi"] & \mathbb{A}_\K^{\Omega} \\
\Spec \K[\bar{Z}] \arrow[r]\arrow[ur,"\pi_{\Omega',\K}"] & \underbrace{\Spec \K[\bar{Z}_{\Omega'}]}_{\mathbb{V}(g_1,\dots,g_s,h_1,\dots,h_t)} \arrow[r] \arrow[u] &  \Spec \underbrace{\K[\bar{Z}_{\Omega}]}_{\mathbb{V}(h_1,\dots,h_t)} \arrow[u], \label{eq:composite-morphisms-closures}
\end{tikzcd} 
\end{center}
in which the horizontal arrows are dominant morphisms, while the vertical arrows are closed immersions with the equations that define the corresponding closed subscheme appearing in the notation $\mathbb{V}(\cdot)$. The generic fiber of the composition of the two horizontal arrows is $\K[\bar{Z}] \otimes_{\K[\bar{Z}_\Omega]} \K(\bar{Z}_\Omega)$ and, as we saw above, it has dimension zero; hence 
$$ \dim \K[\bar{Z}_\Omega] = \dim \K[\bar{Z}_{\Omega'}] = \dim \K[\bar{Z}].$$

The kernel of the $\K[\bar{Z}_\Omega]$-algebra epimorphism $$\psi: \K[\bar{Z}_\Omega][z_{ij}] \twoheadrightarrow \K[\bar{Z}_\Omega][\bar{z}_{ij}]=\K[\bar{Z}_{\Omega'}], \, \, \, \, \, \, z_{ij}\mapsto \bar{z}_{ij}$$ 
is the ideal generated by the classes $\tilde{g}_1,\dots,\tilde{g}_s$ in $\K[\bar{Z}_\Omega][z_{ij}]$ of $g_1,\dots,g_s$. Localizing $\psi$ at the multiplicatively closed set $T = \K[\bar{Z}_\Omega] \setminus \{0\}$, we obtain $$\psi_T: \K(\bar{Z}_\Omega)[z_{ij}] \twoheadrightarrow \K(\bar{Z}_\Omega)[\bar{z}_{ij}] = \K(\bar{Z}_{\Omega'}).$$
The kernel of $\psi_{T}$ is the principal ideal generated by the minimal polynomial $\mu_{ij}$ of $\bar{z}_{ij}$ over $\K(\bar{Z}_\Omega)$. Hence in the ring $\K(\bar{Z}_\Omega)[z_{ij}]$ we have $\tilde{g}_\alpha = c_\alpha \mu_{ij}$ for some $c_\alpha \in \K(\bar{Z}_\Omega)[z_{ij}]$, for every $\alpha=1,\dots,s$. We may write each $c_\alpha$ as the quotient of a polynomial in $\K[\bar{Z}_\Omega][z_{ij}]$ by an element $\bar{f}_{\alpha} \in \K[\bar{Z}_\Omega]$. We may further write the $c_\alpha$'s in such a form so that $\bar{f}_{\alpha} = \bar{f}$ for every $\alpha=1,\dots,s$, where $\bar{f} \in \K[\bar{Z}_\Omega]$. Already then, the kernel of the localization  
$$\psi_{\bar{f}}: \K[\bar{Z}_\Omega]_{\bar{f}}[z_{ij}] \twoheadrightarrow \K[\bar{Z}_\Omega]_{\bar{f}}[\bar{z}_{ij}]$$ 
of $\psi$ at $\bar{f}$ is generated by $\mu_{ij}$. The scheme $\Spec \K[\bar{Z}_\Omega][z_{ij}]$ is the affine line $\mathbb{A}^1_{\K[\bar{Z}_\Omega]}$ over the closure of the image of $\pi_{\Omega,\K}$; it is the closed subscheme of $\mathbb{A}^{\Omega'}_\K$ defined by $h_1,\dots,h_t$. What we just established is that inside $\mathbb{A}^1_{\K[\bar{Z}_\Omega]}$ sits the closure of the image of $\pi_{\Omega',\K}$, and it is locally (away from the vanishing locus of $\bar{f}$) a hypersurface defined by $\mu_{ij}$. Incorporating this into the localization of diagram \eqref{eq:composite-morphisms-closures} gives a commutative diagram

\hspace{1in}
\begin{center}
\begin{tikzcd}
    & \mathbb{A}^{\Omega'}_\K \setminus \mathbb{V}(f) \arrow[r] & \mathbb{A}^{\Omega}_\K \setminus \mathbb{V}(f) \\
    & \underbrace{\Spec \K[\bar{Z}_\Omega]_{\bar{f}}[z_{ij}]}_{\mathbb{V}(h_1,\dots,h_t)} \arrow[u] \\
  \Spec \K[\bar{Z}]_{\bar{f}} \arrow[r] &  \underbrace{\Spec \K[\bar{Z}_{\Omega'}]_{\bar{f}}}_{\mathbb{V}(\mu_{ij})} \arrow[u] \arrow[r] & \underbrace{\Spec \K[\bar{Z}_\Omega]_{\bar{f}}}_{\mathbb{V}(h_1,\dots,h_t)} \arrow[uu]
\end{tikzcd}  
\end{center}
in which all horizontal arrows are dominant, and all vertical arrows are closed immersions. 

For $X=(x_{\alpha \beta })$ any closed point of $\Spec \K[\bar{Z}]_{\bar{f}}$, let $x_{ij}'$ be any root of the polynomial $\tilde{\mu}_{ij}$ of $\K[z_{ij}]$ that we obtain by substituting in $\mu_{ij}$ every $z_{\alpha \beta}$ by $x_{\alpha \beta}$ for all $(\alpha,\beta) \in \Omega$; then the closed point of $\mathbb{A}_\K^{\Omega'}$ defined by the data $\{x_{\alpha \beta}: \, (\alpha,\beta) \in \Omega\}$, $x_{ij}'$ is in the closure of the image of $(\pi_{\Omega',\K})_{\bar{f}}$. We proceed as follows in order to further localize so that such points are in fact in the image. By Chevalley's theorem, the image of $(\pi_{\Omega',\K})_{\bar{f}}$ is constructible; as it is dense in its closure $\Spec \K[\bar{Z}_{\Omega'}]_{\bar{f}}$, it must contain a non-empty open set of $\Spec \K[\bar{Z}_{\Omega'}]_{\bar{f}}$. Let $\mathcal{W}'$ be the complement of that open set. As $\Spec \K[\bar{Z}_{\Omega'}]_{\bar{f}}$ is integral, the dimension of $\mathcal{W}'$ is strictly smaller than the dimension of $\Spec \K[\bar{Z}_{\Omega'}]_{\bar{f}}$. Write $\mathcal{W}' = \Spec \K[\bar{Z}_{\Omega'}]_{\bar{f}} / J'$, where $J'$ is the ideal of $\K[\bar{Z}_{\Omega'}]_{\bar{f}}$ that defines $\mathcal{W}'$. Consider the inclusion of integral domains $\K[\bar{Z}_{\Omega}]_{\bar{f}} \subseteq \K[\bar{Z}_{\Omega'}]_{\bar{f}}$; this is an integral ring extension as $\bar{z}_{ij}$ is a root of the monic polynomial $\mu_{ij} \in \K[\bar{Z}_{\Omega}]_{\bar{f}}[z_{ij}]$. Hence with $J$ the contraction of $J'$ to $\K[\bar{Z}_{\Omega}]_{\bar{f}}$, we also have an integral extension of integral domains $\K[\bar{Z}_{\Omega}]_{\bar{f}}/J \subseteq \K[\bar{Z}_{\Omega'}]_{\bar{f}}/J'$. It follows that $\dim \K[\bar{Z}_{\Omega}]_{\bar{f}}/J = \dim \K[\bar{Z}_{\Omega'}]_{\bar{f}}/J'$. Now $\mathcal{W}=\Spec \K[\bar{Z}_{\Omega}]_{\bar{f}}/J$ is the closure of the image of $\mathcal{W}'$ in $\Spec \K[\bar{Z}_\Omega]$ (or equivalently the scheme-theoretic image), and as the dimensions of $\Spec \K[\bar{Z}], \, \Spec \K[\bar{Z}_{\Omega'}], \, \Spec \K[\bar{Z}_{\Omega}]$ agree, we infer that $\mathcal{W}$ is a closed subscheme of $\Spec \K[\bar{Z}_\Omega]$ of strictly smaller dimension. Let $\Spec \K[\bar{Z}_\Omega]_{\bar{g}}$ be a basic non-empty (and thus dense) open subscheme in the complement of $\mathcal{W}$. Its inverse image under $(\pi_{\Omega,\K})_{\bar{f}}$ is the dense open subscheme $\Spec \K[\bar{Z}]_{\bar{f}\bar{g}}$. If now $X=(x_{\alpha \beta})$ is a closed point of $\Spec \K[\bar{Z}]_{\bar{f}\bar{g}}$ and $x_{ij}'$ is as above, then by construction the closed point of $\mathbb{A}_\K^{\Omega'}$ defined by the data $\{x_{\alpha \beta}: \, (\alpha, \beta) \in \Omega\}, \, x_{ij}'$ is in the image of $\pi_{\Omega',\K}$ (for if not, $\{x_{\alpha \beta}: \, (\alpha, \beta) \in \Omega\}$ would be in $\mathcal{W}$).

We have established that for each closed point $X=(x_{\alpha \beta})$ of $\Spec \K[\bar{Z}]_{\bar{f}\bar{g}}$ there are at least as many points in the fiber $\pi_{\Omega,\K}^{-1}(\pi_{\Omega,\K}(X))$ as distinct roots of $\tilde{\mu}_{ij}$ (note here that the fiber consists exclusively of $\K$-rational points because it is zero-dimensional and $\K$ is algebraically closed). As $\mu_{ij}$ is monic of degree $d_{ij}$, so will be its specialization $\tilde{\mu}_{ij}$. We proceed as follows in order to further localize, so that all $d_{ij}$ roots of $\tilde{\mu}_{ij}$ are distinct. Let $\mathfrak{r}$ be the resultant of $\mu_{ij}$ and its derivative with respect to $z_{ij}$. Let $\tilde{\mathfrak{r}}$ be the specialization of $\mathfrak{r}$ by $z_{\alpha \beta} \mapsto x_{\alpha \beta}, \, \forall (\alpha,\beta) \in \Omega$. As $\mu_{ij}$ is irreducible and $\operatorname{char}{\K}=0$, we have that $\mathfrak{r}$ is a non-zero element of $\K[\bar{Z}_\Omega]_{\bar{f}}$. Formation of the resultant commutes with specialization so that the resultant of $\tilde{\mu}_{ij}$ is $\tilde{\mathfrak{r}}$. Moreover, the discriminant of $\tilde{\mu}_{ij}$ is up to sign equal to $\tilde{\mathfrak{r}}$. Recalling that the discriminant of a polynomial in a single variable over a field is zero if and only if the polynomial has roots with higher multiplicity, we see that for every closed point $X=(x_{\alpha \beta})$ of $\Spec (\K[\bar{Z}]_{\bar{f}\bar{g}})_{\mathfrak{r}}$ the polynomial $\tilde{\mu}_{ij} \in \K[z_{ij}]$ has $d_{ij}$ distinct roots and thus the fiber $\pi_{\Omega,\K}^{-1}(\pi_{\Omega,\K}(X))$ contains at least $d_{ij}$ distinct $\K$-rational points. 

We have established that there is a dense open set $U \subseteq \Spec (\K[\bar{Z}])$, such that for any $\K$-rational point $X \in U$, there are at least $d_{ij}$ $\K$-rational points in $\pi_{\Omega,\K}^{-1}(\pi_{\Omega,\K}(X))$. Intersecting $U$ with the dense open set $U'$ defined by the polynomials in Definition \ref{dfn:generically-uniquely-completable-k} (which exists by hypothesis), we get a dense open set $U''$, such that for every $\K$-rational point $X \in U''$ there are at least $d_{ij}$ $\K$-rational points in $\pi_{\Omega,\K}^{-1}(\pi_{\Omega,\K}(X))$ and at the same time $X$ is the only such point. Hence $d_{ij}=1$, and so $\bar{z}_{ij} \in \K(\bar{Z}_\Omega)$. This is true for any $(i,j) \not\in \Omega$ and so $\K(\bar{Z}_\Omega) = \K(\bar{Z})$. To complete the proof we need to show that $\k(\bar{Z}_\Omega) = \k(\bar{Z})$. We first note that $\k(\bar{Z}) \otimes_\k \K$ is a localization of the Noetherian ring $\k[\bar{Z}] \otimes_\k \K = \K[\bar{Z}]$, and thus it is Noetherian. Now the Grothendieck-Sharp formula ((4.2.1.5) at page 349 in \cite{grothendieck1966elements}; see also \cite{sharp1977dimension} more generally) for the Krull dimension of the tensor product of two field extensions gives $$\dim \k(\bar{Z}) \otimes_\k \K = \min\{\operatorname{tr.deg}_\k \k(\bar{Z}), \operatorname{tr.deg}_\k \K\} = 0,$$ 
since $\K$ is the algebraic closure of $\k$. It follows that $\k(\bar{Z}) \otimes_\k \K$ is an Artinian ring. But $\k(\bar{Z}) \otimes_\k \K$ is a subring of $\K(\bar{Z}) = (\k[\bar{Z}] \otimes_\k \K)_{(0)}$ and thus it is an integral domain; it must then be a field. As it contains $\K[\bar{Z}]$, we see that $\k(\bar{Z}) \otimes_\k \K = \K(\bar{Z})$. Similarly, $\k(\bar{Z}_\Omega) \otimes_\k \K = \K(\bar{Z}_\Omega)$. Finally, the $\k(\bar{Z}_\Omega)$-dimension of $\k(\bar{Z})$ coincides with the $\k(\bar{Z}_\Omega)\otimes_\k \K$-dimension of $\k(\bar{Z}) \otimes_\k \K$, the latter being $1$ since $\K(\bar{Z}_\Omega) = \K(\bar{Z})$.

\emph{(ii) $\Rightarrow$ (iii)} Suppose $\k(\bar{Z}_\Omega)=\k(\bar{Z})$. Hence for every $(i,j) \not\in \Omega$ there exists $f_{ij} \in \k(\bar{Z}_\Omega)$ such that $\bar{z}_{ij} = f_{ij}$. Now $\k[\bar{Z}_\Omega]$ is the isomorphic image of $\k[Z_\Omega] / \k[Z_\Omega] \cap I_{r+1}(Z)$ in $\k[\bar{Z}]$ under the map $z_{ij} + I_{r+1}(Z) \cap k[Z_\Omega] \mapsto \bar{z}_{ij}$. This shows that there exist polynomials $p_{ij}, \, q_{ij} \in k[Z_\Omega]$, with $q_{ij} \not\in I_{r+1}(Z)$, such that $$f_{ij} =  \frac{p_{ij} + I_{r+1}(Z)}{q_{ij} + I_{r+1}(Z)} = \frac{\bar{p}_{ij}}{\bar{q}_{ij}}.$$ 
Set $q = \prod_{(i,j) \not\in \Omega} q_{ij}$ and $U = \Spec \k[\bar{Z}]_{\bar{q}}$. As $q \not\in I_{r+1}(Z)$, equivalently $\bar{q} \neq 0$, and $\k[\bar{Z}]$ is an integral domain, we have that $U$ is a non-empty open set of $\Spec \k[\bar{Z}]$. Let $X \in U$ be represented by the prime ideal $\mathfrak{P}$ of $\k[\bar{Z}]$. Let $\mathfrak{p} = \mathfrak{P} \cap \k[Z_\Omega]$ be the contraction of $\mathfrak{P}$ to $\k[Z_\Omega]$ under the map $\varphi_\Omega$; as such $\ker(\varphi_\Omega) \subseteq \mathfrak{p}$. Then $\pi_\Omega^{-1} \left( \pi_\Omega(X)\right) = \Spec \k[\bar{Z}] \otimes_{\k[Z_\Omega]} \kappa(\mathfrak{p})$, where $\kappa(\mathfrak{p}) = \k[Z_\Omega]_\mathfrak{p} / \mathfrak{p}\k[Z_\Omega]_\mathfrak{p} $ is the residue field of $\mathfrak{p}$. We show $\k[\bar{Z}] \otimes_{\k[Z_\Omega]} \kappa(\mathfrak{p}) = \kappa(\mathfrak{P})$. 

Denote by $\k[\bar{Z}]_\mathfrak{p}$ the localization of the $\k[Z_\Omega]$-module $\k[\bar{Z}]$ at the prime ideal $\mathfrak{p}$; this consists of inverting the images in $\k[\bar{Z}]$ of all elements of $\k[Z_\Omega] \setminus \mathfrak{p}$. We have an inclusion of integral domains $\k[\bar{Z}] \subseteq \k[\bar{Z}]_\mathfrak{p} \subseteq \k(\bar{Z}) = \k(\bar{Z}_\Omega)$, where the last equality is true by hypothesis. As $\bar{q} \not\in \mathfrak{P}$ and $\mathfrak{P}$ is prime, we have that $\bar{q}_{ij} \not\in \mathfrak{P}$ for every $(i,j) \not\in \Omega$, whence $q_{ij} \not\in \mathfrak{p}$. Hence already in $\k[\bar{Z}]_\mathfrak{p}$ we have the equality $\bar{z}_{ij} = f_{ij} = \bar{p}_{ij} / \bar{q}_{ij}$, and so $$\k[\bar{Z}]_\mathfrak{p} = \frac{\k[\bar{Z}]_\mathfrak{p}}{(\bar{z}_{ij} - \bar{p}_{ij}/\bar{q}_{ij}: \, (i,j) \not\in \Omega) \, \k[\bar{Z}]_\mathfrak{p}} =\k[\bar{Z}_\Omega]_\mathfrak{p} \cong \frac{\k[Z_\Omega]_\mathfrak{p}}{ \ker (\varphi_\Omega) \,  \k[Z_\Omega]_\mathfrak{p}},$$ 
where we used that $\k[Z_\Omega] / \ker(\varphi_\Omega) \cong \k[\bar{Z}_\Omega]$. It follows that $$\k[\bar{Z}] \otimes_{\k[Z_\Omega]} \kappa(\mathfrak{p}) = 
\frac{\k[\bar{Z}]_\mathfrak{p}}{\mathfrak{p} \k[\bar{Z}]_\mathfrak{p}} \cong \frac{\k[Z_\Omega]_\mathfrak{p}}{(\ker (\varphi_\Omega) + \mathfrak{p})\k[Z_\Omega]_\mathfrak{p}} = \kappa(\mathfrak{p}). $$
As $\pi_\Omega^{-1}(\pi_\Omega(X))$ is the spectrum of a field, it consists of a single point scheme-theoretically, which necessarily must be $\mathfrak{P}$ (note that this proof also shows that $\kappa(\mathfrak{P}) = \kappa(\mathfrak{p}))$.

\emph{(iii) $\Rightarrow$ (i)} Let the open set $U$ of the hypothesis be the complement of the closed subscheme of $\Spec \k[\bar{Z}]$ defined by the ideal $(\bar{p}_1,\dots,\bar{p}_s)$ of $\k[\bar{Z}]$; then the polynomials of Definition \ref{dfn:generically-uniquely-completable-k} can be taken to be $p_1,\dots,p_s \in \k[Z]$. Indeed, let $X=(x_{ij})$ be not a simultaneous root of $p_1,\dots,p_s$ (i.e. it is a $\k$-rational point of $U$) and set $\mathfrak{m}_{X,\Omega}$ to be the maximal ideal $(z_{ij}-x_{ij}: \, (i,j) \in \Omega)$ of $\k[Z_\Omega]$. By hypothesis $\pi_{\Omega}^{-1}(\pi_\Omega(X)) = \{X\}$ as schemes, which is to say that $$\frac{\k[Z]}{I_{r+1}(Z)+\mathfrak{m}_{X,\Omega}k[Z]}=\k[\bar{Z}] \otimes_{\k[Z_\Omega]} \frac{\k[Z_\Omega]}{\mathfrak{m}_{X,\Omega}} = \k.$$ 
Tensoring by $\K$ over $\k$ gives 
$$\frac{\K[Z]}{I_{r+1}(Z)\K[Z]+\mathfrak{m}_{X,\Omega}\K[Z]}=\K[\bar{Z}] \otimes_{\K[Z_\Omega]} \frac{\K[Z_\Omega]}{\mathfrak{m}_{X,\Omega}\K[Z_\Omega]} = \K,$$ 
which is equivalent to saying that $\pi_{\Omega,\K}^{-1}(\pi_{\Omega,\K}(X)) = \{X\}$ as schemes. In particular, $X$ is the only $\K$-rational point in its fiber under $\pi_{\Omega,\K}$.

\emph{(iii)$\Leftrightarrow$ (iv)} This is a consequence of a general fact on birational morphisms of integral schemes of finite type and their fields of functions, e.g. see \href{https://stacks.math.columbia.edu/tag/0552}{Lemma 0552} in \cite{stacks-project}.
\end{proof}

The property of being generically uniquely completable is in fact independent of the ground field (we always assume zero characteristic):

\begin{proposition} \label{prp:generically-uniquely-completable-Q-k}
$\Omega$ is generically uniquely completable at rank $r$ over $\k$ if and only if it is generically uniquely completable at rank $r$ over $\mathbb{Q}$.
\end{proposition}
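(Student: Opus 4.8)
The plan is to deduce Proposition~\ref{prp:generically-uniquely-completable-Q-k} from the function-field criterion already in hand. By Proposition~\ref{prp:generically-uniquely-completable-k}, applied with the ground field taken to be $\k$ and then $\mathbb{Q}$ (both of characteristic zero), $\Omega$ is generically uniquely completable at rank $r$ over a field $F\in\{\k,\mathbb{Q}\}$ if and only if $F(\bar Z_\Omega)=F(\bar Z)$, where $\bar Z$, $\bar Z_\Omega$ now denote the images modulo $I_{r+1}(Z)$ in $F[Z]$. Unwinding, this holds exactly when for every $(i,j)\notin\Omega$ there are $P,Q\in F[Z_\Omega]$ with $Q\notin I_{r+1}(Z)F[Z]$ and $Qz_{ij}-P\in I_{r+1}(Z)F[Z]$ (such $P,Q$ witness $\bar z_{ij}=\bar P/\bar Q\in F(\bar Z_\Omega)$). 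So it suffices to show this last condition holds over $\k$ precisely when it holds over $\mathbb{Q}$.

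The structural input that makes the comparison work is that $I_{r+1}(Z)$ is defined over $\mathbb{Q}$ — indeed over $\mathbb{Z}$ — so base change from $\mathbb{Q}$ to $\k$ is ``coefficientwise''. I would first record as a short lemma: since $\operatorname{char}\k=0$ we have $\mathbb{Q}\subseteq\k$; fixing a $\mathbb{Q}$-basis $(e_\lambda)_\lambda$ of $\k$ with $1$ among the $e_\lambda$, flatness of $\k$ over $\mathbb{Q}$ yields direct-sum decompositions of $\mathbb{Q}[Z]$-modules
\[
\k[Z]=\bigoplus_\lambda e_\lambda\,\mathbb{Q}[Z],
\qquad
I_{r+1}(Z)\k[Z]=\bigoplus_\lambda e_\lambda\, I_{r+1}(Z)\mathbb{Q}[Z],
\]
and likewise with $Z_\Omega$ in place of $Z$. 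Hence, for $x=\sum_\lambda e_\lambda x_\lambda$ with $x_\lambda\in\mathbb{Q}[Z]$, one has $x\in I_{r+1}(Z)\k[Z]$ if and only if $x_\lambda\in I_{r+1}(Z)\mathbb{Q}[Z]$ for all $\lambda$; in particular $I_{r+1}(Z)\k[Z]\cap\mathbb{Q}[Z]=I_{r+1}(Z)\mathbb{Q}[Z]$.

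Granting the lemma, the ``if'' direction is immediate: any $\mathbb{Q}$-witnesses $P,Q\in\mathbb{Q}[Z_\Omega]$ for $(i,j)$ stay valid over $\k$, because $Q\notin I_{r+1}(Z)\mathbb{Q}[Z]$ forces $Q\notin I_{r+1}(Z)\k[Z]$ by the displayed intersection formula. For the ``only if'' direction I would start from $\k$-witnesses $P,Q\in\k[Z_\Omega]$ for a fixed $(i,j)\notin\Omega$, expand $P=\sum_\lambda e_\lambda P_\lambda$ and $Q=\sum_\lambda e_\lambda Q_\lambda$ with $P_\lambda,Q_\lambda\in\mathbb{Q}[Z_\Omega]$, and compare $e_\lambda$-components in $Qz_{ij}-P=\sum_\lambda e_\lambda(Q_\lambda z_{ij}-P_\lambda)\in I_{r+1}(Z)\k[Z]$ to get $Q_\lambda z_{ij}-P_\lambda\in I_{r+1}(Z)\mathbb{Q}[Z]$ for every $\lambda$; since $Q\notin I_{r+1}(Z)\k[Z]$, at least one component $Q_{\lambda_1}$ lies outside $I_{r+1}(Z)\mathbb{Q}[Z]$, and that same index $\lambda_1$ supplies $\mathbb{Q}$-witnesses $P_{\lambda_1},Q_{\lambda_1}$ for $(i,j)$. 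Letting $(i,j)$ range over the complement of $\Omega$ gives $\mathbb{Q}(\bar Z_\Omega)=\mathbb{Q}(\bar Z)$. The only subtlety — the ``hard part'', modest as it is — is this coordination step: one must use a single component index $\lambda_1$ that is simultaneously a nonzero denominator and compatible with the numerator relation, which is why $\lambda_1$ is extracted from the nonvanishing of $Q$ rather than choosing numerator and denominator components independently. (Alternatively, one could argue via the base-field independence of the matroid $\mM(r,[m]\times[n])$ and of the degree $[\k(\bar Z):\k(\bar Z_\Omega)]$, using the same flat base change for the elimination ideal, but the coefficientwise argument is more direct.)
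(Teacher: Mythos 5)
Your proof is correct, and for the harder (``only if'') direction it takes a genuinely different and more elementary route than the paper. The paper's argument for that direction first extracts a base $\Omega_0\subseteq\Omega$ to get finiteness of $\mathbb{Q}(\bar Z_{\Omega_0})\subseteq\mathbb{Q}(\bar Z)$, introduces the minimal polynomial $\mu_{ij}$ of $\bar z_{ij}$ over $\mathbb{Q}(\bar Z_\Omega)$, and then shows by localizing at a suitable $\bar f$ and applying faithfully flat base change $-\otimes_{\mathbb{Q}}\k$ to an exact sequence that $\mu_{ij}$ remains the minimal polynomial over $\k(\bar Z_\Omega)$, whence $\deg\mu_{ij}=1$ by the hypothesis over $\k$. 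You instead reduce everything to the existence of linear ``witness pairs'' $(P,Q)$ with $Qz_{ij}-P\in I_{r+1}(Z)$ and transfer them between $\mathbb{Q}$ and $\k$ by decomposing $\k[Z]=\bigoplus_\lambda e_\lambda\mathbb{Q}[Z]$ along a $\mathbb{Q}$-basis of $\k$ containing $1$; the key point, which you handle correctly, is that the denominator index $\lambda_1$ must be chosen from the nonvanishing of $Q$ so that numerator and denominator components are coordinated. Your coefficientwise lemma also makes explicit the intersection formula $I_{r+1}(Z)\k[Z]\cap\mathbb{Q}[Z]=I_{r+1}(Z)\mathbb{Q}[Z]$, which the paper's ``if'' direction uses implicitly when asserting $\mathbb{Q}(\bar Z_\Omega)\subseteq\k(\bar Z_\Omega)$. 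What each approach buys: yours is shorter, avoids minimal polynomials and localizations entirely, and works verbatim for any ideal generated by polynomials with coefficients in the subfield; the paper's route establishes the stronger intermediate fact that the minimal polynomial of each $\bar z_{ij}$ --- hence the degree of the projection $\pi_\Omega$ --- is insensitive to the base field, which is of independent interest in view of Question \ref{que:Aldo}.
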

\begin{proof}
\emph{(if)} Suppose $\Omega$ is generically uniquely completable over $\mathbb{Q}$. Then by Proposition \ref{prp:generically-uniquely-completable-k} we have $\mathbb{Q}(\bar{Z}_\Omega) = \mathbb{Q}(\bar{Z})$. In particular, for any $(i,j) \not\in \Omega$, we have $z_{ij} \in \mathbb{Q}(\bar{Z}_\Omega)$. As $\mathbb{Q}(\bar{Z}_\Omega) \subseteq \k(\bar{Z}_\Omega)$, we readily get $k(\bar{Z}_\Omega) = \k(\bar{Z})$. Again by Proposition \ref{prp:generically-uniquely-completable-k}, $\Omega$ is generically uniquely completable over $k$. 

\emph{(only if)} Suppose $\Omega$ is generically uniquely completable over $\k$. By Proposition \ref{prp:generically-uniquely-completable-k}, the generic fiber of $\pi_{\Omega,\k}$ is finite. Equivalently, $\Omega$ contains a subset $\Omega_0$ which is a base set of the matroid, i.e. $|\Omega_0|
 = r(m+n-r)$ and the map $\k[Z_{\Omega_0}] \rightarrow \k[\bar{Z}]$ is injective. A fortiori the map $\mathbb{Q}[Z_{\Omega_0}] \rightarrow \mathbb{Q}[\bar{Z}]$ is injective. This induces an inclusion $\mathbb{Q}[Z_{\Omega_0}] \cong \mathbb{Q}[\bar{Z}_{\Omega_0}] \subseteq \mathbb{Q}[\bar{Z}]$ of finitely generated $\mathbb{Q}$-domains of the same dimension, rendering the field extension $\mathbb{Q}(\bar{Z}_{\Omega_0}) \subseteq \mathbb{Q}(\bar{Z})$ algebraic. As $\mathbb{Q}(\bar{Z})$ is finitely generated over $\mathbb{Q}(\bar{Z}_{\Omega_0})$, the field extension $\mathbb{Q}(\bar{Z}_{\Omega_0}) \subseteq \mathbb{Q}(\bar{Z})$ is in fact finite, and thus so is $\mathbb{Q}(\bar{Z}_{\Omega}) \subseteq \mathbb{Q}(\bar{Z})$. For $(i,j) \not\in \Omega$, let $\mu_{ij}$ be the minimal polynomial of $\bar{z}_{ij}$ over $\mathbb{Q}(\bar{Z}_{\Omega})$. With $\Omega'=\Omega \cup \{(i,j)\}$, we have that $\mu_{ij}$ generates the kernel of the $\mathbb{Q}(\bar{Z}_{\Omega})$-algebra epimorphism 
$$ \mathbb{Q}(\bar{Z}_{\Omega})[z_{ij}] \twoheadrightarrow \mathbb{Q}(\bar{Z}_{\Omega})[\bar{z}_{ij}] = \mathbb{Q}(\bar{Z}_{\Omega'}),$$
which sends $z_{ij}$ to $\bar{z}_{ij}$. There exists then a non-zero $\bar{f} \in \mathbb{Q}[\bar{Z}_{\Omega}]$ for which we have an exact sequence of $\mathbb{Q}[\bar{Z}_{\Omega}]_{\bar{f}}$-modules

$$0 \rightarrow \mu_{ij}\mathbb{Q}[\bar{Z}_{\Omega}]_{\bar{f}} \rightarrow  \mathbb{Q}[\bar{Z}_{\Omega}]_{\bar{f}}[z_{ij}] \rightarrow \mathbb{Q}[\bar{Z}_{\Omega}]_{\bar{f}}[\bar{z}_{ij}] \rightarrow 0. $$
As $\k$ is faithfully flat over $\mathbb{Q}$, and noting that $\mathbb{Q}[\bar{Z}_{\Omega}]_{\bar{f}} \otimes_{\mathbb{Q}} \k = \k[\bar{Z}_{\Omega}]_{\bar{f}}$, tensoring with $\k$ over $\mathbb{Q}$ gives an  exact sequence of $\k[\bar{Z}_{\Omega}]_{\bar{f}}$-modules
$$0 \rightarrow \mu_{ij}\k[\bar{Z}_{\Omega}]_{\bar{f}} \rightarrow  \k[\bar{Z}_{\Omega}]_{\bar{f}}[z_{ij}] \rightarrow \k[\bar{Z}_{\Omega}]_{\bar{f}}[\bar{z}_{ij}] \rightarrow 0. $$
Further localizing this exact sequence at the multiplicatively closed set $\k[\bar{Z}_{\Omega}]_{\bar{f}} \setminus \{0\}$ finally gives an exact sequence of $\k(\bar{Z}_{\Omega})$-modules 
$$0 \rightarrow (\mu_{ij}) \rightarrow  \k(\bar{Z}_{\Omega})[z_{ij}] \rightarrow \k(\bar{Z}_{\Omega'}) \rightarrow 0. $$
The exactness of this sequence implies that $\mu_{ij}$ is the minimal polynomial of $\bar{z}_{ij}$ as an element of the field $\k(\bar{Z})$ over $\k(\bar{Z}_{\Omega})$. By our hypothesis that $\Omega$ is generically finitely completable over $\k$ and Proposition \ref{prp:generically-uniquely-completable-k}ii), the degree of $\mu_{ij}$ must be equal to one. But then $\bar{z}_{ij} \in \mathbb{Q}(\bar{Z}_\Omega)$. This is true for any $(i,j) \not\in \Omega$ and so $\mathbb{Q}(\bar{Z}_\Omega) = \mathbb{Q}(\bar{Z})$. Again by Proposition \ref{prp:generically-uniquely-completable-k}ii), $\Omega$ is generically uniquely completable over $\mathbb{Q}$. 
\end{proof}

\begin{notation}
We write $\Omega = \bigcup_{j \in[n]} \omega_j \times \{j\}$, where the $\omega_j$'s are subsets of $[m]$. With $\mathcal{J} \subseteq [n]$, we set $\Omega_{\mathcal{J}} = \bigcup_{j \in \mathcal{J}} \omega_j \times \{j\}$ and view $\Omega_{\J}$ as a subset of $[m] \times \J$. The problem of unique completability of $\Omega_\J$ concerns matrices supported at $[m] \times \J$. 
\end{notation}

Observing at least $r$ entries per row or column is necessary for unique completability.

\begin{proposition} \label{prp:<r}
Suppose $\Omega \subseteq [m] \times [n]$ is such that $|\omega_j|<r$ for some $j \in [n]$. Then $\Omega$ is not uniquely completable at rank $r$
\end{proposition}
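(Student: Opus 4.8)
The plan is to reduce to the field-theoretic criterion of Proposition~\ref{prp:generically-uniquely-completable-k}: it suffices to show that the extension $\k(\bar Z_\Omega)\subseteq\k(\bar Z)$ is strict, since the failure of condition (ii) there rules out conditions (iii)--(iv) as well, hence rules out unique completability in the sense of Definition~\ref{dfn:uniquely-completable}. Concretely, I will exhibit an entry $\bar z_{i_0 j}$ of column $j$ that does \emph{not} lie in $\k(\bar Z_\Omega)$.

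First I would fix a convenient transcendence basis of $\k(\bar Z)$ adapted to the column $j$. Since $r<\min\{m,n\}$, choose $R\subseteq[m]$ with $\omega_j\subseteq R$ and $|R|=r$, and choose $C\subseteq[n]\setminus\{j\}$ with $|C|=r$. The ``cross'' $\mathcal B=\{\bar z_{k\ell}\ :\ k\in R\text{ or }\ell\in C\}$ has cardinality $r(m+n-r)$ and is a base of $\mM(r,[m]\times[n])$: on the open locus where the submatrix indexed by $R\times C$ is invertible, the vanishing of the $(r+1)$-minors forces $\bar z_{k\ell}=\bar Z_{\{k\}\times C}(\bar Z_{R\times C})^{-1}\bar Z_{R\times\{\ell\}}$ for every $k\notin R$, $\ell\notin C$. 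Hence $\k(\bar Z)=\k(\mathcal B)$ is a purely transcendental extension of $\k$ on the algebraically independent family $\mathcal B$.

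The core step is to check that every observed variable $\bar z_{k\ell}$, $(k,\ell)\in\Omega$, expressed in terms of $\mathcal B$, already lies in the subfield $L$ generated by the proper subfamily $\{\bar z_{kc}:k\in[m],\,c\in C\}\cup\{\bar z_{r\ell}:r\in R,\,\ell\in[n]\setminus\{j\}\}\cup\{\bar z_{kj}:k\in\omega_j\}$ of $\mathcal B$. This is a short case analysis: if $\ell=j$ then $(k,j)\in\Omega$ gives $k\in\omega_j\subseteq R$, so $\bar z_{kj}$ is one of the listed generators; if $\ell\ne j$ and $k\in R$ or $\ell\in C$, then $\bar z_{k\ell}$ is itself a listed generator; and if $\ell\ne j$, $k\notin R$, $\ell\notin C$, then the Schur expression for $\bar z_{k\ell}$ involves only $\bar z_{kc},\bar z_{rc}$ $(r\in R,c\in C)$ and $\bar z_{r\ell}$ $(r\in R)$, all in columns other than $j$ and hence all listed. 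Thus $\k(\bar Z_\Omega)\subseteq L$. Now pick $i_0\in R\setminus\omega_j$, which is nonempty since $|R|=r>|\omega_j|$; the variable $\bar z_{i_0 j}\in\mathcal B$ is not among the generators of $L$, hence is transcendental over $L$, so $\bar z_{i_0 j}\notin\k(\bar Z_\Omega)$. Therefore $\k(\bar Z_\Omega)\subsetneq\k(\bar Z)$, and by Proposition~\ref{prp:generically-uniquely-completable-k} the set $\Omega$ is not uniquely completable at rank $r$.

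The only delicate point, which I would write out carefully, is the bookkeeping in the core step: one must verify that the sole column-$j$ variables in $\mathcal B$ are $\{\bar z_{rj}:r\in R\}$ (this is exactly where $j\notin C$ enters), and that none of $\{\bar z_{rj}:r\in R\setminus\omega_j\}$ ever occurs in the $\mathcal B$-expansion of an observed entry, the observed column-$j$ entries contributing only $\bar z_{kj}$ with $k\in\omega_j\subsetneq R$. Geometrically the assertion is transparent --- fixing a generic factorization $\bar Z=AB$ and perturbing only the $j$-th column of $B$ alters column $j$ of $\bar Z$ while leaving every observed entry untouched and preserving rank $\le r$, producing a second completion --- and I might record this as motivation, but the field-theoretic route through Proposition~\ref{prp:generically-uniquely-completable-k} gives the cleanest rigorous argument.
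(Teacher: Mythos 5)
Your proof is correct, and it takes a genuinely different (and more self-contained) route than the paper. The paper disposes of this proposition in one line by citing the proof of Lemma 10 in \cite{Tsakiris-AMS-2023}, which shows that $|\omega_j|<r$ already forces $\Omega$ to fail \emph{finite} completability, i.e.\ $\operatorname{tr.deg.}_\k \k(\bar{Z}_\Omega) < r(m+n-r)$; unique completability then fails a fortiori. Your argument reproves exactly this stronger fact from scratch: the ``cross'' $\mathcal{B}$ indexed by $R\times[n]\cup[m]\times C$ is a transcendence basis of $\k(\bar{Z})$ (the cardinality count $r(m+n-r)$ together with the Schur-complement identity $\det(\bar{Z}_{R\times C})\,\bar z_{k\ell}=\bar Z_{\{k\}\times C}\operatorname{adj}(\bar Z_{R\times C})\bar Z_{R\times\{\ell\}}$, valid in the domain $\k[\bar Z]$ because the bordered $(r+1)$-minor vanishes and $\det(\bar Z_{R\times C})\neq 0$, makes this rigorous), and your case analysis correctly verifies that every observed entry lands in the subfield $L$ generated by $\mathcal{B}\setminus\{\bar z_{rj}: r\in R\setminus\omega_j\}$, so that $\bar z_{i_0 j}$ is transcendental over $\k(\bar Z_\Omega)$. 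Note that this gives more than non-membership in $\k(\bar Z_\Omega)$: it shows the generic fiber of $\pi_\Omega$ is positive-dimensional, which is precisely the paper's (stronger) assertion and immediately rules out both Definition \ref{dfn:uniquely-completable} and Definition \ref{dfn:finitely-completable}. What your approach buys is independence from the external reference and an explicit witness $\bar z_{i_0 j}$ of the free parameter; what it costs is length. The only presentational caveat is that you frame the conclusion via Proposition \ref{prp:generically-uniquely-completable-k}(ii), which is stated for $\operatorname{char}\k=0$ in that section, whereas the transcendence statement you actually prove yields the proposition directly from Definition \ref{dfn:uniquely-completable} without needing that equivalence.
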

\begin{proof}
From the proof of Lemma 10 in \cite{Tsakiris-AMS-2023}, $\Omega$ is not even finitely completable, i.e. the matroid rank of $\Omega$ is less than $r(m+n-r)$. 
\end{proof}

On the other hand, rows or columns that are observed at exactly $r$ entries are irrelevant.

\begin{proposition} \label{prp:column-r}
Suppose $\Omega \subseteq [m] \times [n]$ is such that $|\omega_j| = r$ for some $j \in [n]$. Then $\Omega$ is uniquely completable at rank $r$ if and only if $n-1\ge r$ and $\Omega_{[n] \setminus\{j\}}$ is uniquely completable at rank $r$.
\end{proposition}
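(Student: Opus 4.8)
The plan is to pass to the algebraic reformulation of unique completability given by Proposition \ref{prp:generically-uniquely-completable-k}: an index set is uniquely completable at rank $r$ exactly when the subfield generated by its observed entries equals the full function field of the ambient determinantal variety. After relabelling columns we may take $j = n$. Set $\J = [n-1]$, let $W = Z_\J$ be the $m \times (n-1)$ matrix of variables in the first $n-1$ columns, put $\k[\bar{W}] = \k[W] / I_{r+1}(W)$, and (in parallel with the paper's $\bar{Z}$-notation) write $\bar{W}_\Lambda$ for the image of $W_\Lambda$ in $\k[\bar{W}]$. Writing $\Omega' = \Omega_\J$, we have $\Omega = \Omega' \sqcup (\omega_n \times \{n\})$ with $|\omega_n| = r$. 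Since $r < n$ throughout, the inequality $n - 1 \ge r$ is automatic, so it is enough to prove $\k(\bar{Z}_\Omega) = \k(\bar{Z}) \iff \k(\bar{W}_{\Omega'}) = \k(\bar{W})$ (by Proposition \ref{prp:generically-uniquely-completable-k} applied on both sides, the right-hand condition is precisely unique completability of $\Omega_{[n]\setminus\{j\}}$ at rank $r$; when $n-1=r$ that variety is all of affine space and the equivalence below is transparent).

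First I would establish the structural link between the two rings. The composite $\k[W] \hookrightarrow \k[Z] \twoheadrightarrow \k[\bar{Z}]$ has kernel $I_{r+1}(Z) \cap \k[W] = I_{r+1}(W)$: one containment is clear, and for the other, forgetting the last column maps the determinantal variety of $Z$ onto that of $W$ (extend a matrix by a zero column), so the elimination ideal lies inside the radical prime $I_{r+1}(W)$. Hence $\k[\bar{W}] \subseteq \k[\bar{Z}]$ and $\k[\bar{Z}] = \k[\bar{W}][\bar{z}_{1n}, \dots, \bar{z}_{mn}]$. The heart of the argument is the claim
\[
\k(\bar{Z}) = \k(\bar{W})\bigl(\bar{z}_{in} : i \in \omega_n\bigr),
\]
with these $r$ elements algebraically independent over $\k(\bar{W})$. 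For the equality: over $\k(\bar{Z})$ the matrix $\bar{Z}$ has rank $\le r$ (all $(r+1)$-minors vanish), while its first $r$ columns are $\k(\bar{Z})$-linearly independent (some $r \times r$ minor among them is an $r$-minor of $W$, hence nonzero in $\k[\bar{Z}]$ by the previous paragraph, using $m, n-1 \ge r$), so column $n$ is a $\k(\bar{Z})$-combination $\bar{z}_{in} = \sum_{k=1}^{r} c_k \bar{z}_{ik}$; restricting these relations to the rows in $\omega_n$ gives a square linear system whose matrix is an $r \times r$ minor of $W$, hence invertible over $\k(\bar{W})$, so the unique $c_k$ lie in $\k(\bar{W})(\bar{z}_{in} : i \in \omega_n)$ and therefore so does every $\bar{z}_{in}$. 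Algebraic independence then follows from the transcendence-degree count $\operatorname{tr.deg}_\k \k(\bar{Z}) - \operatorname{tr.deg}_\k \k(\bar{W}) = r(m+n-r) - r(m + n - 1 - r) = r = |\omega_n|$: a field extension generated by that many elements must have them algebraically independent.

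Granting the claim, both directions are short. If $\k(\bar{W}_{\Omega'}) = \k(\bar{W})$ then $\k(\bar{Z}_\Omega) = \k(\bar{W}_{\Omega'})(\bar{z}_{in} : i \in \omega_n) = \k(\bar{W})(\bar{z}_{in} : i \in \omega_n) = \k(\bar{Z})$. Conversely, if $\k(\bar{Z}_\Omega) = \k(\bar{Z})$, then with $\vec{v} = (\bar{z}_{in})_{i \in \omega_n}$ this says $\k(\bar{W}_{\Omega'})(\vec{v}) = \k(\bar{W})(\vec{v})$; since $\vec{v}$ is algebraically independent over $\k(\bar{W}) \supseteq \k(\bar{W}_{\Omega'})$, any $w \in \k(\bar{W})$ written as $w = P(\vec{v})/Q(\vec{v})$ with $P, Q$ having coefficients in $\k(\bar{W}_{\Omega'})$ satisfies the polynomial identity $w\, Q(\vec{v}) = P(\vec{v})$, and comparing coefficients forces $w$ to be a ratio of two coefficients, so $w \in \k(\bar{W}_{\Omega'})$; hence $\k(\bar{W}_{\Omega'}) = \k(\bar{W})$, as needed.

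The step I expect to be the main obstacle is the structural claim of the second paragraph — namely, that column $n$ of the generic rank-$\le r$ matrix is rational over $\k(\bar{W})$ adjoined only the $r$ observed entries of that column (which requires identifying the appropriate invertible $r \times r$ minors), together with the algebraic-independence bookkeeping that powers the ``only if'' direction. Everything else is formal field theory and standard facts about determinantal ideals.
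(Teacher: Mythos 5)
Your proof is correct, but it reaches the conclusion by a genuinely different route than the paper. The paper's proof works at the level of the \emph{observed} entries: via a Gr\"obner-basis/elimination argument (plus Lemma 7 of \cite{Tsakiris-AMS-2023}) it shows that $\k[\bar{Z}_\Omega]$ is a polynomial ring over $\k[\bar{Z}_{\Omega'}]$ in the $r$ variables $z_{\omega_j}$, so that $\k(\bar{Z}_{\Omega'}) \subseteq \k(\bar{Z}_\Omega)$ is purely transcendental; the forward direction then combines this with the algebraicity of $\k(\bar{Z}_{\Omega'}) \subseteq \k(\bar{Z}')$ (again from the cited lemma), and the converse is the explicit expansion of the $(r+1)$-minor $\det(Z_{\omega_j \cup \{i\}, \J \cup \{j\}})$. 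You instead prove the field-level statement $\k(\bar{Z}) = \k(\bar{W})(\bar{z}_{in} : i \in \omega_n)$ with the $r$ generators algebraically independent over $\k(\bar{W})$, using generic linear algebra on the rank-$r$ matrix together with a transcendence-degree count (your Cramer's-rule step is essentially the same minor expansion the paper performs, just packaged inside the structural claim), and then both directions of the equivalence reduce to the formal fact that $K(\vec{v}) = L(\vec{v})$ forces $K = L$ when $\vec{v}$ is algebraically independent over $L \supseteq K$. Your version buys self-containedness — no Gr\"obner bases, no appeal to the external lemma for either the identification $I_{r+1}(Z) \cap \k[W] = I_{r+1}(W)$ (your surjectivity-of-projection argument suffices) or for the forward direction — while the paper's version yields the stronger ring-theoretic conclusion that $\k[\bar{Z}_\Omega]$ itself is a polynomial extension of $\k[\bar{Z}_{\Omega'}]$. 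Your remark that $n-1 \ge r$ is automatic is correct under the paper's standing convention $r < \min\{m,n\}$; the hypothesis is retained in the statement because the proposition is applied iteratively to shrinking column sets (as in the codimension-one theorem), where that convention can fail for the submatrix.
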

\begin{proof}
Set $\Omega' = \Omega_{[n] \setminus\{j\}}$ and $Z'$ the matrix that we obtain by deleting the $j$th column of $Z$. Denote by $I'_{r+1}(Z')$ the ideal of $(r+1)$-minors of $Z'$ in the polynomial ring $\k[Z']$. We claim that the generators of the elimination ideal $\k[Z_{\Omega'}] \cap I'_{r+1}(Z')$ also generate the elimination ideal $\k[Z_{\Omega}] \cap I_{r+1}(Z')$. To see this, note that the generators of $\k[Z_{\Omega'}] \cap I'_{r+1}(Z')$ can be computed by running Buchberger's algorithm on the $(r+1)$-minors of $Z'$, under a term order $\succ'$ in $\k[Z']$ that satisfies $z_{\alpha \beta } \succ' z_{\gamma \delta}$ for any $(\alpha,\beta) \not\in \Omega'$ and any $(\gamma,\delta) \in \Omega'$. We can extend $\succ'$ to a term order $\succ$ in $\k[Z]$ which is an elimination order for $\Omega$, and the generators for $\k[Z_{\Omega}] \cap I_{r+1}(Z')$ can be computed by running Buchberger's algorithm on the $(r+1)$-minors of $Z'$ under $\succ$. The point now is that Buchberger's algorithm will behave identically in both cases, because the $(r+1)$-minors of $Z'$ do not involve at all variables from the $j$th column of $Z$. 

By a Gr\"obner basis argument, the proof of Lemma 7 in \cite{Tsakiris-AMS-2023} established that there is an inclusion of sets $\k[Z_\Omega]\cap I_{r+1}(Z) \subset I_{r+1}(Z')$. As the left side always lives in $\k[Z_\Omega]$, we also get $\k[Z_\Omega]\cap I_{r+1}(Z) \subseteq k[Z_\Omega] \cap I_{r+1}(Z')$. As $I_{r+1}(Z') \subset I_{r+1}(Z)$, we conclude that $$\k[Z_\Omega]\cap I_{r+1}(Z) =  \k[Z_\Omega] \cap I_{r+1}(Z').$$
By what we proved in the previous paragraph, $\k[Z_\Omega]\cap I_{r+1}(Z') = (g_1,\dots,g_s)$, where $g_1,\dots,g_s$ are the generators of $\k[Z_{\Omega'}]\cap I'_{r+1}(Z')$; thus also $\k[Z_\Omega]\cap I_{r+1}(Z) = (g_1,\dots,g_s)$. This implies that $\k[\bar{Z}_\Omega] = \k[\bar{Z}_{\Omega'}][z_{\omega_j}]$ is a polynomial ring in the variables $z_{\omega_j}$ over $\k[\bar{Z}_{\Omega'}]$. We thus have $\k(\bar{Z}_\Omega) = \k(\bar{Z}_{\Omega'})(z_{\omega_j})$; in particular the field extension $\k(\bar{Z}_{\Omega'}) \subset \k(\bar{Z}_{\Omega})$ is purely transcendental.

Suppose that $\Omega$ is uniquely completable, i.e. $\k(\bar{Z}_{\Omega}) = \k(\bar{Z})$. A fortiori the generic fiber of $\pi_{\Omega}$ is finite, and by Lemma 7 in \cite{Tsakiris-AMS-2023} so is the generic fiber of $\pi_{\Omega'}$. It follows that the field extension $\k(\bar{Z}_{\Omega'}) \subseteq \k(\bar{Z}')$ is algebraic. Thus for $(\alpha,\beta) \not\in \Omega'$ and $\beta \neq j$ we have that $\bar{z}_{\alpha \beta}$ is algebraic over $\k(\bar{Z}_{\Omega'})$. But $\bar{z}_{\alpha \beta} \in \k(\bar{Z})$, and since $\k(\bar{Z})=\k(\bar{Z}_{\Omega})$ is purely transcendental over $\k(\bar{Z}_{\Omega'})$, we must have that $\bar{z}_{\alpha \beta} \in \k(\bar{Z}_{\Omega'})$; that is $\k(\bar{Z}_{\Omega'}) = \k(\bar{Z}')$ and $\Omega'$ is uniquely completable.

Conversely, suppose $\Omega'$ is uniquely completable. Then for any $(\alpha,\beta) \not\in \Omega'$ and $\beta \neq j$, we have that $\bar{z}_{\alpha \beta} \in \k(\bar{Z}_{\Omega'})$ and a fortiori $\bar{z}_{\alpha \beta} \in \k(\bar{Z}_{\Omega})$. We will be done once we show that $\bar{z}_{ij} \in \k(\bar{Z}_{\Omega})$ for every $i \not\in \omega_j$. Write $\omega_j = \{i_1,\dots,i_r\}$. Let $\J=\{j_1,\dots,j_r\}$ be distinct elements of $[n]$ different from $j$ (such elements exist because of the hypothesis $n-1\ge r$). Denote by $\det(Z_{\omega_j,\J})$ the $r$-minor of $Z$ corresponding to row indices $\omega_j$ and column indices $\J$. By degree considerations, $\det(Z_{\omega_j,\J})$ does not belong to $I_{r+1}(Z)$, and thus $\det(\bar{Z}_{\omega_j,\J})$ is a non-zero element of $\k[\bar{Z}']$, hence a non-zero element of $\k(\bar{Z}') = \k(\bar{Z}_{\Omega'})$. For $i \not\in \omega_j$, it is trivial that $\det(\bar{Z}_{\omega_j \cup \{i\}, \J \cup \{j\}})=0$ in $\k(\bar{Z})$. This is a relation of the form $$\det(\bar{Z}_{\omega_j,\J}) \bar{z}_{ij} + c_1 \bar{z}_{i_1j}+\cdots+c_r \bar{z}_{i_r j} = 0,$$
where $c_1,\dots,c_r$ are elements of $\k(\bar{Z}')=\k(\bar{Z}_{\Omega'})$. We conclude that $\bar{z}_{ij} \in \k(\bar{Z}_\Omega)$. 
\end{proof}

Unique completability ``descends" from higher to lower ranks.

\begin{proposition} \label{prp:rank-descent-full}
If a set $\Omega$ is uniquely completable at rank $r$ then $\Omega$ is uniquely completable at rank $r' < r$.
\end{proposition}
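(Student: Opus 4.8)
The plan is to reduce to the field-theoretic reformulation in Proposition~\ref{prp:generically-uniquely-completable-k}(ii) and then to transport the rational section from rank $r$ to rank $r'$ by adding a generic auxiliary matrix of rank $r-r'$. Write $\bar z_{ij}$ for the class of $z_{ij}$ modulo $I_{r+1}(Z)$ and $\tilde z_{ij}$ for its class modulo $I_{r'+1}(Z)$, and use $\k[\tilde Z]$, $\k[\tilde Z_\Omega]$, $\k(\tilde Z_\Omega)\subseteq\k(\tilde Z)$ for the rank-$r'$ analogues of the rank-$r$ objects $\k[\bar Z]$, $\k[\bar Z_\Omega]$, $\k(\bar Z_\Omega)\subseteq\k(\bar Z)$. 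By hypothesis and Proposition~\ref{prp:generically-uniquely-completable-k} we have $\k(\bar Z_\Omega)=\k(\bar Z)$, so for each $(i,j)\notin\Omega$ there are $p_{ij},q_{ij}\in\k[Z_\Omega]$ with $q_{ij}\notin I_{r+1}(Z)$ and $q_{ij}z_{ij}-p_{ij}\in I_{r+1}(Z)$, and I must establish $\tilde z_{ij}\in\k(\tilde Z_\Omega)$ for every such $(i,j)$. Simply reducing $q_{ij}z_{ij}-p_{ij}$ modulo the larger ideal $I_{r'+1}(Z)$ is not enough, because $q_{ij}$ may itself lie in $I_{r'+1}(Z)$, so that the reduced denominator vanishes; overcoming this is the heart of the proof.

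The device is a single injective ring homomorphism built from the classical parametrizations of determinantal rings \cite{bruns2022determinants}. Let $C,D,E,F$ be matrices of indeterminates of sizes $m\times r'$, $r'\times n$, $m\times(r-r')$, $(r-r')\times n$, and consider the $\k$-algebra map $\alpha_0\colon\k[Z]\to\k[\tilde Z][E,F]$ sending $z_{ij}\mapsto\tilde z_{ij}+(EF)_{ij}$. Composing $\alpha_0$ with the extension to $\k[\tilde Z][E,F]\hookrightarrow\k[C,D,E,F]$ of the classical embedding $\k[\tilde Z]\hookrightarrow\k[C,D]$, $\tilde z_{ij}\mapsto(CD)_{ij}$, yields the map $z_{ij}\mapsto(CD)_{ij}+(EF)_{ij}=\bigl([\,C\mid E\,]\left[\begin{smallmatrix}D\\ F\end{smallmatrix}\right]\bigr)_{ij}$, which is the rank-$r$ parametrization and so has kernel $I_{r+1}(Z)$. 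Since the outer embedding is injective, $\ker\alpha_0=I_{r+1}(Z)$, so $\alpha_0$ descends to an injection
$$\alpha\colon \k[Z]/I_{r+1}(Z)\;\hookrightarrow\;\k[\tilde Z][E,F],\qquad \bar z_{ij}\longmapsto\tilde z_{ij}+(EF)_{ij}.$$
Applying $\alpha$ to the identity $q_{ij}z_{ij}=p_{ij}$, valid in $\k[Z]/I_{r+1}(Z)$ because $\bar z_{ij}=\bar p_{ij}/\bar q_{ij}$ and $\k[\bar Z]$ embeds in its fraction field, gives in $\k[\tilde Z][E,F]$ the relation
$$\widehat Q\cdot\bigl(\tilde z_{ij}+(EF)_{ij}\bigr)=\widehat P,$$
where $\widehat Q=q_{ij}\bigl(\{\tilde z_{pq}+(EF)_{pq}\}_{(p,q)\in\Omega}\bigr)$ and $\widehat P=p_{ij}\bigl(\{\tilde z_{pq}+(EF)_{pq}\}_{(p,q)\in\Omega}\bigr)$ both lie in the subring $\k[\tilde Z_\Omega][E,F]$, and $\widehat Q=\alpha(\bar q_{ij})\neq 0$ since $\alpha$ is injective and $\bar q_{ij}\neq 0$.

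From the displayed relation, $\tilde z_{ij}+(EF)_{ij}=\widehat P/\widehat Q$ belongs to $\operatorname{Frac}\!\bigl(\k[\tilde Z_\Omega][E,F]\bigr)=\k(\tilde Z_\Omega)(E,F)$, hence so does $\tilde z_{ij}$. But $\tilde z_{ij}\in\k(\tilde Z)$, and $E,F$ are algebraically independent over $\k(\tilde Z)$; the elementary fact that $M(E,F)\cap L=M$ for any field extension $M\subseteq L$ with $E,F$ transcendental over $L$ (write an element of the intersection as a ratio of polynomials in $E,F$ over $M$, observe it is constant in $E,F$, and compare coefficients) then forces $\tilde z_{ij}\in\k(\tilde Z_\Omega)$. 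As this holds for all $(i,j)\notin\Omega$, we get $\k(\tilde Z_\Omega)=\k(\tilde Z)$, and Proposition~\ref{prp:generically-uniquely-completable-k} gives that $\Omega$ is uniquely completable at rank $r'$. The step I expect to require the most thought is the one just highlighted: one has to first ``spread'' the rank-$r$ relation over the extra parameters $E,F$---equivalently, to deform the generic rank-$r'$ matrix inside the rank-$r$ variety via a generic rank-$(r-r')$ perturbation---and only then descend, since a naive reduction modulo $I_{r'+1}(Z)$ destroys the denominators. The other ingredients (the two parametrization theorems and the field lemma) are routine.
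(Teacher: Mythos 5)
Your proof is correct, but it takes a genuinely different route from the paper's, and in fact it is more careful. The paper's own argument is a two-line reduction: from $\k(\bar{Z}_\Omega)=\k(\bar{Z})$ one extracts relations $q_{ij}z_{ij}-p_{ij}\in I_{r+1}(Z)$ with $p_{ij},q_{ij}\in\k[Z_\Omega]$ and $q_{ij}\notin I_{r+1}(Z)$, and then simply observes that $I_{r+1}(Z)\subset I_{r'+1}(Z)$, so the same relations hold modulo $I_{r'+1}(Z)$. As you point out, this leaves implicit the possibility that $q_{ij}$, while outside $I_{r+1}(Z)$, lies in the larger ideal $I_{r'+1}(Z)$, in which case the reduced relation degenerates to $0=0$ and yields nothing; the paper does not address this. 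Your device --- embedding $\k[Z]/I_{r+1}(Z)$ into $\k[\tilde{Z}][E,F]$ via $\bar{z}_{ij}\mapsto\tilde{z}_{ij}+(EF)_{ij}$, which is injective because composing with the classical parametrization $\tilde{z}_{ij}\mapsto(CD)_{ij}$ recovers the generic rank-$r$ parametrization $z_{ij}\mapsto([\,C\mid E\,]\left[\begin{smallmatrix}D\\ F\end{smallmatrix}\right])_{ij}$ --- transports the relation to one with a manifestly nonzero denominator $\widehat{Q}=\alpha(\bar{q}_{ij})$, and the final descent via $M(E,F)\cap L=M$ is sound. What your approach buys is a complete argument at the cost of the auxiliary matrices $E,F$ and two parametrization facts; what the paper's approach buys is brevity, at the cost of the unaddressed denominator issue (which your argument, a posteriori, shows is harmless: reducing your final identity modulo $I_{r'+1}(Z)$ produces a valid denominator outside $I_{r'+1}(Z)$).
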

\begin{proof}
The hypothesis and Definition \ref{dfn:generically-uniquely-completable-k} imply that $\k(\bar{Z}_\Omega) = \k(\bar{Z})$, where the $\bar{\cdot}$ notation indicates modulo $I_{r+1}(Z)$. As in the beginning of the proof of Proposition \ref{prp:generically-uniquely-completable-k}, this implies that for every $(i,j) \not\in \Omega$, there exist $p_{ij}, \, q_{ij} \in k[Z_\Omega]$ such that $q_{ij} z_{ij} - p_{ij} \in I_{r+1}(Z)$. Now, for any $r' < r$ we have the inclusion of determinantal ideals $I_{r+1}(Z) \subset I_{r'+1}(Z)$. Hence $q_{ij} z_{ij} - p_{ij} \in I_{r'+1}(Z)$ and thus $\k(\bar{Z}_\Omega) = \k(\bar{Z})$, where now the $\bar{\cdot}$ notation indicates modulo $I_{r'+1}(Z)$. 
\end{proof}

On the other hand, that unique completability does not ``ascend" in rank, can be seen by dimension reasons.

\begin{example}
Proposition 4 in \cite{Tsakiris-AMS-2023} describes bases of $\mathcal{M}(r,[m] \times [n])$ which are uniquely completable at rank $r$. These are not even finitely completable at rank $r+1$, because their size is equal to 
\[\rank \mathcal{M}(r,[m] \times [n])  = r(m+n-r)< (r+1)(m+n-r-1) = \rank \mathcal{M}(r+1,[m] \times [n]).\]
\end{example}


In good circumstances ``stitching" is possible if and only if ``excision" is possible.

\begin{proposition} \label{prp:stitching-excision-unique}
Identifying $\Omega$ with its incidence matrix, $\Omega$ is uniquely completable at rank $r$ if and only if $\Omega'$ is uniquely completable at rank $r+1$, where
$$ \Omega' = 
\begin{bmatrix}
\Omega & 1_{m \times 1} \\
1_{1 \times n} & 1
\end{bmatrix}.$$ 
\end{proposition}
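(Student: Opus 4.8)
The plan is to reduce unique completability to an equality of function fields and then to make the passage from $\Omega$ to $\Omega'$ completely explicit by a Schur complement. By Proposition~\ref{prp:generically-uniquely-completable-k}, the set $\Omega$ is uniquely completable at rank $r$ if and only if $\k(\bar{Z}_\Omega) = \k(\bar{Z})$ (classes modulo $I_{r+1}(Z)$), and $\Omega'$ is uniquely completable at rank $r+1$ if and only if $\k(\bar{Z}'_{\Omega'}) = \k(\bar{Z}')$ (classes modulo $I_{r+2}(Z')$), where $Z'$ is an $(m+1)\times(n+1)$ matrix of variables. I will write $a_{ij} = z'_{ij}$ for $i\le m,\,j\le n$, and $b_i = z'_{i,n+1}$, $c_j = z'_{m+1,j}$, $d = z'_{m+1,n+1}$ for the border variables; then $Z'_{\Omega'}$ consists of the $a_{ij}$ with $(i,j)\in\Omega$ together with all of $b_1,\dots,b_m,c_1,\dots,c_n,d$. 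So it suffices to prove that $\k(\bar{Z}'_{\Omega'}) = \k(\bar{Z}')$ if and only if $\k(\bar{Z}_\Omega) = \k(\bar{Z})$.

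First I would compute the localization $\k[\bar{Z}']_{\bar d}$ explicitly. Over $\k[Z'][d^{-1}]$ one has $L Z' R = \operatorname{diag}(W,d)$, where $L$ and $R$ are unit-determinant elementary matrices (acting on the left and right respectively) that clear the last column and the last row of $Z'$ using the pivot $d$, and $W = (a_{ij} - d^{-1}b_ic_j)_{i\le m,\,j\le n}$ is the Schur complement of $d$ in $Z'$. Since multiplication by invertible matrices preserves the ideal of $t$-minors for every $t$ (see \cite{bruns2022determinants}), and since $I_{r+2}(\operatorname{diag}(W,d)) = I_{r+2}(W) + d\,I_{r+1}(W)$, inverting $d$ and using $I_{r+2}(W)\subseteq I_{r+1}(W)$ gives $I_{r+2}(Z')\,\k[Z'][d^{-1}] = I_{r+1}(W)\,\k[Z'][d^{-1}]$. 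The substitution $a_{ij}\mapsto w_{ij} := a_{ij} - d^{-1}b_ic_j$ is an automorphism of $\k[Z'][d^{-1}]$ carrying this ideal to $I_{r+1}(W)$, so $\k[\bar{Z}']_{\bar d} \cong \k[\bar{Z}][b_1,\dots,b_m,c_1,\dots,c_n,d][d^{-1}]$ upon identifying $\k[W]/I_{r+1}(W)$ with $\k[\bar{Z}]$. Passing to fraction fields (legitimate since $\bar d \neq 0$ in the domain $\k[\bar{Z}']$) gives $\k(\bar{Z}') = \k(\bar{Z})(b_1,\dots,b_m,c_1,\dots,c_n,d)$, a purely transcendental extension; in particular $b_1,\dots,b_m,c_1,\dots,c_n,d$ are algebraically independent over $\k(\bar{Z})$.

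Next I would trace the observed coordinates through this isomorphism. It sends $\bar{z}'_{ij}=\bar{a}_{ij}$ to $\bar{z}_{ij} + d^{-1}b_ic_j$ for $i\le m,\,j\le n$, while fixing the $b_i, c_j, d$. Hence $\k(\bar{Z}'_{\Omega'})$ is generated over $\k$ by $\{\bar{z}_{ij}+d^{-1}b_ic_j : (i,j)\in\Omega\}$ together with all of $b_1,\dots,b_m,c_1,\dots,c_n,d$; since $d\neq 0$ and the $b_i,c_j,d$ already lie in this field, so does each $\bar{z}_{ij}$ with $(i,j)\in\Omega$, and conversely every generator lies in $\k(\bar{Z}_\Omega)(b,c,d)$. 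Therefore $\k(\bar{Z}'_{\Omega'}) = \k(\bar{Z}_\Omega)(b_1,\dots,b_m,c_1,\dots,c_n,d)$. If $\k(\bar{Z}_\Omega)=\k(\bar{Z})$ then $\k(\bar{Z}'_{\Omega'})=\k(\bar{Z}')$ is immediate. Conversely, if $\k(\bar{Z}_\Omega)(b,c,d) = \k(\bar{Z})(b,c,d)$, then because $b,c,d$ are algebraically independent over $\k(\bar{Z})\supseteq\k(\bar{Z}_\Omega)$, a comparison of coefficients of a rational function over $\k(\bar{Z})$ in these variables shows $\k(\bar{Z}) \cap \k(\bar{Z}_\Omega)(b,c,d) = \k(\bar{Z}_\Omega)$, whence $\k(\bar{Z}) = \k(\bar{Z})\cap\k(\bar{Z}_\Omega)(b,c,d) = \k(\bar{Z}_\Omega)$.

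The main obstacle is the first step: the Schur-complement identity $I_{r+2}(Z')\,\k[Z'][d^{-1}] = I_{r+1}(W)\,\k[Z'][d^{-1}]$ and the ensuing presentation of $\k[\bar{Z}']_{\bar d}$ as a polynomial extension of $\k[\bar{Z}]$. Once that is in place, the remaining points — that observing the entire border row and column is precisely what recovers $\k(\bar{Z}_\Omega)$, and the elementary field descent $\k(\bar{Z})\cap\k(\bar{Z}_\Omega)(b,c,d)=\k(\bar{Z}_\Omega)$ — are routine. One should also record that the standing assumptions $r<\min\{m,n\}$ and $\operatorname{char}\k=0$ guarantee that $I_{r+2}(Z')$ is a nonzero proper prime ideal and that Proposition~\ref{prp:generically-uniquely-completable-k} applies verbatim in size $(m+1)\times(n+1)$.
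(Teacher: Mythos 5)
Your proof is correct, and it rests on the same key input as the paper's proof --- the classical localization fact (Proposition 2.4 in \cite{BrunsVetter:1988}) that $I_{r+2}(Z')$ extends to the ideal of $(r+1)$-minors of the Schur complement after inverting the corner entry $d=z'_{(m+1)(n+1)}$ --- but you organize the argument differently, and more cleanly. The paper's proof handles the two directions separately by chasing explicit relations: it writes unique completability as the existence of identities $q_{\alpha\beta}z_{\alpha\beta}+p_{\alpha\beta}=\sum f\det(\cdot)$, transports them through the isomorphism $\k[A]\cong\k[Y]$ in the ``only if'' direction, and in the ``if'' direction expands everything as polynomials in the border variables and extracts a single monomial coefficient using their algebraic independence. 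You instead prove once and for all the structural statement that $\k(\bar{Z}')=\k(\bar{Z})(b,c,d)$ is purely transcendental and that under this identification $\k(\bar{Z}'_{\Omega'})=\k(\bar{Z}_\Omega)(b,c,d)$ (the point being that the fully observed border row and column let you strip off the correction terms $d^{-1}b_ic_j$); both implications then follow from the elementary fact that for $K\subseteq L$ with $t_1,\dots,t_k$ algebraically independent over $L$ one has $K(t_1,\dots,t_k)=L(t_1,\dots,t_k)$ if and only if $K=L$. That field-theoretic lemma is exactly the coefficient comparison the paper performs by hand in its ``if'' direction, so nothing essentially new is needed; what your packaging buys is symmetry between the two directions and a reusable description of $\k(\bar{Z}'_{\Omega'})$, at the cost of having to verify carefully that the automorphism $a_{ij}\mapsto a_{ij}-d^{-1}b_ic_j$ identifies $\k[\bar{Z}']_{\bar d}$ with $\k[\bar{Z}][b,c,d][d^{-1}]$ and that the images of the observed coordinates are $\bar{z}_{ij}+d^{-1}b_ic_j$ --- both of which you do correctly. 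The minor points you flag at the end ($\bar d\neq 0$ since $I_{r+2}(Z')$ contains no linear forms, and the applicability of Proposition \ref{prp:generically-uniquely-completable-k} in size $(m+1)\times(n+1)$) are indeed all that remains, and they are routine.
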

\begin{proof}
Let $Z'$ be an $(m+1) \times (n+1)$ matrix of indeterminates, defined as 
$$Z' = \begin{bmatrix}
z_{11} & \cdots & z_{1n} & z_{1(n+1)}' \\
\vdots & \cdots & \vdots & \vdots \\
z_{m1} & \cdots & z_{mn} & z_{m(n+1)}' \\
z_{(m+1)1}' &  \cdots & z_{(m+1)n}' & z_{(m+1)(n+1)}'
\end{bmatrix}$$
that is, $Z'$ is obtained by appending to $Z$ an extra row and column of indeterminates. Denote by $I_{r+2}'(Z')$ the ideal of $(r+2)$-minors of $Z'$ in the polynomial ring $\k[Z']$. Then it is a classical tool in the theory of determinantal rings (e.g., see Proposition 2.4 in \cite{BrunsVetter:1988}), that in the localized ring $\k[Z']_{z_{(m+1)(n+1)}'}$, where the variable $z_{(m+1)(n+1)}'$ has been inverted, the ideal $I_{r+2}'(Z')$ extends to the ideal $I'_{r+1}(Y)$ of $(r+1)$-minors of the $m \times n$ matrix $$Y = (y_{ij})_{i \le m, j \le n}  = (z_{ij} - z'_{(m+1)j}z_{(m+1)(n+1)}'^{-1} z'_{i(n+1)})_{i \le m, j \le n};$$
that is $I_{r+2}'(Z') \k[Z']_{z_{(m+1)(n+1)}'} = I'_{r+1}(Y)$. The elements $$\{z_{ij} - z'_{(m+1)j}z_{(m+1)(n+1)}'^{-1} z'_{i(n+1)}\}_{i \le m, j \le n} \cup \{z_{ij}'\}$$ 
are jointly algebraically independent over $k$; this can seen by noting that these are $(m+1)(n+1)$ elements which generate the subalgebra $\k[Z']_{z_{(m+1)(n+1)}'}$ of $\k(Z')$, whose field of fractions is $\k(Z')$, and thus of transcendence degree $(m+1)(n+1)$ over $\k$. In particular, the entries of $Y$ are algebraically independent over $\k$. We will additionally denote by $I_{r+1}(Y)$ the ideal of $(r+1)$-minors of $Y$ in the algebra $\k[Y]$.

\emph{(only if)} Suppose that $\Omega$ is uniquely completable at rank $r$. With $A=(a_{ij})$ an $m \times n$ matrix of indeterminates, we have that $\k(\bar{A}_\Omega) = \k(\bar{A})$, where now $\k(\bar{A})$ is the fraction field of the intergral domain $\k[\bar{A}] = \k[A] / I_{r+1}(A)$. Thus for every $(\alpha,\beta) \in [m] \times [n] \setminus \Omega$ we have a relation of the form 
\begin{equation}\label{eq:relation}
q_{\alpha \beta}(A_\Omega) a_{\alpha \beta} + p_{\alpha \beta}(A_\Omega)  = \sum_{\I,\J} f_{\alpha \beta, \I, \J}(A) \det(A_{\I,\J}),
\end{equation}
where $q_{\alpha \beta}(A_\Omega), p_{\alpha \beta}(A_\Omega) \in \k[A_\Omega]$ with $q_{\alpha \beta} \not\in I_{r+1}(A)$, $f_{\alpha \beta, \I, \J}(A) \in \k[A]$, and the summation ranges over all subsets $\I \subseteq [m], \, \J \subseteq [n]$ of cardinality $r+1$. 

Consider the $\k$-algebra homomorphism $\k[A] \rightarrow \k[Y]$ that sends $a_{ij}$ to $y_{ij}$. As the $y_{ij}$'s are algebraically independent over $\k$, this establishes an isomorphism $\k[A] \cong \k[Y]$ and carries \eqref{eq:relation} to the relation 
$$ q_{\alpha \beta}(Y_\Omega) y_{\alpha \beta} + p_{\alpha \beta}(Y_\Omega)  = \sum_{\I,\J} f_{\alpha \beta, \I, \J}(Y) \det(Y_{\I,\J})$$
in $\k[Z']_{z_{(m+1)(n+1)}'}.$
As each $\det(Y_{\I,\J})$ is now an element of $I_{r+2}'(Z') \k[Z']_{z_{(m+1)(n+1)}'}$, each $f_{\alpha \beta, \I, \J}(Y)$ is an element of $\k[Z']_{z_{(m+1)(n+1)}'}$, $q_{\alpha \beta}(Y_\Omega) = \tilde{q}_{\alpha \beta}(Z'_{\Omega'})$ and $p_{\alpha \beta}(Y_\Omega) = \tilde{p}_{\alpha \beta}(Z'_{\Omega'})$ for suitable $\tilde{q}_{\alpha \beta}, \tilde{p}_{\alpha \beta} \in \k[Z'_{\Omega'}]_{z_{(m+1)(n+1)}'}$, we have 

$$\tilde{q}_{\alpha \beta}(Z'_{\Omega'}) z_{\alpha \beta} + \underbrace{\tilde{p}_{\alpha\beta}(Z'_{\Omega'}) -\tilde{q}_{\alpha \beta}(Z'_{\Omega'})z'_{(m+1)\beta}z_{(m+1)(n+1)}'^{-1} z'_{\alpha(n+1)}}_{\in \k[Z'_{\Omega'}]_{z_{(m+1)(n+1)}'}} \in I_{r+2}'(Z') \k[Z']_{z_{(m+1)(n+1)}'}.$$
As $\tilde{q}_{\alpha \beta}(Z'_{\Omega'}) \not\in I_{r+2}'(Z') \k[Z']_{z_{(m+1)(n+1)}'}$, this shows that $\bar{z}_{\alpha \beta} \in \k(\bar{Z}'_{\Omega'})$, where now $\bar{\cdot}$ indicates modulo $I_{r+2}'(Z')$; i.e. $\Omega'$ is uniquely completable at rank $r+1$.

\emph{(if)} Suppose $\Omega'$ is uniquely completable at rank $r+1$; then 
for every $(\alpha,\beta) \not\in \Omega'$ there are polynomials $q_{\alpha \beta}, p_{\alpha \beta} \in k[Z'_{\Omega'}]$ with $q_{\alpha \beta} \not\in I_{r+2}(Z')$, and  $f'_{\alpha \beta, \I', \J'} \in \k[Z']$ such that 
$$ q_{\alpha \beta} z_{\alpha \beta} + p_{\alpha \beta}  = \sum_{\I',\J'} f'_{\alpha \beta, \I', \J'} \det(Z'_{\I',\J'}),$$ 
where now the summation extends over all subsets $\I' \subseteq [m+1]$ and $\J' \subseteq [n+1]$ of cardinality $r+2$. 

Let us view this relation as a relation in $R=\k[Z']_{z_{(m+1)(n+1)}'} = \k[Y,\{z_{ij}'\}]_{z_{(m+1)(n+1)}'}$. Every $\det(Z'_{\I',\J'})$ is an element of $I_{r+1}(Y)$ and thus we can write it as a combination of $(r+1)$-minors of $Y$ with coefficients in $R$. Hence for every $\I \subseteq [m]$ and $\J\subseteq [n]$ of cardinality $r+1$ there are $f_{\alpha \beta, \I, \J} \in R$ such that 

$$\sum_{\I',\J'} f'_{\alpha \beta, \I', \J'} \det(Z'_{\I',\J'}) = \sum_{\I,\J} f_{\alpha \beta, \I, \J} \det(Y_{\I,\J}).$$
We also write $q_{\alpha \beta}, p_{\alpha \beta}$ as polynomials in the $z_{ij}'$'s with coefficients in $\k[Y]$ (allowing $z_{(m+1)(n+1)}'$ to appear with a negative exponent). Then not all coefficients of $q_{\alpha \beta}$ are in $I_{r+1}(Y)$, otherwise $q_{\alpha \beta}$ would be in $I_{r+2}(Z')$. So let $u$ be a monomial of $q_{\alpha \beta}$, whose coefficient, call it $q_{\alpha \beta u}$, is not in $I_{r+1}(Y)$. Denote by $\tilde{p}_{\alpha \beta u}$ the coefficient of $u$ in 
$$p_{\alpha \beta} + q_{\alpha \beta} z'_{(m+1)\beta}z_{(m+1)(n+1)}'^{-1} z'_{\alpha(n+1)},$$
and by $c_u$ the coefficient of $u$ in $\sum_{\I,\J} f_{\alpha \beta, \I, \J} \det(Y_{\I,\J})$. As the elements $Y,\{z_{ij}'\}$ are algebraically independent over $\k$, we must have $q_{\alpha \beta u} y_{\alpha \beta} + \tilde{p}_{\alpha \beta u}  = c_u$. Note that $c_u \in I_{r+1}(Y)$, $q_{\alpha \beta u} \not\in I_{r+1}(Y)$, and $q_{\alpha \beta u}, \, \tilde{p}_{\alpha \beta u} \in \k[Y_\Omega]$; this implies $\bar{y}_{\alpha \beta} \in \k(\bar{Y}_\Omega)$. 
\end{proof}

A similar statement can be made for base sets.

\begin{proposition} \label{prp:stitching-excision-bases}
Identifying $\Omega$ with its incidence matrix, $\Omega$ is a base of the matroid $\mathcal{M}(r,[m]\times [n])$ if and only if $\Omega'$ is a base of the matroid $\mathcal{M}(r+1,[m+1]\times [n+1])$, where
$$ \Omega' = 
\begin{bmatrix}
\Omega & 1_{m \times 1} \\
1_{1 \times n} & 1
\end{bmatrix}.$$ 
\end{proposition}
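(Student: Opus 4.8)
The plan is to translate the statement into one about transcendence degrees and then reuse, almost verbatim, the localization argument from the proof of Proposition~\ref{prp:stitching-excision-unique}.

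\textbf{Step 1: arithmetic reduction.} Observe that $|\Omega'| = |\Omega| + (m+n+1)$, since $\Omega'$ adjoins the entire last row and last column. On the other hand $\rank \mathcal{M}(r+1,[m+1]\times[n+1]) = (r+1)\big((m+1)+(n+1)-(r+1)\big) = (r+1)(m+n-r+1) = r(m+n-r) + (m+n+1) = \rank \mathcal{M}(r,[m]\times[n]) + (m+n+1)$. Hence $|\Omega| = \rank \mathcal{M}(r,[m]\times[n])$ if and only if $|\Omega'| = \rank \mathcal{M}(r+1,[m+1]\times[n+1])$. Since a base of a matroid is precisely an independent set whose cardinality equals the rank, it therefore suffices to show that $\Omega$ is independent in $\mathcal{M}(r,[m]\times[n])$ if and only if $\Omega'$ is independent in $\mathcal{M}(r+1,[m+1]\times[n+1])$; equivalently, that $\operatorname{tr.deg.}_\k \k(\bar{Z}'_{\Omega'}) = \operatorname{tr.deg.}_\k \k(\bar{Z}_\Omega) + (m+n+1)$, where $\bar{\cdot}$ denotes reduction modulo $I_{r+1}(Z)$ on the left and modulo $I'_{r+2}(Z')$ on the right.

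\textbf{Step 2: the change of variables.} Following the setup in the proof of Proposition~\ref{prp:stitching-excision-unique}, put $Y = (y_{ij})_{i\le m, j\le n}$ with $y_{ij} = z_{ij} - z'_{(m+1)j}\, z'_{i(n+1)}/z'_{(m+1)(n+1)}$. Recall from there that the $mn$ entries of $Y$ together with the $m+n+1$ border variables $\{z'_{(m+1)j}\}_{j\le n}$, $\{z'_{i(n+1)}\}_{i\le m}$, $z'_{(m+1)(n+1)}$ are algebraically independent over $\k$ and generate $\k[Z']_{z'_{(m+1)(n+1)}}$, that the assignment $y_{ij}\mapsto z_{ij}$ gives an isomorphism $\k[Y]\cong\k[Z]$ carrying $I_{r+1}(Y)$ to $I_{r+1}(Z)$, and that $I'_{r+2}(Z')\,\k[Z']_{z'_{(m+1)(n+1)}} = I'_{r+1}(Y)$. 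Since $z'_{(m+1)(n+1)}\notin I'_{r+2}(Z')$ (a single variable, while $I'_{r+2}(Z')$ is generated in degree $\ge 2$) and $\k[\bar{Z}']$ is a domain, inverting $\bar{z}'_{(m+1)(n+1)}$ is harmless, and passing to fraction fields yields $\k(\bar{Z}') = \k(\bar{Y})\big(\{\text{border variables}\}\big)$, a purely transcendental extension of $\k(\bar{Y})\cong\k(\bar{Z})$ of transcendence degree $m+n+1$.

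\textbf{Step 3: identifying $\k(\bar{Z}'_{\Omega'})$ and concluding.} The key point is that $\Omega'$ contains the entire last row and last column, so $Z'_{\Omega'}$ contains all $m+n+1$ border variables; combined with $z_{ij} = y_{ij} + z'_{(m+1)j}\,z'_{i(n+1)}/z'_{(m+1)(n+1)}$ for $(i,j)\in\Omega$, this shows $\bar{y}_{ij}\in\k(\bar{Z}'_{\Omega'})$ for every $(i,j)\in\Omega$, while conversely every generator $\bar{z}'_{ij}$ of $\k(\bar{Z}'_{\Omega'})$ lies in $\k(\bar{Y}_\Omega)\big(\{\text{border variables}\}\big)$. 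Hence $\k(\bar{Z}'_{\Omega'}) = \k(\bar{Y}_\Omega)\big(\{\text{border variables}\}\big)$, a purely transcendental extension of $\k(\bar{Y}_\Omega)\cong\k(\bar{Z}_\Omega)$ of transcendence degree $m+n+1$, so $\operatorname{tr.deg.}_\k \k(\bar{Z}'_{\Omega'}) = \operatorname{tr.deg.}_\k \k(\bar{Z}_\Omega) + (m+n+1)$, which is exactly what Step~1 requires. The main obstacle is the bookkeeping in Step~2 — correctly tracking which entries of $Z'$ become the $y_{ij}$'s and which become border variables, and that $\Omega'$ really does capture all the latter — but this is a streamlined rerun of machinery already established for Proposition~\ref{prp:stitching-excision-unique}, so no genuinely new difficulty is expected.
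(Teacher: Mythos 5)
Your proof is correct and follows essentially the same route as the paper's: both rest on the same rank count and on the change of variables from Proposition \ref{prp:stitching-excision-unique}, namely the identity $I_{r+2}(Z')\,\k[Z']_{z'_{(m+1)(n+1)}} = I'_{r+1}(Y)$ together with the algebraic independence of the entries of $Y$ and the border variables. The only cosmetic difference is that you conclude by comparing transcendence degrees across the purely transcendental extension $\k(\bar{Y}_\Omega) \subseteq \k(\bar{Z}'_{\Omega'})$, whereas the paper phrases the same fact as the equivalence of injectivity of $\k[Y_\Omega] \to \k[Y]/I_{r+1}(Y)$ with that of its base change along the faithfully flat extension $\k \to \k[\{z_{ij}'\}]_{z'_{(m+1)(n+1)}}$.
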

\begin{proof}
We adopt the notation in the proof of Proposition \ref{prp:stitching-excision-unique}. First of all, $\Omega$ and $\Omega'$ have the right number of ones to qualify for base sets of the respective matroids, since 
\begin{align*}
\rank \mathcal{M}(r,[m]\times [n]) +m+n+1 &= r(m+n-r)+m+n+1 \\
&= (r+1)m+(r+1)n-r^2+1\\
&=(r+1)(m+1)+(r+1)(n+1)-2(r+1)-r^2+1 \\
& = (r+1)(m+1)+(r+1)(n+1)-(r+1)^2\\
& = (r+1)(m+1+n+1-r-1) \\
&= \rank \mathcal{M}(r+1,[m+1]\times [n+1]).
\end{align*} Secondly, tensoring the map 
$\alpha: \k[Y_{\Omega}] \rightarrow \k[Y] / I_{r+1}(Y)$ over $\k$ with $\k[\{z_{ij}'\}]_{z'_{(m+1)(n+1)}}$, and using the fact that $I'_{r+1}(Y) = I_{r+2}(Z')\k[Z']_{z'_{(m+1)(n+1)}}$, gives the localization $$\beta: \k[Z'_{\Omega'}]_{z'_{(m+1)(n+1)}} \rightarrow \k[Z']_{z'_{(m+1)(n+1)}}/I'_{r+1}(Y) =\left(\k[Z']/I_{r+2}(Z')\right)_{z'_{(m+1)(n+1)}} $$
of the map $\k[Z'_{\Omega'}] \rightarrow \k[Z']/ I_{r+2}(Z')$. As $\k[\{z_{ij}'\}]_{z'_{(m+1)(n+1)}}$ is faithfully flat over $\k$, we have that $\alpha$ is injective if and only if $\beta$ is injective. Moreover, $\beta$ is injective if and only if $\k[Z'_{\Omega'}] \rightarrow \k[Z'] / I_{r+2}(Z')$ is injective.
\end{proof}

We have the following obvious criterion for unique completability.

\begin{proposition}\label{prop:our-game}
Suppose there is an ordering of $[m] \times [n]$ such that $(i,j)>(\alpha,\beta)$ for every $(i,j) \not\in \Omega$ and every $(\alpha,\beta) \in \Omega$, and such that for every $(i,j) \not\in \Omega$, there is an $(r+1)$-minor that is supported on $(i,j)$ together with elements strictly less than $(i,j)$. Then $\Omega$ is uniquely completable at rank $r$.
\end{proposition}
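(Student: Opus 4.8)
The plan is to reduce, via Proposition~\ref{prp:generically-uniquely-completable-k}, to proving the field equality $\k(\bar{Z}_\Omega) = \k(\bar{Z})$, and to establish this by peeling off the entries outside $\Omega$ one at a time in increasing order. Concretely, I would enumerate $[m]\times[n] \setminus \Omega = \{e_1 < e_2 < \cdots < e_N\}$ with respect to the given ordering. The first hypothesis says that every element of $\Omega$ is smaller than every $e_k$, so for each $k$ the set of positions strictly below $e_k$ is contained in $\Omega \cup \{e_1,\dots,e_{k-1}\}$. I would then prove by induction on $k$ that $\bar{z}_{e_k} \in \k(\bar{Z}_\Omega)$; since $\bar{z}_{\alpha\beta} \in \k(\bar{Z}_\Omega)$ trivially for $(\alpha,\beta) \in \Omega$ and $\k(\bar{Z})$ is generated over $\k$ by all the $\bar{z}_{ij}$, this yields $\k(\bar{Z}) = \k(\bar{Z}_\Omega)$ and finishes the proof.

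For the inductive step (the base case $k=1$ being identical, with an empty induction hypothesis), write $e_k = (i,j)$ and pick, by the second hypothesis, index sets $\I \subseteq [m]$, $\J \subseteq [n]$ with $|\I| = |\J| = r+1$, $(i,j) \in \I \times \J$, and every other element of $\I \times \J$ strictly less than $(i,j)$. Expanding the $(r+1)$-minor $\det(Z_{\I,\J})$ as a signed sum over transversals of $\I \times \J$ and grouping the monomials divisible by $z_{ij}$ separately from the rest, one obtains
\begin{equation*}
\det(Z_{\I,\J}) = \pm\, z_{ij}\, \det\bigl(Z_{\I \setminus \{i\},\, \J \setminus \{j\}}\bigr) + g,
\end{equation*}
where the cofactor $\det(Z_{\I \setminus \{i\}, \J \setminus \{j\}})$ is an $r$-minor in the variables indexed by $(\I\setminus\{i\})\times(\J\setminus\{j\})$, and $g \in \k[Z]$ is a polynomial in the variables indexed by $(\I\times\J)\setminus\{(i,j)\}$; in both cases every index occurring is strictly below $(i,j)$. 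Reducing modulo $I_{r+1}(Z)$ kills the left-hand side, so in the integral domain $\k[\bar{Z}]$ we get $\bar{z}_{ij}\cdot \bar{D} = \mp \bar{g}$, with $\bar{D} := \det(\bar{Z}_{\I\setminus\{i\},\J\setminus\{j\}})$.

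The crux is that $\bar{D} \neq 0$: since $I_{r+1}(Z)$ is a homogeneous ideal generated in degree $r+1$, it contains no nonzero polynomial of degree $r$, so the $r$-minor $\det(Z_{\I\setminus\{i\},\J\setminus\{j\}})$ is not in $I_{r+1}(Z)$. Hence $\bar{D}$ is invertible in $\k(\bar{Z})$ and $\bar{z}_{ij} = \mp\bar{g}/\bar{D}$. Finally, every variable appearing in $g$ or in the cofactor $r$-minor is indexed by some position $< e_k$, hence by an element of $\Omega \cup \{e_1,\dots,e_{k-1}\}$; by the induction hypothesis (or trivially, if the index lies in $\Omega$) the classes of all such variables lie in $\k(\bar{Z}_\Omega)$, so $\bar{g}, \bar{D} \in \k(\bar{Z}_\Omega)$ with $\bar{D} \neq 0$. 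Therefore $\bar{z}_{e_k} \in \k(\bar{Z}_\Omega)$, completing the induction.

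I do not anticipate a genuine obstacle: the argument is a triangular elimination driven entirely by the two hypotheses. The only points demanding a little care are the bookkeeping in the minor expansion --- verifying that, after isolating the $z_{ij}$ term, both the cofactor $r$-minor and the remainder $g$ involve only variables whose index is strictly below $(i,j)$ --- and the standard but essential observation that an $r$-minor, being a nonzero element of the domain $\k[\bar{Z}]$, is a unit in $\k(\bar{Z})$, which is precisely what allows one to solve the minor relation for $\bar{z}_{ij}$.
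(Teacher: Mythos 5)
Your proof is correct and is exactly the triangular-elimination argument the paper has in mind (the paper states this proposition without proof, calling it obvious): solving each minor relation for $\bar z_{ij}$ in increasing order of the given ordering, with the invertibility of the cofactor $r$-minor in $\k(\bar Z)$ justified by degree reasons, and concluding $\k(\bar Z_\Omega)=\k(\bar Z)$ via Proposition~\ref{prp:generically-uniquely-completable-k}. No gaps.
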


Unfortunately, not all uniquely completable bases come in this nice form. 

\begin{example}\label{ex:uniq-comp}
    The set 
    \[
    \Omega = \begin{bmatrix}
        0 & 1 & 1 & 1 & 0 & 0 \\
        1 & 0 & 1 & 1 & 0 & 0 \\
        1 & 1 & 0 & 0 & 1 & 1 \\
        1 & 1 & 0 & 0 & 1 & 1 \\
        0 & 0 & 1 & 1 & 0 & 1 \\
        0 & 0 & 1 & 1 & 1 & 0 \\
    \end{bmatrix}
    \] 
    is a base of the matroid $\mM(2,[6] \times [6])$, which is uniquely completable.
    This can be seen by computing the ``circuit polynomials'' for all $\T=\Omega \cup \{(i,j)\}$ with $(i,j) \notin \Omega$ using Macaulay2 \cite{M2}. That is, we compute the irreducible polynomial generating the principal ideal $E_\T$. These polynomials all turn out linear in the variable $z_{ij}$. Hence, there is precisely one solution for each $z_{ij}$ and we have unique completability. 

Proposition \ref{prop:our-game} does not apply to this example as there is no $3$-minor containing only one entry outside $\Omega$.
\end{example}

\subsection{Dimension and codimension $1$}

In ``dimension 1" we have a complete answer to the unique completability question.

\begin{theorem}
The following are equivalent.
\begin{enumerate}[label=(\roman*)]
\item $\Omega$ is uniquely completable at rank $1$.
\item $\Omega$ is finitely completable at rank $1$.
\item $|\omega_j| \ge 1 \ \forall \, j \in [n]$, and $\Omega$ supports a relaxed $(1,1,m)$-SLMF.
\end{enumerate}
\end{theorem}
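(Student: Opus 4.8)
The plan is to prove the cyclic chain (i) $\Rightarrow$ (ii) $\Rightarrow$ (iii) $\Rightarrow$ (i); the first implication is immediate, since a generic fiber consisting of a single point is a fortiori zero-dimensional. The engine for the other two is a dictionary between the inequalities \eqref{eq:rrmSLMF} with $r=1$ and the combinatorics of the bipartite graph $G_\Theta$ on $[m]\sqcup[n]$ whose edge set is $\Theta\subseteq[m]\times[n]$. For $\I\subseteq[m]$ one has $\sum_{j}\max(|\Theta\cap([m]\times\{j\})\cap(\I\times\{j\})|-1,0)=e_\I-c_\I$, where $e_\I$ is the number of edges of $G_\Theta$ meeting $\I$ and $c_\I$ is the number of columns they touch. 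Since a forest on $v$ vertices has at most $v-1$ edges, acyclicity of $G_\Theta$ gives $e_\I-c_\I\le|\I|-1$ for every $\I$; conversely a cycle of $G_\Theta$ lying on a row set $\I_C$ makes each of its columns meet $\I_C$ at least twice, whence $e_{\I_C}-c_{\I_C}\ge|\I_C|>|\I_C|-1$ with $|\I_C|\ge2$. So the plan is to record that \eqref{eq:rrmSLMF} for all $\I$ with $|\I|>1$ holds exactly when $G_\Theta$ is a forest, while the equality clause for $\I=[m]$ reads $|\Theta|=m-1+\#\{j:\Theta\cap([m]\times\{j\})\ne\emptyset\}$.

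For (ii) $\Rightarrow$ (iii) I would first invoke Proposition \ref{prp:<r} (whose proof excludes even finite completability when some column carries fewer than $r$ entries) to get $|\omega_j|\ge1$ for all $j$, and then use that finite completability forces $\Omega$ to contain a base $\Omega_0$ of $\mM(1,[m]\times[n])$. By the $r=1$ description recalled in the proof of Proposition \ref{prp:conjecture} (from \cite{singer2010uniqueness}), $G_{\Omega_0}$ is a spanning tree on $[m]\sqcup[n]$, hence a forest touching all $n$ columns; the dictionary then shows $\Omega_0$ satisfies \eqref{eq:rrmSLMF}, the equality clause being $|\Omega_0|=m+n-1$. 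Thus $\Omega$ supports a relaxed $(1,1,m)$-SLMF.

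For (iii) $\Rightarrow$ (i), given a relaxed $(1,1,m)$-SLMF $\Omega_0\subseteq\Omega$ with column sets $\omega_{0,j}$, the plan is to put $J'=\{j:\omega_{0,j}\ne\emptyset\}$ and read off from the dictionary that $G_{\Omega_0}$ is a forest with $|\Omega_0|=m-1+|J'|$ edges on at most $m+|J'|$ vertices --- which forces it to be a spanning tree on $[m]\sqcup J'$, every row included. Using $|\omega_j|\ge1$ for $j\in[n]\setminus J'$, I would adjoin to $\Omega_0$ one edge of $\Omega$ in each such column (attaching that column to a row already in the tree) to obtain $\Omega_0\subseteq\Omega_1\subseteq\Omega$ with $G_{\Omega_1}$ connected and spanning $[m]\sqcup[n]$. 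Then I would pass to the classical parametrization of the rank-$1$ locus $\k[Z]/I_2(Z)\xrightarrow{\sim}\k[\,u_iv_j\mid(i,j)\in[m]\times[n]\,]\subseteq\k[u_1,\dots,u_m,v_1,\dots,v_n]$, $\bar{z}_{ij}\mapsto u_iv_j$, under which $\k(\bar{Z})=\k\big(u_1v_1,\{u_i/u_1\}_i,\{v_j/v_1\}_j\big)$: connectedness of $G_{\Omega_1}$ writes each $u_i/u_{i'}$ and $v_j/v_{j'}$ as a product of quotients $\bar{z}_{ab}/\bar{z}_{a'b}$ with $(a,b),(a',b)\in\Omega_1$, and then $u_1v_1=\bar{z}_{ab}(u_1/u_a)(v_1/v_b)$ for any $(a,b)\in\Omega_1$, so that $\k(\bar{Z})=\k(\bar{Z}_{\Omega_1})\subseteq\k(\bar{Z}_\Omega)$. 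Proposition \ref{prp:generically-uniquely-completable-k} then gives unique completability at rank $1$; alternatively (i) $\Leftrightarrow$ (ii) can be cited directly from \cite{singer2010uniqueness}.

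The main obstacle I anticipate is the graph dictionary itself --- in particular realizing that \eqref{eq:rrmSLMF} must be used for \emph{all} row subsets $\I$, not just $\I=[m]$, to force acyclicity of $G_{\Omega_0}$, and that the columns of $\Omega$ missed by $\Omega_0$ must be reconnected (via $|\omega_j|\ge1$) before the parametrization argument applies. Everything else is either immediate or standard determinantal-ring theory; the standing hypothesis $\operatorname{char}\k=0$ intervenes only through Proposition \ref{prp:generically-uniquely-completable-k}, the combinatorial and parametrization steps being characteristic-free.
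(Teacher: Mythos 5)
Your proposal is correct, and its skeleton (the cyclic chain (i)$\Rightarrow$(ii)$\Rightarrow$(iii)$\Rightarrow$(i), with $|\omega_j|\ge 1$ coming from Proposition \ref{prp:<r} and the columns outside the SLMF's support handled last) matches the paper's. The difference is in how the two nontrivial implications are discharged. The paper outsources both: (ii)$\Rightarrow$(iii) is a one-line citation of the necessity theorem from \cite{Tsakiris-AMS-2023}, and the heart of (iii)$\Rightarrow$(i) is the citation of Proposition 4 of \cite{Tsakiris-AMS-2023} for unique completability of the relaxed $(1,1,m)$-SLMF $\Omega_0\subseteq[m]\times\J$, after which the remaining columns are filled in one at a time via the $2$-minor relation $\bar z_{\alpha'j}\bar z_{\alpha\beta}=\bar z_{\alpha'\beta}\bar z_{\alpha j}$. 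You instead make everything self-contained: your dictionary $\sum_j\max(|\Theta\cap(\I\times\{j\})|-1,0)=e_\I-c_\I$ correctly identifies the relaxed $(1,1,m)$-SLMF condition with $G_{\Omega_0}$ being a spanning tree on $[m]\sqcup\J'$, and the Segre parametrization $\bar z_{ij}\mapsto u_iv_j$ together with connectivity of $G_{\Omega_1}$ gives $\k(\bar Z_{\Omega_1})=\k(\bar Z)$ directly; your step of grafting one edge per missing column onto the tree is the graph-theoretic counterpart of the paper's $2$-minor step. What your route buys is independence from the external reference and an explicit combinatorial picture of why the SLMF condition is what it is at $r=1$; what the paper's route buys is brevity. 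Both conclude via Proposition \ref{prp:generically-uniquely-completable-k}, and your closing observation about where $\operatorname{char}\k=0$ enters is accurate.
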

\begin{proof}
\emph{(i) $\Rightarrow$ (ii)} Trivial.

\emph{(ii) $\Rightarrow$ (iii)} Suppose $\Omega$ is finitely completable at rank $1$. Then $\Omega$ contains a base of the algebraic matroid $\mathcal{M}(1,[m] \times [n])$. By Theorem 1 in \cite{Tsakiris-AMS-2023}, $\Omega$ supports a relaxed $(1,1,m)$-SLMF. 

\emph{(iii) $\Rightarrow$ (i)} Suppose that $\Omega$ supports a relaxed $(1,1,m)$-SLMF $\Omega_0$. In particular, there exists a subset $\J \subseteq [n]$ of column indices, such that $\Omega_0 \subseteq \Omega_{\J}$. By Proposition 4 in \cite{Tsakiris-AMS-2023}, $\Omega_0 \subseteq [m] \times \J$ is uniquely completable, and so $\k(\bar{Z}_{\Omega_0}) = \k(\bar{Z}_{[m] \times \J})$. Now let $(\alpha,\beta) \not\in \Omega$ with $\beta \not\in \J$. As by hypothesis $|\omega_{\beta}| \ge 1$, there exists $\alpha' \in [m]$ such that $(\alpha',\beta) \in \Omega$. Pick any $j \in \J$; in $\k(\bar{Z})$ we trivially have the relation $\bar{z}_{\alpha' j} \bar{z}_{\alpha \beta}  - \bar{z}_{\alpha' \beta} \bar{z}_{\alpha j}=0$ or equivalently $\bar{z}_{\alpha \beta} = (\bar{z}_{\alpha' \beta} \bar{z}_{\alpha j}) / \bar{z}_{\alpha' j}$. As $\bar{z}_{\alpha' \beta} \in \bar{Z}_{\Omega}$ and both $\bar{z}_{\alpha j}, \, \bar{z}_{\alpha' j} \in \k(\bar{Z}_{[m] \times \J}) = \k(\bar{Z}_{\Omega_0}) \subseteq \k(\bar{Z}_{\Omega})$, we infer $\bar{z}_{\alpha \beta} \in k(\bar{Z}_{\Omega})$. 
\end{proof}

For the other extreme of ``codimension $1$", we also have a complete answer.

\begin{theorem} 
With the convention that $m \le n$, the following are equivalent.
\begin{enumerate}[label=(\roman*)]
\item $\Omega$ is uniquely completable at rank $m-1$.
\item $\Omega$ is finitely completable at rank $m-1$.
\item $|\omega_j| \ge m-1 \ \forall \, j \in [n]$, and there exists $\J \subseteq [n]$ with $|\J|=m-1$ and $[m] \times \J \subseteq \Omega$.
\end{enumerate}
\end{theorem}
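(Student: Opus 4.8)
The natural route is the implication cycle $(i)\Rightarrow(ii)\Rightarrow(iii)\Rightarrow(i)$, and $(i)\Rightarrow(ii)$ is immediate since a single-point generic fiber is in particular zero-dimensional.

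For $(ii)\Rightarrow(iii)$ I would combine Proposition \ref{prp:<r} with a counting argument. As $r=m-1$, Proposition \ref{prp:<r} forces $|\omega_j|\ge m-1$ for every $j\in[n]$, since otherwise $\Omega$ is not even finitely completable; hence each column has exactly $m-1$ or $m$ entries. Writing $k$ for the number of full columns (those with $|\omega_j|=m$), we get $|\Omega|=km+(n-k)(m-1)=(m-1)n+k$. Being finitely completable, $\Omega$ contains a base, so $|\Omega|\ge\rank\mathcal{M}(m-1,[m]\times [n])=(m-1)(n+1)=(m-1)n+(m-1)$, and therefore $k\ge m-1$. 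Taking $\J$ to consist of any $m-1$ of the full columns yields $[m]\times\J\subseteq\Omega$, which together with $|\omega_j|\ge m-1$ is exactly $(iii)$.

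For $(iii)\Rightarrow(i)$ I would check the hypothesis of the obvious criterion of Proposition \ref{prop:our-game}. Order $[m]\times[n]$ so that every element of $\Omega$ is smaller than every element outside $\Omega$, with ties broken arbitrarily. Fix $(\alpha,\beta)\notin\Omega$. Since $|\omega_\beta|\ge m-1$ while $\alpha\notin\omega_\beta$, necessarily $\omega_\beta=[m]\setminus\{\alpha\}$; in particular column $\beta$ is not full, so $\beta\notin\J$ and $\J\cup\{\beta\}$ is a set of $m$ distinct columns. The $(r+1)=m$-minor $\det(Z_{[m],\,\J\cup\{\beta\}})$ has support contained in $([m]\times\J)\cup([m]\times\{\beta\})$, and every entry of that support other than $(\alpha,\beta)$ lies in $\Omega$: the block $[m]\times\J$ is inside $\Omega$ by $(iii)$, and $(i,\beta)\in\Omega$ for every $i\neq\alpha$ because $\omega_\beta=[m]\setminus\{\alpha\}$. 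Thus this minor is supported on $(\alpha,\beta)$ together with strictly smaller elements, so Proposition \ref{prop:our-game} applies and $\Omega$ is uniquely completable at rank $m-1$. (Concretely, Laplace expansion of this minor along column $\beta$ solves $\bar{z}_{\alpha\beta}$ as a rational function of the entries indexed by $\Omega$, with denominator the $(m-1)$-minor $\det(Z_{[m]\setminus\{\alpha\},\J})$, which is nonzero modulo the degree-$m$ homogeneous ideal $I_m(Z)$; thus $\k(\bar{Z}_\Omega)=\k(\bar{Z})$, which is unique completability by Proposition \ref{prp:generically-uniquely-completable-k}.)

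The only step with genuine content is $(ii)\Rightarrow(iii)$, and there the one point to be careful about is to invoke Proposition \ref{prp:<r} on $\Omega$ itself, so that every column is known to have at least $m-1$ entries before counting, rather than on a sub-base, since a priori a base of $\mathcal{M}(m-1,[m]\times [n])$ could have sparse columns. In $(iii)\Rightarrow(i)$ nothing is hard beyond the observation that a missing entry $(\alpha,\beta)$ forces column $\beta$ to be missing only that entry, which is exactly what lets the $m$-minor on columns $\J\cup\{\beta\}$ express $\bar{z}_{\alpha\beta}$ over $\k(\bar{Z}_\Omega)$.
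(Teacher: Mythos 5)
Your proof is correct, and it reorganizes the argument in a way that genuinely differs from the paper's on one implication. You close the single cycle (i)$\Rightarrow$(ii)$\Rightarrow$(iii)$\Rightarrow$(i), whereas the paper proves (i)$\Leftrightarrow$(iii) and (i)$\Leftrightarrow$(ii) separately. The real divergence is how condition (iii) is extracted: the paper obtains (i)$\Rightarrow$(iii) by inductively stripping off columns with exactly $m-1$ observed entries via Proposition \ref{prp:column-r}, and obtains it from (ii) by passing to a base $\Omega'\subseteq\Omega$ and invoking Lemmas 7 and 10 of the cited reference before counting on the reduced set. You instead count globally on $\Omega$ itself: once every column is known to carry $m-1$ or $m$ entries, the identity $|\Omega|=(m-1)n+k$ together with $|\Omega|\ge(m-1)(n+1)$ forces $k\ge m-1$ full columns. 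This is shorter and bypasses the column-reduction machinery entirely; the only caveat is that you are using the content of the \emph{proof} of Proposition \ref{prp:<r} (sparse columns obstruct even finite completability), since its statement only speaks of unique completability — you flag this correctly, and the paper's own proof of that proposition supplies the stronger fact. Your (iii)$\Rightarrow$(i) is in substance identical to the paper's: both rest on the observation that $|\omega_\beta|\ge m-1$ forces $\omega_\beta=[m]\setminus\{\alpha\}$, so the $m$-minor on columns $\J\cup\{\beta\}$ has $(\alpha,\beta)$ as its only unobserved entry and its cofactor $\det(\bar{Z}_{[m]\setminus\{\alpha\},\J})$ is nonzero modulo $I_m(Z)$ by degree; the paper expands the vanishing minor directly, while you additionally package it into Proposition \ref{prop:our-game}, which changes nothing essential.
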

\begin{proof}
\emph{(i) $\Rightarrow$ (iii)} Suppose $\Omega$ is uniquely completable at rank $m-1$. By Proposition \ref{prp:<r} we have $|\omega_j| \ge m-1, \forall j \in [n]$. Let $\J' \subseteq [n]$ be the column indices for which $|\omega_j| > m-1$ (thus necessarily $|\omega_j|=m$). By inductive application of Proposition \ref{prp:column-r} we have that $|\J'| \ge m-1$ and $\Omega_{\J'}$ is uniquely completable at rank $m-1$. By definition of $\J'$ we have $\Omega_{\J'}=[m] \times \J'$; hence there exists a subset $\J \subseteq \J'$ of cardinality $m-1$ such that $\Omega_{\J} = [m] \times \J$.

\emph{(iii) $\Rightarrow$ (i)} Suppose that $|\omega_j| \ge m-1, \forall j \in [n]$ and there exists $\J \subseteq [n]$ with $|\J| = m-1$ and $\Omega_\J = [m] \times \J$. Pick any $(\alpha,\beta) \not\in \Omega$ (necessarily $\beta \not\in \J$). Trivially $\det(\bar{Z}_{[m], \J \cup \{\beta\}}) = 0$ in $\k(\bar{Z})$, and from this relation one immediately gets that $\bar{z}_{\alpha \beta} \in \k(\bar{Z}_\Omega)$.

\emph{(i) $\Rightarrow$ (ii)} Trivial. 

\emph{(ii) $\Rightarrow$ (i)} Suppose $\Omega$ is finitely completable; then it contains a base $\Omega'  \subseteq \Omega$ of the algebraic matroid $\mathcal{M}(m-1,[m] \times [n])$. Let $\J \subseteq [n]$ be indexing the $\omega'_{j}$'s for which $|\omega'_{j}| = m$. As $\Omega'$ is finitely completable, necessarily $|\omega'_{j}| \ge m-1, \, \forall j \in [n]$ (Lemma 10 in \cite{Tsakiris-AMS-2023}); now Lemma 7 in \cite{Tsakiris-AMS-2023} gives that $\Omega'_{\J} = \Omega' \cap ([m] \times \J)$ is a base of the algebraic matroid $\mathcal{M}(m-1,[m] \times \J)$, and thus must have cardinality equal to the rank of that matroid. On the other hand, by definition of $\J$ we have $\Omega'_{\J} = [m] \times \J$. We obtain the equality $(m-1)(m+(|\J|)-m+1) = m (|\J|)$, which is equivalent to $|\J| = m-1$. 
\end{proof}

\begin{remark}
The equivalence of (i) and (ii) in the previous two theorems also follows from Theorem 1.8 in \cite{Larson2024B}, or more simply, as pointed out by Matt Larson, from the existence of square-free universal Gr\"obner bases for $2$-minors and maximal minors.
\end{remark}

\subsection{Codimension $2$}

We now turn to ``codimension 2".

\begin{lemma} \label{lem:codim2-4x4}
All base sets of the matroid $\mM(2,[4]\times [4])$ are uniquely completable at rank $2$, except those that are row and column permutations of
$$
\begin{bmatrix}
0 & 1 & 1 & 1 \\
1 & 0 & 1 & 1 \\
1 & 1 & 0 & 1 \\
1 & 1 & 1 & 0
\end{bmatrix}.$$ 
\end{lemma}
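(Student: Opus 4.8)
The plan is to classify the bases of $\mM(2,[4]\times[4])$ — which all have size $r(m+n-r)=12$, hence exactly four zeros in their incidence matrices — according to the row and column sums, using throughout that the matroid and the property of unique completability are invariant under transposition and under permutations of rows and of columns. Since every base is finitely completable, Proposition~\ref{prp:<r} forces each row and each column of a base $\Omega$ to contain at least $r=2$ ones, so there are two cases. \emph{Case 1: some row or column of $\Omega$ contains exactly two ones.} After transposing if necessary, assume it is a column $j$. By Proposition~\ref{prp:column-r}, $\Omega$ is uniquely completable at rank $2$ if and only if $\Omega_{[4]\setminus\{j\}}$ is (the hypothesis $n-1\ge r$ reads $3\ge2$). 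Viewed inside the $4\times3$ grid, $\Omega_{[4]\setminus\{j\}}$ is a size-$10$ subset of the base $\Omega$, hence independent, and since $10=\rank \mM(2,[4]\times[3])$ it is a base of $\mM(2,[4]\times[3])$; in particular it is finitely completable. As $\min\{m,n\}=3=r+1$ for that matroid, the codimension-$1$ theorem proved immediately above this lemma (applied after transposing so that $m\le n$) gives that finite completability implies unique completability, so $\Omega$ is uniquely completable.

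\emph{Case 2: every row and every column of $\Omega$ has at least three ones.} Since the total is $12$, each row and column has exactly three ones, so the four zeros of $\Omega$ form a permutation matrix; equivalently $\Omega$ is a row-and-column permutation of $J-I$, the matrix in the statement, and by permutation-invariance it suffices to treat $\Omega=J-I$ itself. That $J-I$ is a base follows from Theorem~\ref{theorem:SLMF-sufficient}: it has the right number ($12$) of ones, and for \emph{any} partition $[4]=\J_1\cup\J_2$ into two pairs the two zeros of the $4\times2$ pattern $(J-I)\cap([4]\times\J_\ell)$ lie in distinct rows, whence a direct check of Definition~\ref{definition:SLMF} shows each $(J-I)\cap([4]\times\J_\ell)$ is a relaxed $(1,2,4)$-SLMF (the left-hand side of~\eqref{eq:rrmSLMF} equals $2=|\I|-r$ when $\I=[4]$, and is at most $1=|\I|-r$ when $|\I|=3$, since then at most one of the two columns has all three of its ones inside $\I$).

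To show $J-I$ is not uniquely completable I would compute the generic fibre of $\pi_\Omega$ explicitly. Write a generic rank-$\le2$ matrix as one whose third and fourth rows are combinations $\lambda_\ell R_1+\mu_\ell R_2$ of its first two rows, and impose that the twelve off-diagonal entries equal the given generic values; this yields six equations in the six unknowns $d_1,d_2,\lambda_3,\mu_3,\lambda_4,\mu_4$, where $d_1,d_2$ are the two unknown diagonal entries in the first two rows. Eliminating $\mu_3,\lambda_4,d_1,d_2$ reduces the system to two equations in $(\lambda_3,\mu_4)$, each of total degree $2$ with quadratic part a nonzero (for generic data) multiple of $\lambda_3\mu_4$. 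The projective closures of the resulting two conics pass through the two points $[0:1:0]$ and $[0:0:1]$ at infinity and meet transversally there, so by B\'ezout they have exactly $4-2=2$ common solutions in the affine plane, and for generic data both correspond to honest rank-$2$ completions. Hence the generic fibre of $\pi_\Omega$ has two points, $\k(\bar{Z}_\Omega)\subsetneq\k(\bar{Z})$, and $\Omega$ is not uniquely completable by Proposition~\ref{prp:generically-uniquely-completable-k}. (Alternatively, exactly as in Example~\ref{ex:uniq-comp}, one checks in Macaulay2 that the circuit polynomial of $\Omega\cup\{(1,1)\}$ has degree $2$ in $z_{11}$.)

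I expect this last step to be the main obstacle: the elimination is a somewhat delicate hand computation, and one must carefully verify that the two conics share \emph{exactly} their two points at infinity and intersect transversally there — otherwise the fibre count could collapse to a single point and the argument would fail. Everything else is routine bookkeeping built on Propositions~\ref{prp:<r} and~\ref{prp:column-r}, Theorem~\ref{theorem:SLMF-sufficient}, and the codimension-$1$ theorem.
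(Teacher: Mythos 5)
Your Case 1 is correct and takes a slightly different route from the paper: after removing the column with exactly two ones via Proposition \ref{prp:column-r}, you invoke the codimension-$1$ theorem on the resulting $4\times 3$ base (transposed so that $m\le n$ and $r=m-1$), whereas the paper instead excises a full row and a full column using Propositions \ref{prp:stitching-excision-unique} and \ref{prp:stitching-excision-bases} and finishes at rank $1$. Both are valid; yours is marginally shorter, since the codimension-$1$ equivalence of finite and unique completability is already available at that point in the paper. Your reduction of Case 2 to the single pattern with one zero per row and column is also correct (and your verification that this pattern is a base, while not strictly necessary if one quotes the known characterization of codimension-$2$ bases, is fine).

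The gap is exactly where you flag it: you never actually prove that this pattern fails to be uniquely completable, which is the substantive content of the lemma. The B\'ezout plan is only a plan --- you have not produced the two conics, nor verified that their only common points at infinity are $[0:1:0]$ and $[0:0:1]$, that the intersections there are transversal, that the two remaining affine intersection points are distinct for generic data, and that they yield distinct rank-$\le 2$ completions; any of these could fail and collapse the fibre to a single point. The paper closes this step concretely: the $3$-minors $[124|123]$ and $[123|124]$ involve only $z_{11}$ and $z_{22}$ outside $\Omega$; eliminating first the product $z_{11}z_{22}$ and then $z_{22}$ yields an explicit polynomial $p$ with $z_{43}p\in I_3(Z)$, hence $p\in I_3(Z)$ by primality, and $p$ is quadratic in $z_{11}$ with coefficients in $\k[Z_\Omega]$. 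Since $\Omega$ is a base, $\k[Z_\Omega]$ injects into $\k[\bar Z]$ as a polynomial ring, so once $p$ is shown irreducible it is (up to a unit) the minimal polynomial of $\bar z_{11}$ over $\k(\bar Z_\Omega)$, giving a degree-$2$ extension and hence non-uniqueness; irreducibility is checked by a rational specialization producing $3z_{11}^2-3z_{11}+1$, which has negative discriminant. Your parenthetical Macaulay2 alternative would also suffice (a degree-$2$ generator of the principal elimination ideal of $\Omega\cup\{(1,1)\}$ forces the minimal polynomial of $\bar z_{11}$ to have degree $2$ after localizing at $\k[Z_\Omega]\setminus\{0\}$), but as written it is an assertion, not a computation.
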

\begin{proof}
We recall that a characterization of base sets in codimension $2$ is known from  Proposition 2 in \cite{Tsakiris-AMS-2023}. Every base $\Omega$ of $\mathbb{M}(2,[4]\times [4])$ needs to have (in terms of incidence matrices) $12$ ones. Moreover, an $\Omega$ with $12$ ones is a base if and only if every row and every column has at most $2$ zeros.

We consider first the case where some row or column has two zeros. Without loss of generality we may assume that the last column of $\Omega$ has exactly $2$ zeros (and thus exactly $2$ ones). By Proposition \ref{prp:column-r}, we have that $\Omega$ is uniquely completable at rank $2$ if and only if $\Omega'$ is uniquely completable at rank $2$, where $\Omega'$ is obtained by deleting from $\Omega$ the last column. Now $\Omega'$ is a base of the matroid $\mathbb{M}(2,[4]\times [3])$ and thus has exactly $2$ zeros. Since $\Omega'$ is a $4 \times 3$ matrix with exactly $2$ zeros, there must exist a row (say the last row) and a column (say the last column) full of ones. Calling $\Omega''$ the matrix that we obtain by deleting this row and column from $\Omega'$, Proposition \ref{prp:stitching-excision-unique} gives that $\Omega'$ is uniquely completable at rank $2$ if and only if $\Omega''$ is uniquely completable at rank $1$. By Proposition \ref{prp:stitching-excision-bases}, $\Omega''$ is a base of the matroid $\mathbb{M}(1,[3]\times [2])$ and thus its two zeros do not occur in the same row. As there is a row full of ones, every zero occurs in a $2$-minor where the rest elements are ones, and it follows that $\Omega''$ is uniquely completable at rank $1$; equivalently $\Omega$ is uniquely completable at rank $2$.

We now consider the case where there is a single zero per row and column in $\Omega$, and thus we may assume that 
$$\Omega = 
\begin{bmatrix}
0 & 1 & 1 & 1 \\
1 & 0 & 1 & 1 \\
1 & 1 & 0 & 1 \\
1 & 1 & 1 & 0
\end{bmatrix}.$$ 
We will show that this $\Omega$ is not uniquely completable. For this, we will produce an irreducible polynomial which lies in $I_3(Z)$, and it is quadratic in $z_{11}$ with coefficients in $k[Z_\Omega]$. Necessarily, this must be the circuit polynomial of $\bar{Z}_{\Omega \cup \{(1,1)\}}$. Towards this end, the $3$-minor $[124|123]$ of $Z$ involves only the variables $z_{11}$ and $z_{22}$ outside of $\Omega$ and it is quadratic and square-free in these variables. The same is true for the $3$-minor $[123|124]$ of $Z$. Eliminating the quadratic term $z_{11} z_{22}$ gives a degree-$1$ polynomial in the variables $z_{11}$ and $z_{22}$, which we can then use to eliminate the variable $z_{22}$ from say $[124|123]$. This calculation yields that $z_{43} p \in I_3(Z)$, where 
\begin{align*}
p&=z_{11}^2 (z_{23}z_{42}z_{34}-z_{24} z_{32} z_{43}) + z_{11}p_1+p_2 \quad \text{for} \\
p_1&=-z_{21}z_{13}z_{42}z_{34}-z_{41}z_{12}z_{23}z_{34}+z_{21}z_{14}z_{32}z_{43} +z_{31}z_{12}z_{24}z_{43}-z_{23}z_{42}z_{31}z_{14}+z_{41}z_{13}z_{24}z_{32} \\
p_2&=-z_{41}z_{13}z_{21}z_{14}z_{32}-z_{41}z_{13}z_{31}z_{12}z_{24}-z_{21}z_{12}z_{43}z_{31}z_{14}+z_{21}z_{12}z_{41}z_{13}z_{34}+z_{21}z_{13}z_{42}z_{31}z_{14}\\
&+z_{41}z_{12}z_{23}z_{31}z_{14}.
\end{align*}
 As $I_3(Z)$ is a prime ideal, $p \in I_3(Z)$. We now show that $p$ is irreducible in $k[Z_{\Omega \cup \{(1,1)\}}]$. If $p$ were not irreducible, it would be the product of two degree-$1$ polynomials in $z_{11}$ with coefficients in $k[Z_\Omega]$, and thus for any specialization of the variables $Z_\Omega$ in $\mathbb{Q}$ which gives a quadric, the specialized quadric would have its roots in $\mathbb{Q}$. 
However, for the specialization 
\begin{align*}
z_{12}, \, z_{41} \mapsto 0 \\
z_{13}, \, z_{14}, \, z_{21}, \,  z_{24}, \, z_{31}, \, z_{32}, \, z_{42}, z_{43} \mapsto 1 \\
z_{23}, \, z_{34} \mapsto 2
\end{align*} we obtain the quadric $3 z_{11}^2 - 3 z_{11} +1$, which has negative discriminant.
\end{proof}

By what we have proved so far, the $m \times m$ codimension-2 readily follows.

\begin{theorem}\label{prp:codim-2-square}
All bases of the matroid $\mM(m-2,[m]\times [m])$ are uniquely completable except those for which no two zeros occur in the same row or column.
\end{theorem}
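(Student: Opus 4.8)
The plan is to bootstrap from the $4\times 4$ base case (Lemma~\ref{lem:codim2-4x4}) via the reduction operations supplied by Propositions~\ref{prp:column-r} and~\ref{prp:stitching-excision-unique}. Since $\rank\mM(m-2,[m]\times[m]) = (m-2)(m+2) = m^2-4$, a base $\Omega$ has exactly four zeros in its incidence matrix. Applying Lemma~\ref{lemma:size_dep} with $\I=[m]$ and $\J$ any set of $m-1$ columns, where $(|\I|-r)(|\J|-r)=2$, shows that every $m\times(m-1)$ submatrix has at least two zeros, so no column carries three or four zeros; transposing, neither does any row. I would then split into two cases according to whether some row or column carries exactly two zeros. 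The cases $m\le 4$ are disposed of first: $m=3$ is degenerate (four zeros cannot lie in four distinct rows, so the first case below is vacuous, and all bases of $\mM(1,[3]\times[3])$ are uniquely completable by the rank-$1$ results), and $m=4$ is Lemma~\ref{lem:codim2-4x4} verbatim. So assume $m\ge 5$.

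\emph{The exceptional bases.} Suppose every row and column of $\Omega$ has at most one zero; then the four zeros occupy four distinct rows and four distinct columns, so after a permutation of rows and columns $\Omega$ has zeros exactly at $(1,1),(2,2),(3,3),(4,4)$ and ones elsewhere. As $m\ge 5$, the last row and last column of $\Omega$ are all-ones, so $\Omega = \left[\begin{smallmatrix}\Omega_0 & 1\\ 1 & 1\end{smallmatrix}\right]$ with $\Omega_0$ of the same shape on one fewer row and column. Iterating Proposition~\ref{prp:stitching-excision-unique} a total of $m-4$ times, $\Omega$ is uniquely completable at rank $m-2$ if and only if $\left[\begin{smallmatrix}0&1&1&1\\1&0&1&1\\1&1&0&1\\1&1&1&0\end{smallmatrix}\right]$ is uniquely completable at rank $2$, which it is not by Lemma~\ref{lem:codim2-4x4}. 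Hence these $\Omega$ are not uniquely completable.

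\emph{All other bases.} Now suppose some line carries exactly two zeros; transposing if necessary, let column $j$ satisfy $|\omega_j| = m-2 = r$. Deleting it, Proposition~\ref{prp:column-r} gives that $\Omega$ is uniquely completable at rank $m-2$ if and only if the $m\times(m-1)$ set $\Omega^{(1)} = \Omega_{[m]\setminus\{j\}}$ is; moreover $\Omega^{(1)}$ is a base of $\mM(m-2,[m]\times([m]\setminus\{j\}))$ (Lemma~7 of~\cite{Tsakiris-AMS-2023}), so it has at least $r$ ones in each of its length-$(m-1)$ rows, hence at most one zero per row, and its two zeros lie in two distinct rows. Since $m\ge 5$, $\Omega^{(1)}$ has an all-ones row and, as its two zeros occupy at most two of its $m-1\ge 4$ columns, an all-ones column; Proposition~\ref{prp:stitching-excision-unique} reduces unique completability of $\Omega^{(1)}$ to that of an $(m-1)\times(m-2)$ set $\Omega^{(2)}$ at rank $m-3$, a submatrix of $\Omega^{(1)}$ still carrying exactly two zeros in two distinct rows. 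Each of those rows of $\Omega^{(2)}$ has exactly $(m-2)-1 = m-3 = r$ ones, so two applications of the transpose of Proposition~\ref{prp:column-r} peel them off and land on the all-ones $(m-3)\times(m-2)$ array at rank $m-3$, which is uniquely completable since $I_{m-2}$ of an $(m-3)\times(m-2)$ matrix vanishes identically. Running the chain of equivalences backwards, $\Omega$ is uniquely completable at rank $m-2$.

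\emph{Main obstacle.} The conceptual content is modest; the care lies in the bookkeeping of the second case --- checking at each deletion that the hypotheses of Propositions~\ref{prp:column-r} (enough columns left and a line with exactly $r$ ones) and~\ref{prp:stitching-excision-unique} (an all-ones row and an all-ones column) are genuinely met, and that the invariant ``exactly two zeros, lying in distinct rows'' survives each step. The one configuration that needs ruling out is that deleting column $j$ might leave both remaining zeros in a common row; this cannot happen, precisely because $\Omega^{(1)}$ inherits the structure of a base (via Lemma~7 of~\cite{Tsakiris-AMS-2023}), and a base of $\mM(m-2,[m]\times[m-1])$ has at least $m-2$ ones in each of its length-$(m-1)$ rows.
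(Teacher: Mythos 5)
Your proof is correct, but it is organized differently from the paper's, which is a three-line reduction: a base of $\mM(m-2,[m]\times[m])$ has exactly four zeros, hence at least $m-4$ all-ones rows and $m-4$ all-ones columns; excising these pairs via Proposition \ref{prp:stitching-excision-unique} (together with Proposition \ref{prp:stitching-excision-bases}, to know the result stays a base) reduces the question directly to a $4\times 4$ base at rank $2$, and Lemma \ref{lem:codim2-4x4} then does \emph{all} of the case analysis. You instead split into cases at the level of general $m$: for the exceptional diagonal-zeros pattern you perform exactly the paper's excision down to $4\times 4$, but for the complementary case (some line with two zeros) you re-run, in general dimension, the column-peeling argument that the paper carries out only inside the proof of Lemma \ref{lem:codim2-4x4} --- peel a column with $r$ ones via Proposition \ref{prp:column-r}, excise an all-ones row and column, peel the two rows with $r$ ones, and land on a fully observed array. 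The bookkeeping you flag (the zeros of $\Omega^{(1)}$ lying in distinct rows because a base of $\mM(m-2,[m]\times[m-1])$ has at least $r$ ones per row; the hypotheses $n-1\ge r$ and the existence of all-ones lines at each step) all checks out, as does your preliminary bound that no line carries three zeros via Lemma \ref{lemma:size_dep}. What the paper's route buys is brevity, since the entire combinatorial discussion is outsourced to the $4\times4$ lemma; what yours buys is that the positive half of the statement is proved self-containedly for all $m$ without passing through the first paragraph of Lemma \ref{lem:codim2-4x4}, at the cost of duplicating that argument.
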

\begin{proof}
First of all we note that any base $\Omega$ has exactly four zeros. Thus there exist $m-4$ rows and columns full of ones. Let $\Omega'$ be the $4 \times 4$ matrix that we obtain by deleting these rows and columns from $\Omega$. Applying $m-4$ times Proposition \ref{prp:stitching-excision-bases} yields that $\Omega$ is uniquely completable at rank $m-2$ if and only if $\Omega'$ is uniquely completable at rank $2$. Now Lemma \ref{lem:codim2-4x4} asserts that $\Omega'$ is uniquely completable at rank $2$ unless no two zeros occur in the same row or column. 
\end{proof}

A generalization of Theorem \ref{prp:codim-2-square} to arbitrary matrix size $m \times n$ is not available at the moment, since it seems to require a case by case treatment, as in Lemma \ref{lem:codim2-4x4}, of patterns that depend on $n$.

\begin{example}\label{ex:not-uniq-comp}
The set $\Omega$ given by the incidence matrix
$$ \begin{bmatrix}
1 & 1 & 1 & 1 & 0 & 0 \\
1 & 1 & 1 & 0 & 1 & 1 \\
1 & 1 & 0 & 1 & 1 & 1 \\
1 & 0 & 1 & 1 & 1 & 1 \\
0 & 1 & 1 & 1 & 1 & 1
\end{bmatrix}$$
 is a base of the matroid $\mM(3,[5] \times [6])$ which is not uniquely completable.
A computation in Macaulay2 shows that the elimination ideal $E_\Omega \cup \{(5,1)\}$ is generated by one polynomial $p$ which has degree 3 in the variable $z_{51}$. The polynomial $p$ is supported in all variables $Z_\Omega \cup \{z_{51}\}$, which makes it a circuit polynomial, provided it's irreducible. If $p$ is not irreducible it would have a factor which is linear in $z_{51}$, and any specialization to $\mathbb{Q}$ would provide $p$ with a rational solution for $z_{51}$. However, the substitutions 
\begin{align*}
z_{11}, \, z_{12}, \, z_{22}, \, z_{26}, \, z_{31}, \, z_{34}, \, z_{35}, \, z_{43}, \, z_{45}, \, z_{53}, \, z_{56} & \mapsto 0 \\
z_{13}, \, z_{14}, \, z_{21}, \,  z_{23}, \, z_{25}, \, z_{32}, \, z_{36}, \, z_{41}, z_{44}, \, z_{46}, \, z_{52}, \, z_{55} & \mapsto 1 \\
z_{54} & \mapsto -1
\end{align*} 
gives $p=z_{51}^3-2z_{51}+2$ which has no rational roots. 
\end{example}

\section{Diagonal and anti-diagonal bases} \label{section: Diagonal Bases}
This final section is devoted to a special class of bases of the determinantal matroid, which we may call \emph{diagonal} or \emph{anti-diagonal} bases. These follow naturally from the well-known algebraic-combinatorial structure of determinantal ideals. 

 On $[m] \times [n]$ we consider a partial order in which $(i,j) \le (\alpha, \beta)$ if and only if $i \le \alpha$ and $j \ge \beta$. Let $\xi_1 \le \xi_2$ be points in $[m] \times [n]$. An \emph{anti-diagonal path} from $\xi_1$ to $\xi_2$ is a maximal chain with respect to this partial order whose minimum is $\xi_1$ and maximum is $\xi_2$. A \emph{diagonal path} is defined analogously by instead using the partial order defined as  $(i,j) \le (\alpha,\beta)$ if $i \le \alpha$ and $j \le \beta$. 
 We say that a family of (anti-)diagonal paths are non-intersecting if the paths are pairwise disjoint.  

\begin{theorem} \label{thm:anti-diagonal}
Any family of non-intersecting anti-diagonal paths from $(1,n)$, $(2,n)$, $\dots$, $(r,n)$ to $(m,1), \, (m,2), \dots, (m,r)$ is a base of the matroid $\mM(r,[m]\times [n])$.
\end{theorem}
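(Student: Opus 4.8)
The plan is to show that a non-intersecting family $\mathcal{P}$ of anti-diagonal paths from $(1,n),\dots,(r,n)$ to $(m,1),\dots,(m,r)$ is a base by verifying two things: that $|\mathcal{P}| = r(m+n-r)$, and that $\Omega = \bigcup \mathcal{P}$ is independent in $\mathcal{M}(r,[m]\times[n])$. The cardinality count should be elementary: an anti-diagonal path from $(i,n)$ to $(m,j)$ is a maximal chain in the given partial order, so its length is $(m-i)+(n-j)+1$; summing over the $r$ paths (with the source indices running over $\{1,\dots,r\}$ and the target indices over $\{1,\dots,r\}$, and using that the paths are pairwise disjoint so there is no overcounting) gives $\sum_{i=1}^r (m-i) + \sum_{j=1}^r (n-j) + r = (rm - \binom{r+1}{2}) + (rn - \binom{r+1}{2}) + r = r(m+n-r)$. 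So $\Omega$ has exactly the right size, and it remains to prove independence, i.e. $E_\Omega = I_{r+1}(Z) \cap \k[Z_\Omega] = 0$, equivalently that $\pi_\Omega$ is dominant.

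For independence I would exploit precisely the ``algebraic-combinatorial structure of determinantal ideals'' the section alludes to — namely the theory of initial ideals. It is classical (Sturmfels, Herzog--Trung, and see \cite{bruns2022determinants}) that for a suitable diagonal (or anti-diagonal) term order $\prec$, the $(r+1)$-minors of $Z$ form a Gröbner basis of $I_{r+1}(Z)$, and the initial ideal $\operatorname{in}_\prec(I_{r+1}(Z))$ is the squarefree monomial ideal generated by the products of the $(r+1)$ entries along the main anti-diagonals of all $(r+1)\times(r+1)$ submatrices. The complement variety of this Stanley--Reisner ideal corresponds to the simplicial complex whose faces are the sets $\Omega$ containing no full anti-diagonal of size $r+1$ — and the facets of maximal size are exactly families of $r$ non-intersecting anti-diagonal ``staircases''. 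The key point is: a set $\Omega$ is independent in the algebraic matroid whenever $Z_\Omega$ is a face of (the independence complex cut out by) $\operatorname{in}_\prec(I_{r+1}(Z))$ of the right dimension, because passing to initial ideals can only shrink elimination ideals in the relevant sense — more precisely, if $\operatorname{in}_\prec(I_{r+1}(Z)) \cap \k[Z_\Omega]$ contains no monomial supported on $Z_\Omega$ then $I_{r+1}(Z) \cap \k[Z_\Omega]$ cannot contain a polynomial whose leading term survives, hence is zero once $|\Omega|$ matches the matroid rank. So the heart of the argument is the combinatorial claim that $\Omega = \bigcup\mathcal{P}$ contains the anti-diagonal of no $(r+1)\times(r+1)$ submatrix.

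To see this combinatorial claim: suppose for contradiction that rows $i_1 < \dots < i_{r+1}$ and columns $j_1 < \dots < j_{r+1}$ are such that all $r+1$ anti-diagonal positions $(i_1,j_{r+1}), (i_2,j_r),\dots,(i_{r+1},j_1)$ lie in $\Omega$. Each such position lies on one of the $r$ paths of $\mathcal{P}$; by pigeonhole two of these anti-diagonal positions, say $(i_a, j_{r+2-a})$ and $(i_b, j_{r+2-b})$ with $a<b$, lie on the same path $P$. But a single anti-diagonal path is a chain in the order $(i,j)\le(\alpha,\beta) \iff i\le\alpha,\ j\ge\beta$, whereas $i_a < i_b$ and $j_{r+2-a} > j_{r+2-b}$ force $(i_a,j_{r+2-a})$ and $(i_b,j_{r+2-b})$ to be comparable in the \emph{opposite} sense — they form an ``anti-chain step'' that a single monotone anti-diagonal path cannot contain two of at incomparable... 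I would phrase it cleanly as: along $P$, as the row index strictly increases the column index is non-increasing, and between two path-points whose rows differ by $\delta$ and whose columns differ by $\delta'$ the path uses $\delta+\delta'$ steps, but the two anti-diagonal points have row gap $\ge b-a$ and column gap $\ge b-a$ while also lying on consecutive-enough path segments — yielding a contradiction with $P$ being a path (a chain) rather than a ``thick'' region. The main obstacle I anticipate is getting this last pigeonhole/chain incompatibility argument airtight, and dually, making rigorous the passage ``$\Omega$ is a facet of the initial complex $\Rightarrow$ $\Omega$ is independent in the algebraic matroid'' — the cleanest route is probably to invoke a known result (e.g. that the independence complex of $\operatorname{in}_\prec(I)$ refines that of $I$, together with the fact that here both have the same rank $r(m+n-r)$) rather than reproving it, and alternatively one may give a direct Gröbner-basis elimination argument as in the proof of Proposition \ref{prp:generically-uniquely-completable-k}.
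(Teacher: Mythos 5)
Your overall strategy is exactly the paper's: pass to an initial ideal of $I_{r+1}(Z)$ via the Gr\"obner basis of $(r+1)$-minors, identify the path families as facets of the resulting Stanley--Reisner complex, and invoke the Kalkbrener--Sturmfels fact that the independence complex of $\operatorname{in}_\prec(I)$ is contained in that of $I$ (your cardinality count $|\Omega|=r(m+n-r)$ is a nice self-contained substitute for citing purity of the complex). However, there is a genuine error in your key combinatorial step: you pair an \emph{anti-diagonal} term order (leading terms = anti-diagonal products) with \emph{anti-diagonal} paths, and the resulting claim --- that $\Omega=\bigcup\mathcal{P}$ contains no full anti-diagonal of an $(r+1)\times(r+1)$ submatrix --- is false. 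Already for $r=1$, $m=n=2$, any anti-diagonal path from $(1,2)$ to $(2,1)$ contains both $(1,2)$ and $(2,1)$, which is precisely the anti-diagonal of the unique $2\times 2$ submatrix; more generally the endpoints of the paths themselves form forbidden anti-diagonal configurations. This is why your pigeonhole argument stalls: you correctly observe that two anti-diagonal positions $(i_a,j_{r+2-a})$, $(i_b,j_{r+2-b})$ with $i_a<i_b$ and $j_{r+2-a}>j_{r+2-b}$ are \emph{comparable} in the order $(i,j)\le(\alpha,\beta)\iff i\le\alpha,\ j\ge\beta$, so they can perfectly well lie on the same path, and no contradiction is available.

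The fix is to swap the pairing, as the paper does: use a \emph{diagonal} term order (e.g.\ lex with $z_{11}\succ\cdots\succ z_{1n}\succ z_{21}\succ\cdots$), for which the leading term of each minor is its \emph{main-diagonal} product, so the forbidden configurations are main diagonals $(i_1,j_1),\dots,(i_{r+1},j_{r+1})$ with both index sequences strictly increasing. Along an anti-diagonal path the column index weakly decreases as the row index increases, so each path is an antichain for the relation ``both coordinates strictly increase''; by pigeonhole a main diagonal of length $r+1$ would place two of its points on one of the $r$ paths, which is impossible. With that correction your argument closes, and the remaining appeal to $\operatorname{in}_\prec(I)\cap\k[Z_\Omega]=0\Rightarrow I\cap\k[Z_\Omega]=0$ (which holds regardless of $|\Omega|$ --- the size is only needed to upgrade independence to basehood) is exactly the Kalkbrener--Sturmfels result the paper cites.
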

\begin{proof}
Consider the lexicographic term order $\succ$ on $k[Z]$ induced by 
$$z_{11} \succ z_{12} \succ \dots \succ z_{1n} \succ z_{21} \succ z_{22} \succ \dots \succ z_{2n} \succ z_{31} \succ \dots \succ z_{m1} \succ z_{m2} \succ \dots \succ z_{mn},$$ already used in the proof of Lemma \ref{lemma:small_block}. This is known as a diagonal term order, as it has the property that the leading term of any minor of $Z$ is the term associated to the product of the variables along the main diagonal of the minor. By Sturmfels \cite{sturmfels1990grobner} (see also Theorem 4.3.2 in \cite{bruns2022determinants}) the $(r+1)$-minors form a Gr\"obner basis of $I_{r+1}(Z)$ for this diagonal order. 

The independence complex $\Delta(\operatorname{in}_{\succ}(I_{r+1}(Z)))$ of $\operatorname{in}_{\succ}(I_{r+1}(Z))$ is by definition the set of $Z_\Omega$'s that are algebraically independent in the quotient ring $k[Z] / \operatorname{in}_{\succ}(I_{r+1}(Z))$. As $\operatorname{in}_{\succ}(I_{r+1}(Z))$ is a square-free monomial ideal, we have that $Z_\Omega \in \Delta(\operatorname{in}_{\succ}(I_{r+1}(Z)))$ if and only if $\prod_{(i,j) \in \Omega} z_{ij} \not\in \operatorname{in}_{\succ}(I_{r+1}(Z))$. In particular, $\Delta(\operatorname{in}_{\succ}(I_{r+1}(Z)))$ is the simplicial complex associated to the Stanley-Reisner ring $k[Z] / \operatorname{in}_{\succ}(I_{r+1}(Z))$. This simplicial complex is well understood: it is pure of dimension $\dim k[Z] / \operatorname{in}_{\succ}(I_{r+1}(Z))-1=r(m+n-r)-1$ and its facets are precisely the families of non-intersecting anti-diagonal paths of the statement; see for instance Proposition 4.4.1 in \cite{bruns2022determinants}. We conclude that every $\Omega$ given by non-intersecting anti-diagonal paths as in the statement is algebraically independent modulo $\operatorname{in}_{\succ}(I_{r+1}(Z))$ and of size $r(m+n-r)$. 

By Kalkbrener \& Sturmfels \cite{kalkbrener1995initial} the independence complex of $I_{r+1}(Z)$ is the union of the independence complexes of all initial ideals of $I_{r+1}(Z)$ across all term orders. It follows that $\Delta(\operatorname{in}_{\succ}(I_{r+1}(Z)))$ is a subcomplex of $\Delta(I_{r+1}(Z))$, and so the said $\Omega$ remains algebraically independent modulo $I_{r+1}(Z)$, and in fact it is a base. 
\end{proof}

\begin{example}
The set $\Omega$ in Example \ref{ex:not-1rm-SLMF} with $n=m=9$ and $r=4$ consists of four non-intersecting anti-diagonal paths, and is a base by Theorem \ref{thm:anti-diagonal}. The figure below illustrates the four paths. 

\begin{figure}[h]
\begin{tikzpicture}[scale=0.5]
 \foreach \i in {0,...,8}
    \foreach \j in {0,...,8}
        \fill (\i,\j) circle (3pt);

\draw[very thick] (0,0) -- (0,5);
\draw[very thick] (0,5)--(1,5)--(1,6)--(2,6)--(2,7)--(3,7)--(3,8)--(8,8);
\draw[very thick] (1,0)--(1,3)--(2,3)--(2,4)--(4,4)--(4,6)--(5,6)--(5,7)--(8,7);
\draw[very thick] (2,0)--(2,1)--(3,1)--(3,2)--(5,2)--(5,3)--(6,3)--(6,5)--(7,5)--(7,6)--(8,6);
\draw[very thick] (3,0)--(7,0)--(7,1)--(8,1)--(8,5);
\end{tikzpicture}
\end{figure}

\end{example}

\begin{remark} \label{prp:diagonal}
Applying the permutation $j \mapsto n-j+1$ to the columns, the ``anti-diagonal bases'' in Theorem \ref{thm:anti-diagonal} become ``diagonal bases''. Thus if $\Omega$ is a family of non-intersecting diagonal paths from $(1,1)$, $(2,1)$, $\dots$, $(r,1)$ to $(m,n-r+1), \, (m,n-r+2), \dots, (m,n)$, then $\Omega$ is a base of the matroid $\mM(r,[m]\times [n])$.
\end{remark}

\begin{question}[Aldo Conca] \label{que:Aldo}
What is the degree of the projection map $\pi_\Omega$ for each the bases of Theorem \ref{thm:anti-diagonal}? In particular, which of these bases are uniquely completable?
\end{question}

Using any of the two partial orders on $[m] \times [n]$ introduced above, we recall that a subset $\Omega \subseteq [m] \times [n]$ is a \emph{ladder} if $P, P' \in \Omega$ and $P' \le P'' \le P$ implies $P'' \in \Omega$. The sets $\Omega_1$ and $\Omega_2$ in Example \ref{ex:anti-diagonal-bases} below are examples of ladders. With this, we give a partial answer to Question \ref{que:Aldo}.

\begin{proposition} \label{prp:diagonal-ladders}
Let $\Omega$ be a family of non-intersecting anti-diagonal paths from $(1,n)$, $(2,n)$, $\dots$, $(r,n)$ to $(m,1), \, (m,2), \dots, (m,r)$. Suppose that $\Omega$ is a ladder. Then $\Omega$ is a uniquely completable base of the matroid $\mM(r,[m]\times [n])$.
\end{proposition}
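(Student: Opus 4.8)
The plan is to combine Theorem~\ref{thm:anti-diagonal}, which already gives that $\Omega$ is a base, with the sufficient criterion of Proposition~\ref{prop:our-game}; so everything reduces to exhibiting a total order $\prec$ on $[m]\times[n]$ that places all of $[m]\times[n]\setminus\Omega$ above all of $\Omega$ and for which every $(i,j)\notin\Omega$ lies in the support of an $(r+1)$-minor whose other entries are all $\prec(i,j)$.

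First I would pin down the shape of $\Omega$. By the standard non-crossing lattice-path argument the $r$ paths must join $(k,n)$ to $(m,k)$ for $k=1,\dots,r$; call them $P_1,\dots,P_r$, with $P_1$ the path through the two ends $(1,n)$ and $(m,1)$ of the anti-diagonal order and $P_r$ the innermost path. The ladder hypothesis enters here: order-convexity for the diagonal order $\le$ (with $(i,j)\le(\alpha,\beta)$ iff $i\le\alpha$, $j\le\beta$) forbids ``holes'' in the band, because a cell lying strictly between $P_1$ and $P_r$ in its own row is diagonally sandwiched between two cells of $\Omega$ in that row. Hence $[m]\times[n]\setminus\Omega=D\sqcup U$, where $D$ (everything strictly above-left of $P_1$) is an order ideal and $U$ (everything strictly below-right of $P_r$) an order filter for $\le$. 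Two numerical facts about this splitting are needed. \textbf{(1)} Every $(i,j)\in U$ has $i,j\ge r+1$ and every $(i,j)\in D$ has $i\le m-r$, $j\le n-r$; this follows by tracking in which rows the paths $P_1,\dots,P_r$ enter columns $1,\dots,r$ (those rows are strictly increasing by disjointness, so $P_1$ reaches column $1$ by row $m-r+1$ at the latest), and transposing for the symmetric bound. \textbf{(2)} If $(i,j)\in U$ then the $(r+1)\times(r+1)$ window $\{i-r,\dots,i\}\times\{j-r,\dots,j\}$ contains no cell of $D$. I would prove~(2) by contradiction: if the two extreme corners of that window lay outside $\Omega$, then inside the window each $P_k$ restricts to a down-left lattice path separating those corners; since every step of such a path raises the window's anti-diagonal level by one and a corner-separating path must span the middle level --- which consists exactly of the $r+1$ main-diagonal cells of the window --- each restricted path uses exactly one of the $r-1$ \emph{interior} diagonal cells, and disjointness of the $r$ paths then contradicts the pigeonhole principle.

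With this structure in hand I would take $\prec$ to list all of $\Omega$ first (in any order), then all of $U$ along a linear extension of $\le$ (smaller first), then all of $D$ along a linear extension of $\ge$ (larger first). Property~(a) of Proposition~\ref{prop:our-game} is automatic. For~(b): if $(i,j)\in U$, use the window $\{i-r,\dots,i\}\times\{j-r,\dots,j\}$ (allowed by~(1)); its other $(r+1)^2-1$ entries lie in $\{1,\dots,i\}\times\{1,\dots,j\}$, hence are $\le(i,j)$, $\ne(i,j)$, and by~(2) not in $D$, so each is in $\Omega$ or is an element of $U$ that precedes $(i,j)$. If $(i,j)\in D$, use the window $\{i,\dots,i+r\}\times\{j,\dots,j+r\}$ (allowed by~(1)); its other entries are $\ge(i,j)$, $\ne(i,j)$, hence lie in $\Omega$, in $U$, or in $D$ and $>(i,j)$, in all cases preceding $(i,j)$. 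So every $(i,j)\notin\Omega$ is covered by the corresponding $(r+1)$-minor, Proposition~\ref{prop:our-game} applies, and $\Omega$ is uniquely completable; it is a base by Theorem~\ref{thm:anti-diagonal}.

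The only non-formal part is the combinatorial analysis of the staircase regions $D$ and $U$: the no-holes step (the one place the ladder hypothesis is really used), the boundary estimate~(1), and above all the level-counting argument~(2) keeping a $U$-cell's window away from $D$. I expect~(2) to be the part that needs the most care to write cleanly, since it is where one must exploit both that the $r$ paths are non-crossing and that each is a genuine down-left lattice path rather than an abstract separating set.
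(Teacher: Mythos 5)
Your proof is correct and follows essentially the same route as the paper's: both apply Proposition \ref{prop:our-game} with a total order that puts all of $\Omega$ first and then orders the two complementary staircase regions by linear extensions of the componentwise order and of its reverse. The paper states this in one short paragraph and leaves the verification of the minor condition implicit; your window and anti-diagonal level-counting analysis supplies exactly the combinatorial details needed to check that hypothesis.
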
  
\begin{proof}
The claim follows by applying Proposition \ref{prop:our-game}, after endowing $[m] \times [n]$ with any total order that extends the following partial order. First we declare every element of $\Omega$ to be less than any element outside $\Omega$. Then we totally order the elements below the ladder from smallest to largest, going from left to right and from top to bottom; i.e. if $(\alpha,\beta)$ is the first lower inside corner of the ladder (e.g. see Fig. 3 in \cite{conca1995ladder} for the definition), then $(\alpha+1,\beta+1)$ is the smallest element below the ladder, $(\alpha+1,\beta+2)$ is the second smallest element and so on. Similarly, we totally order the elements above the ladder from smallest to largest, going from right to left and from bottom to top. 
\end{proof}

\begin{example} \label{ex:anti-diagonal-bases}
Let $m=4, \, n=4$ and $r=2$, and consider the four sets
\begin{align*}
\Omega_1 = \begin{pmatrix}
\underline{1} & \underline{1} & \underline{1} & \underline{1} \\
\underline{1} & 1 & 1 & 1 \\
\underline{1} & 1 & 0 & 0 \\ 
\underline{1} & 1 & 0 & 0 \\
\end{pmatrix}, \,
\Omega_2 = \begin{pmatrix}
0 & \underline{1} & \underline{1} & \underline{1} \\
\underline{1} & \underline{1} & 1 & 1 \\
\underline{1} & 1 & 1 & 0 \\ 
\underline{1} & 1 & 0 & 0 \\
\end{pmatrix}, \,
\Omega_3 = \begin{pmatrix}
0 & \underline{1} & \underline{1} & \underline{1} \\
0 & \underline{1} & 0 & 1 \\
\underline{1} & \underline{1} & 0 & 1 \\ 
\underline{1} & 1 & 1 & 1 \\
\end{pmatrix}, \,
\Omega_4 = \begin{pmatrix}
0 & \underline{1} & \underline{1} & \underline{1} \\
\underline{1} & \underline{1} & 0 & 1 \\
\underline{1} & 0 & 1 & 1 \\
\underline{1} & 1 & 1 & 0
\end{pmatrix}
\end{align*} Each $\Omega_i$ consists of $r=2$ anti-diagonal paths; here we have underlined the $1$'s of one of the paths to distinguish it from the other. As seen, the two paths do not intersect, and thus, according to Theorem \ref{thm:anti-diagonal}, they are bases of the matroid $\mathcal{M}(2,[4] \times [4])$. It follows from Lemma \ref{lem:codim2-4x4} that $\Omega_1, \, \Omega_2, \, \Omega_3$ are uniquely completable, while $\Omega_4$ is not. 
That $\Omega_1$ and $\Omega_2$ are uniquely completable also follows from Proposition \ref{prp:diagonal-ladders}, as they are ladders.
The set $\Omega_3$ is not a ladder under any permutation of the rows and columns. 
This shows that not all uniquely completable anti-diagonal bases are of the form of Proposition \ref{prp:diagonal-ladders}.
\end{example}

It is an intriguing issue to understand the rest of the uniquely completable bases in this family, as well as the interplay between the degree of the projection map $\pi_\Omega$ for the bases that are not uniquely completable, and their finer combinatorial structure. 

\bibliographystyle{amsalpha}
\bibliography{Nicklasson-Tsakiris-2025}

\end{document}